\providecommand{\U}[1]{\protect\rule{.1in}{.1in}}
\newtheorem{theorem}{Theorem}
\theoremstyle{plain}
\newtheorem{corollary}{Corollary}
\newtheorem{definition}{Definition}
\newtheorem{example}{Example}
\newtheorem{lemma}{Lemma}
\newtheorem{proposition}{Proposition}
\numberwithin{equation}{section}
\begin{document}
\title[On $S$-$1$-absorbing primary submodules]{On $S$-$1$-absorbing primary submodules}
\author{Mohammed Issoual}
\address{CRMEF Meknes, Morocco.} 
\email{issoual2@yahoo.fr}
\author{Najib Mahdou}
\address{Laboratory of Modeling and Mathematical Structures \\Department of Mathematics, Faculty of Science and Technology of Fez, Box 2202,
	University S.M. Ben Abdellah Fez, Morocco.}
\email{mahdou@hotmail.com}
\author{Neslihan Aysen Ozkirisci}
\address{Department of Mathematics, Yildiz Technical University, Davutpasa Campus, Esenler, 34210, Istanbul, Turkey}
\email{aozk@yildiz.edu.tr}
\author{Ece Yetkin Celikel}
\address{Department of Electrical-Electronics Engineering, Faculty of Engineering,
	Hasan Kalyoncu University, Gaziantep, Turkey}
\email{ece.celikel@hku.edu.tr, yetkinece@gmail.com}

\thanks{This paper is in final form and no version of it will be submitted for
publication elsewhere.}
\subjclass[2000]{ Primary 13A15, 13A99, 13C13}
\keywords{$S$-$1$-absorbing primary submodule, $1$-absorbing primary submodule, $S$%
-$2$-absorbing primary submodule, $S$-primary ideal.}

\begin{abstract}
In this work, we introduce the notion of $S $-1-absorbing primary submodule as an extension of 1-absorbing primary submodule. Let $S $ be a multiplicatively closed subset of a ring $R $ and $M $ be an $R $-module. A submodule $N $ of $M $ with $(N:_{R}M)\cap S=\emptyset$ is said to be $S $-1-absorbing primary if whenever $abm\in N $ for some non-unit $a,b\in R $ and $m\in M $, then either $sab\in(N:_{R}M)$ or $sm\in M$-$rad(N)$. We examine several properties of this concept and provide some characterizations. In addition, $S $-1-absorbing primary avoidance theorem and $S $-1-absorbing primary property for idealization and amalgamation are presented.

\end{abstract}
\maketitle

\section{Introduction}\label{sec1}

In this article, we assume all rings are commutative with non-zero identity
and all modules are unital. We will start with certain definitions and
notations that will be used throughout the paper. Let $I$ be an ideal of a
ring $R$ and $K,L$ be two submodules of $R$-module $M.$ The residual $K$ by
$L$ and $I$ is defined as $(K:_{R}L)=\{r\in R\mid rL\subseteq K\}$ and
$(K:_{M}I)=\{m\in M\mid Im\subseteq K\},$ respectively. If $I$ is a principal
ideal of $R$ generated by an element $r\in R,$ then $(K:_{M}r)$ is preferred
instead of $(K:_{M}Rr).$ By $M$-$rad(N)$, we denote the $M$-radical of $N$
which is the intersection of all prime submodules containing the submodule
$N$. 

Consider a multiplicatively closed subset $S$ of a ring $R$. Recently,
$S$-versions and their features of prime and primary ideals have been
thoroughly researched in various studies. Moreover, absorbing-types of such
ideals have been proposed and investigated. From the literature, we have
gathered the following definitions that we are interested in:

\begin{enumerate}
	\item A submodule $N$ of $M$ with $(N:_{R}M)\cap S=\emptyset$ is said to be
	$S$-1-absorbing prime, if there exists a fixed element $s\in S$ whenever
	$abm\in N$ for some non-unit elements $a,b\in R$ and $m\in M$, then
	$sab\in(N:_{R}M)$ or $sm\in N$, \cite{Far}.
	
	\item A proper ideal $I$ of $R$ is called $1$-absorbing primary if whenever
	$abc\in I$ for some non-unit $a,b,c\in R$, then either $ab\in I$ or $c\in
	\sqrt{I}$, \cite{ece1}.
	
	\item A proper submodule $N$ of $M$ is called $1$-absorbing primary if
	whenever $abm\in N$ for some non-unit $a,b\in R$ and $m\in M$, then either
	$ab\in(N:_{R}M)$ or $m\in M$-$rad(N)$, \cite{Ece}.
	
	\item A submodule $N$ of $M$ with $(N:_{R}M)\cap S=\emptyset$ is said to be
	$S$-primary, if there exists a fixed element $s\in S$ such that whenever
	$am\in N$ for some $a\in R$ and $m\in M$, then $sa\in\sqrt{(N:_{R}M)}$ or
	$sm\in N$, \cite{Ans}.
	
	\item A submodule $N$ of $M$ with $(N:_{R}M)\cap S=\emptyset$ is said to be
	$S$-2-absorbing primary, if there exists a fixed element $s\in S$ such that
	whenever $abm\in N$ for some $a,b\in R$ and $m\in M$, then $sam\in N$ or
	$sbm\in N$ or $sab\in\sqrt{(N:_{R}M)}$, \cite{Naji}.
\end{enumerate}

Our purpose in this paper is to extend the notion of $S$-1-absorbing primary
ideals and introduce the concept of $S$-$1$-absorbing primary submodules. This
study is composed of five sections; in the second section, the concept of
$S$-$1$--absorbing primary submodule is defined and examined in detail.
Several characterizations of $S$-$1$--absorbing primary submodules are also
presented. We exemplify the differences between $S$-$1$--absorbing primary and
$S$-$1$--absorbing prime submodules, as well as $1$--absorbing primary
submodules. We provide some theorems to examine the relations between $S$%
-$1$--absorbing primary submodules and the other notions such as $S$-primary
and $1$--absorbing primary submodules.

In \cite{Lu}, it has been proposed and investigated the prime avoidance
theorem for modules. This theorem says that if a submodule $N $ of $M $ is
contained in a union of finitely many prime submodules $P_{i} $'s with
$(P_{j}:M)\nsubseteq(P_{k}:M) $ whenever $j\neq k $, then $N $ is contained in
$P_{i} $ for some $i $. This theorem has also been extended to primary
submodules in various ways \cite{AtTe, DS}. In the third section, we
provide $S$-$1$--absorbing primary avoidance theorem under the assumption that
$M $ is finitely generated multiplication.

In section 4, we examine the $S$-$1$--absorbing primary property for
idealization of $M $ in $R $. The idealization of $M $ in $R $ is the ring
$R(+)M = \{(r,m) \mid r\in R, m\in M\}$ with component-wise addition and
multiplication defined as $(r_{1},m_{1})(r_{2},m_{2})=(r_{1}r_{2},r_{1}%
m_{2}+r_{2}m_{1})$. For more information and the ideal structure of an
idealization of $M $, the reader may see \cite{ide}.

Let $(A,B)$ be a pair of commutative rings with unity, $f:A\rightarrow B$ be a
ring homomorphism and $J$ be an ideal of $B$. In this setting, we can consider
the following subring of $A\times B:$
\[
A\bowtie^{f}J:=\left\{ (a,f(a)+j)\mid a\in A, j\in J\right\}
\]
called the amalgamation of $A$ with $B$ along $J$ with respect to $f$. This
construction is defined and examined by D'Anna et al. \cite{DF} and then
extensively investigated in different papers \cite{MCM, MES, KMSN}. In the
last section, we study $S $-1-absorbing primary ideals of the amalgamation
$A\bowtie^{f}J.$

\section{Characterization of $S$-1-absorbing primary submodules}\label{sec2}

\begin{definition}
	Let $S$ be a multiplicatively closed subset of a ring $R$ and $M$ an
	$R$-module. We call a submodule $N$ of $M$ with $(N:_{R}M)\cap S=\emptyset$ an
	$S$-$1$--absorbing primary if there exists an (fixed) $s\in S$ such that for
	all non-unit $a,b\in R$ and $m\in M$ if $abm\in N$, then either $sab\in
	(N:_{R}M)$ or $sm\in M$-$rad(N)$. This fixed element $s\in S$ is called an
	$S$-element of $N.$
\end{definition}

In particular, if one take $M=R$ as an $R$-module, then we have the definition
of $S$-$1$--absorbing primary ideal of $R$ as following:

\begin{definition}
	Let $S$ be a multiplicatively closed subset of a ring $R$. We call a proper
	ideal $I$ of $R$ with $I\cap S=\emptyset$ an $S$-$1$--absorbing primary if
	there exists an (fixed) $s\in S$ such that whenever $abc\in I$ for some
	non-unit $a,b,c\in R$, then either $sab\in I$ or $sc\in\sqrt{I}$. This fixed
	element $s\in S$ is called an $S$-element of $I.$
\end{definition}

Let $N$ be a proper submodule of $M$. It is clear that any $S$-1-absorbing
prime submodule is a $S$-1-absorbing primary submodule and that two concepts
coincide if $M$-$rad(N)=N$. However, this implication is not reversible in
general, see the following example.

\begin{example}
	\label{e1}\textbf{(}$S$\textbf{-1-absorbing primary submodule that is not }%
	$S$\textbf{-1-absorbing prime)} Consider the $%
	\mathbb{Z}
	$-module $M=%
	\mathbb{Z}
	\times%
	\mathbb{Z}
	$ and the submodule $N=3%
	\mathbb{Z}
	\times8%
	\mathbb{Z}
	$ of $M$. For the multiplicatively closed subset $S=\{3^{n}:n>0\}$, $N$ is an
	$S$-1-absorbing primary submodule which is not $S$-1-absorbing prime.
\end{example}

\begin{proof}
	Suppose that $ab(m_{1},m_{2})\in N$ for some integers $m_{1},m_{2}$ and
	$a,b\neq\pm1$. If $m_{2}$ is even, then $sm\in3%
	\mathbb{Z}
	\times2%
	\mathbb{Z}
	=M$-$rad(N)$ for all $s\in S$. If $m_{2}$ is odd, then since $abm_{2}\in8%
	\mathbb{Z}
	$, we conclude that $ab\in8%
	\mathbb{Z}
	$, and so $sab\in24%
	\mathbb{Z}
	=(N:_{R}M)$ for all $s\in S.$ Thus $N$ is an $S$-1-absorbing primary
	submodule. As $2\cdot3\cdot(1,4)\in N$ but neither $6s\in(N:_{R}M)=24%
	\mathbb{Z}
	$ nor $s(1,4)\in N$ for any $s\in S$, $N$ is not $S$-1-absorbing prime.
\end{proof}

Let $S$ be a multiplicatively closed subset of a ring $R.$ In the following
proposition, by $S^{\ast},$ we denote the saturation set of $S$ defined by
$\{r\in R:\frac{r}{1}\in U(S^{-1}R)\}.$ It is clear that $S\subseteq S^{\ast}$
and $S^{\ast}$ is also a multiplicatively closed subset of $R$. In case of
$S=S^{\ast}$, then $S$ is said to be a saturated set, \cite{Gilmer}. For an
$R$-module $M,$ by $U_{M}(R)$, we denote the set $\{r\in R:rM=M\}$ which
contains $U(R)$ is a saturated multiplicatively closed subset of $R$.

\begin{proposition}
	\label{p1}Let $S$ be a multiplicatively closed subset of a ring $R$, and $N$
	be a submodule of an $R$-module $M$.
\end{proposition}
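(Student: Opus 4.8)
The plan is to dispatch each assertion of the proposition in turn, as each is of a standard ``transfer'' type for $S$-versions of submodule properties, and the paragraph preceding the statement has just introduced exactly the gadgets ($S^{\ast}$ and $U_{M}(R)$) that such assertions involve. For the implication that a $1$-absorbing primary submodule $N$ with $(N:_{R}M)\cap S=\emptyset$ is $S$-$1$-absorbing primary, I would simply take $s=1$ as an $S$-element: if $abm\in N$ with $a,b$ non-units, then $ab\in(N:_{R}M)$ or $m\in M$-$rad(N)$ by hypothesis, and $1\in S$. For the converse direction under an assumption such as $S\subseteq U_{M}(R)$, I would argue that the two notions collapse: if $s\in U_{M}(R)$ then $abM=ab(sM)=(sab)M$, so $sab\in(N:_{R}M)$ already forces $ab\in(N:_{R}M)$; and writing $M$-$rad(N)$ as the intersection of the prime submodules $P_{i}\supseteq N$, each $P_{i}$ is proper, so $s\notin(P_{i}:_{R}M)$, whence $sm\in P_{i}$ implies $m\in P_{i}$ by primeness, and intersecting gives $m\in M$-$rad(N)$.

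For the saturation equivalence --- $N$ is $S$-$1$-absorbing primary if and only if it is $S^{\ast}$-$1$-absorbing primary --- I would first record the elementary description $S^{\ast}=\{t\in R:rt\in S\text{ for some }r\in R\}$, obtained by unwinding ``$t/1\in U(S^{-1}R)$''. One direction is immediate from $S\subseteq S^{\ast}$, noting also that $(N:_{R}M)\cap S=\emptyset$ and $(N:_{R}M)\cap S^{\ast}=\emptyset$ are equivalent (a $t\in(N:_{R}M)\cap S^{\ast}$ would yield $rt\in(N:_{R}M)\cap S$, since $(N:_{R}M)$ is an ideal). For the other direction, if $t$ is an $S^{\ast}$-element of $N$ and $rt\in S$, I claim $rt$ is an $S$-element: given $abm\in N$ with $a,b$ non-units, $tab\in(N:_{R}M)$ or $tm\in M$-$rad(N)$, and multiplying through by $r$, using that both $(N:_{R}M)$ and $M$-$rad(N)$ are closed under multiplication by elements of $R$, gives $rt\cdot ab\in(N:_{R}M)$ or $rt\cdot m\in M$-$rad(N)$.

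If the proposition also asserts that $(N:_{R}M)$ is an $S$-$1$-absorbing primary ideal of $R$, or that $S^{-1}N$ is $1$-absorbing primary in $S^{-1}M$, I would handle these with the usual recipe: push a relation $abm\in N$ (respectively $abc\in(N:_{R}M)$, applied to every $m\in M$) through the property of $N$, then either land in $(N:_{R}M)$ and be done, or deduce that $scM\subseteq M$-$rad(N)$, and pass to $S^{-1}R$, clearing the finitely many resulting denominators into a single element of $S$. Here I would invoke $(S^{-1}N:_{S^{-1}R}S^{-1}M)=S^{-1}(N:_{R}M)$ and the compatibility of the $M$-radical with localization.

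The main obstacle I anticipate is precisely this control of the $M$-radical: the clean facts I want --- that $(M$-$rad(N):_{R}M)\subseteq\sqrt{(N:_{R}M)}$ and that $S^{-1}(M$-$rad(N))=(S^{-1}M)$-$rad(S^{-1}N)$ --- hold comfortably when $M$ is finitely generated or a multiplication module but may fail otherwise, so those clauses will quietly rely on such hypotheses, or on choosing the $S$-element \emph{before} any passage through radicals. Away from that point, every part reduces to a short chase once the right absorbing element $rt$ (or the right common denominator $s$) has been named.
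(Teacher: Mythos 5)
Your proposal is essentially correct and, on the parts it reconstructs, follows the same route as the paper: item (2) is handled by taking $s=1$ (the paper phrases this as the case $T=\{1\}$ of its item (1), which it declares clear), and for item (3) your unwinding of $\frac{t}{1}\in U(S^{-1}R)$ into ``$rt\in S$ for some $r\in R$'' followed by multiplying the $S^{\ast}$-element's conclusion through by $r$ is exactly the paper's argument, including the remark that $(N:_{R}M)\cap S=\emptyset$ and $(N:_{R}M)\cap S^{\ast}=\emptyset$ are equivalent. The one genuine divergence is item (4), where the actual hypothesis is that $M=Rm$ is cyclic and $S\subseteq U_{M}(R)$: the paper disposes of the radical case by noting $srRm=rM\subseteq M$-$rad(N)$, which leans on the cyclic structure, whereas you argue that $s\notin(P:_{R}M)$ for every proper prime submodule $P\supseteq N$ (since $sM=M$) and invoke primeness of each $P$ to cancel $s$ before intersecting. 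Your version is cleaner and does not use cyclicity at all, and your colon-ideal case ($abM=sabM\subseteq N$) actually lands in $(N:_{R}M)$ as required, whereas the paper's displayed chain only reaches $M$-$rad(N)$ — a slip, since the intended conclusion $ab\in(N:_{R}M)$ follows immediately from $abM=sabM\subseteq N$. You do not address the general $T\subseteq S$ comparison of item (1) (in particular its converse under the condition that each $s\in S$ admits $s'$ with $ss'\in T$), but that is the same multiplication trick you already use for the saturation, and the paper itself leaves it as ``clear''; your speculative extra clauses about $(N:_{R}M)$ and $S^{-1}N$ belong to later results in the paper, not to this proposition.
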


\begin{enumerate}
	\item Let $T\subseteq S$ be two multiplicatively closed subsets of $R.$ If $N$
	is an $T$-1-absorbing primary submodule such that $(N:_{R}M)\cap S=\emptyset$,
	then $N$ is also an $S$-1-absorbing primary submodule. Conversely, if for each
	$s\in S$, there is an element $s^{\prime}\in T$ such that $ss^{\prime}\in T$
	and $N$ is a $S$-1-absorbing primary submodule of $M$, then $N$ is an
	$T$-1-absorbing primary submodule of $M$.
	
	\item Every $1$-absorbing primary submodule $N$ with $(N:_{R}M)\cap
	S=\emptyset$, is $S$-1-absorbing primary. However, it is clear that these two
	classes of $1$-absorbing primary and $S$-1-absorbing primary submodules
	coincide if $S\subseteq U(R).$
	
	\item $N$ is an $S$-1-absorbing primary\ submodule of $M$ if and only if $N$
	is $S^{\star}$-1-absorbing primary.
	
	\item Suppose that $M=Rm$ is a cyclic $R$-module and $S\subseteq U_{M}(R).$
	Then $N$ is a 1-absorbing primary submodule if and only if $N$ is an
	$S$-1-absorbing primary submodule.
\end{enumerate}

\begin{proof}
	(1) It is clear.
	
	(2) The claim follows by taking $T=\{1\}$ in (1).
	
	(3) Suppose that $N$ is $S$-1-absorbing primary submodule. It is clear that
	$(N:_{R}M)\cap S^{\ast}=\emptyset$. Since $S\subseteq S^{\star}$, then by (1),
	$N$ is $S^{\star}$-1-absorbing primary. For the converse, let $abm\in N$ for
	some non-unit elements $a,b\in R$ and $m\in M$. Since $N$ is an $S^{\star}%
	$-1-absorbing primary submodule, there exists an $s\in S^{\star}$ such that
	$sab\in(N:_{R}M)$ or $sm\in M$-$rad(N)$. Since $\frac{s}{1}$ is a unit of
	$S^{-1}R$, $sur\in S$ for some $u\in S$ and $r\in R$. Say $t:=sur\in S.$
	Hence, we conclude $tm\in M$-$rad(N)$ or $tab\in(N:_{R}M)$, as needed.
	
	(4) Suppose that $N$ is an $S$-1-absorbing primary submodule. Let
	$abm^{\prime}\in N$ for some non-unit $a,b\in R$ and $m^{\prime}=rm\in M$.
	Then there exists an $s\in S$ such that $sab\in(N:_{R}M)$ or $srm\in
	M$-$rad(N).$ If $srm\in M$-$rad(N)$, then since $S\subseteq U_{M}(R)$ we have
	$srRm=rM\subseteq M$-$rad(N)$ which implies $m^{\prime}=rm\in M$-$rad(N)$. If
	$sab\in(N:_{R}M)$, then since $M$ is a multiplication module and $S\subseteq
	U_{M}(R),$ we conclude $abM=sabM\subseteq(N:_{R}M)M\subseteq M$-$rad(N).$ Thus,
	$N$ is a 1-absorbing primary submodule. The converse part follows from (2).
\end{proof}

\begin{example}
	\label{e2}\textbf{(}$S$\textbf{-1-absorbing primary submodule that is not
		1-absorbing primary)} Consider the multiplicatively closed subset
	$S=\{2^{k}3^{t}:$ $k,t\geq0\}\ $of $%
	\mathbb{Z}
	.$ Then the $%
	\mathbb{Z}
	$-submodule $N=(0)$ of $M=%
	\mathbb{Q}
	\times%
	\mathbb{Z}
	_{2}\times%
	\mathbb{Z}
	_{3}$ is an $S$-1-absorbing primary submodule which is not 1-absorbing primary.
\end{example}

\begin{proof}
	Suppose that $ab\left(  \frac{c}{d},\overline{x},\overline{y}\right)  \in N$
	for some $\pm1\neq a,b\in%
	\mathbb{Z}
	$ and $\left(  \frac{c}{d},\overline{x},\overline{y}\right)  \in M$. Consider
	the element $6\in S.$ Here, $abc=0$ which implies $a=0$ or $b=0$ or $c=0$. If
	$a=0$ or $b=0$, then clearly $sab\in(N:_{%
		\mathbb{Z}
	}M)$. If $c=0,$ then $6\left(  \frac{c}{d},\overline{x},\overline{y}\right)
	=(0,\overline{0},\overline{0})\in M$-$rad(N)=(0)$, and thus $N$ is an
	$S$-1-absorbing primary submodule. However, it is not $1$-absorbing primary as
	$2\cdot3\cdot(0,\overline{1},\overline{1})\in N$ but neither $2\cdot3\in(N:_{%
		\mathbb{Z}
	}M)=0$ nor $(0,\overline{1},\overline{1})\in M$-$rad(N)=(0)$.
\end{proof}

\begin{theorem}
	\label{char}Let $S$ be a multiplicatively closed subset of a ring $R.$ For a
	submodule $N$ of an $R$-module $M$ satisfying $(N:_{R}M)\cap S=\emptyset$, the
	following are equivalent.
\end{theorem}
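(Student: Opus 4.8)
The plan is to establish the equivalences by a short cycle of implications, the substance of which is to rewrite the element-wise absorbing condition as a statement about the residual submodules $(N:_{M}ab)$ and, more generally, about how $N$ reacts to products $abL$ for a submodule $L\subseteq M$ (or $IabM$ for an ideal $I$ of $R$). Throughout, the element $s\in S$ produced in one condition is the one reused in the next, so the ``fixed $S$-element'' is carried along unchanged; the only real thing to monitor is that a single $s$ serves all the reformulations simultaneously.

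I would start with \textbf{(1)} $\Rightarrow$ the residual form. Assume $N$ is $S$-$1$-absorbing primary with $S$-element $s$ and fix non-units $a,b\in R$ with $sab\notin(N:_{R}M)$. For any $m\in(N:_{M}ab)$ one has $abm\in N$, so the definition forces $sab\in(N:_{R}M)$ or $sm\in M$-$rad(N)$; the first is excluded, hence $sm\in M$-$rad(N)$, and since $m$ was arbitrary, $s(N:_{M}ab)\subseteq M$-$rad(N)$. The converse is immediate: given $abm\in N$ with $a,b$ non-units, either $sab\in(N:_{R}M)$ (done), or $sab\notin(N:_{R}M)$, whence $m\in(N:_{M}ab)$ yields $sm\in M$-$rad(N)$.

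Next I would pass to the submodule/ideal formulation. If $abL\subseteq N$ for a submodule $L$ and non-units $a,b$, then $L\subseteq(N:_{M}ab)$, so either $sab\in(N:_{R}M)$ or $sL\subseteq s(N:_{M}ab)\subseteq M$-$rad(N)$; here one uses that $M$-$rad(N)$ is a submodule, so $sL\subseteq M$-$rad(N)$ follows from $s\ell\in M$-$rad(N)$ on a generating set. Specializing $L=Rm$ returns to the element-wise statement and closes the loop. A variant with an ideal $I$ satisfying $IabM\subseteq N$ (for non-units $a,b$) is treated identically via $IM\subseteq(N:_{M}ab)$, with $I=Rc$ ($c$ non-unit) recovering the ideal-theoretic definition; and if the list includes a clause relating $N$ to the ideal $(N:_{R}M)$, the passage between them is the standard one for multiplication-type situations, using $abM\subseteq N\Leftrightarrow ab\in(N:_{R}M)$.

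The genuinely delicate point — the one I would expect to absorb most of the effort — is the uniformity of $s$: one must ensure that the single element extracted from (1) is adequate for each remaining condition rather than manufacturing a fresh $s$ in every instance. This is precisely why the statements are written with an existential ``there exists a fixed $s\in S$'' and why the cycle above is arranged so that $s$ is never replaced. Beyond that, the only background facts used are that $M$-$rad(N)$ is a submodule containing $N$ (so it is closed under sums and the ``easy'' option $sab\in(N:_{R}M)$ is always consistent with the conclusion) and the elementary stability of $S$ under products; the remaining work is routine bookkeeping with non-units and residuals.
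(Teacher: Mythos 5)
Your core technique is the same as the paper's: a cycle of implications driven by the contrapositive, extracting a witness when the ``easy'' disjunct fails, with the single $S$-element $s$ carried along unchanged. The reformulation via the residual $(N:_{M}ab)$ and the passage to $abL\subseteq N$ for a submodule $L$ are correct and, with $I=aR$, $J=bR$, $K=Rm$, give exactly the paper's step (4)$\Rightarrow$(1). However, your list of reformulations misses the actual content of conditions (2) and (3) of the theorem: there the ring elements $a,b$ themselves are replaced by \emph{proper ideals} $I,J$ (so the hypothesis is $Ibm\subseteq N$, resp.\ $IJm\subseteq N$), and correspondingly the first disjunct of the conclusion is strengthened from $sab\in(N:_{R}M)$ to $sIb\subseteq(N:_{R}M)$, resp.\ $sIJ\subseteq(N:_{R}M)$. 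Your ``ideal variant'' $IabM\subseteq N$ puts the ideal on the module side and leaves the conclusion $sab\in(N:_{R}M)$ untouched, so it does not yield these statements.

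The missing step is short but genuine: assuming $sIJ\nsubseteq(N:_{R}M)$, one must observe that since $(N:_{R}M)$ is an ideal and $IJ$ is generated by products, there exist $x\in I$, $y\in J$ with $sxy\notin(N:_{R}M)$; these $x,y$ are automatically non-units because a proper ideal contains no units, so the element-wise hypothesis applied to $xym\in N$ forces $sm\in M$-$rad(N)$. This witness-picking (together with the remark about non-units) is exactly what the paper's (1)$\Rightarrow$(2)$\Rightarrow$(3) consists of, and your framework absorbs it with no change to $s$. Relatedly, the point you flag as delicate --- the uniformity of $s$ --- is in fact a non-issue here: every implication reuses the same $s$ verbatim, and no new choice is ever made; the only place requiring any care is the ideal-to-element reduction just described.
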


\begin{enumerate}
	\item $N$ is an $S$-$1$-absorbing primary submodule of $M$.
	
	\item There exists an $s\in S$ such that whenever $Ibm$ $\subseteq N$ for some
	proper ideal $I$ of $R,$ $b\in R\backslash U(R)$ and $m\in M$ implies either
	$sIb\subseteq\left(  N:_{R}M\right)  $ or $sm\in M$-$rad(N)$.
	
	\item There exists an $s\in S$ such that whenever $IJm$ $\subseteq N$ for some
	proper ideals $I$, $J$ of $R$ and $m\in M$ implies either $sIJ\subseteq\left(
	N:_{R}M\right)  $ or $sm\in M$-$rad(N)$.
	
	\item There exists an $s\in S$ such that whenever $IJK$ $\subseteq N$ for some
	proper ideals $I$, $J$ of $R$ and a submodule $K$ of $M$ implies either
	$sIJ\subseteq\left(  N:_{R}M\right)  $ or $sK\subseteq M$-$rad(N)$.
\end{enumerate}

\begin{proof}
	(1)$\Rightarrow$(2). Let $s$ be an $S$-element of $N$. Suppose that $Ibm$
	$\subseteq N$ and $sIb\nsubseteq\left(  N:_{R}M\right)  $. Then $sxb\notin
	\left(  N:_{R}M\right)  $ for some non-unit $x\in I$. Since $xbm\in N$ and
	$sxb\notin\left(  N:_{R}M\right)  ,$ we have $sm$ $\in M$-$rad(N),$ as needed.
	
	(2)$\Rightarrow$(3). Let $s$ be the fixed element in (2). Suppose that $IJm$
	$\subseteq N$ but $sIJ\nsubseteq\left(  N:_{R}M\right)  $ for some proper
	ideals $I$, $J$ of $R$ and $m\in M$. Then $sIb\notin\left(  N:_{R}M\right)  $
	for some non-unit $b\in J$. From our assumption (2), we conclude $sm$ $\in
	M$-$rad(N)$.
	
	(3)$\Rightarrow$(4). Let $s$ be the fixed element in (3). Suppose $IJK$
	$\subseteq N$ but $sK$ $\nsubseteq N$ for some proper ideals $I$, $J$ of $R$
	and a submodule $K$ of $M.$ Hence $sm\notin M$-$rad(N)$ for some $m\in K$ which
	implies $sIJ\subseteq\left(  N:_{R}M\right)  $ by (3).
	
	(4)$\Rightarrow$(1). Let $a,b\in R$ be non-unit elements, $m\in M$ and $abm\in
	N$. Put $I$ $=aR$, $J=bR$ and $K=Rm$ in (4), then we are done.
\end{proof}

In the following, we characterize $S$-$1$-absorbing primary ideals of a ring
$R$ by taking $M=R$ as an $R$-module in Theorem \ref{char}.

\begin{theorem}
	\label{char1}Let $S$ be a multiplicatively closed subset of a ring $R$ and $I$
	an ideal of $R$ with $I\cap S=\emptyset$. Then $I$ is an $S$-$1$-absorbing
	primary ideal of $R$ if and only if there exists an $s\in S$ such that
	whenever $I_{1}I_{2}I_{3}\subseteq I$ for some proper ideals $I_{1},I_{2}$ and
	$I_{3}$ of $R$ implies either $sI_{1}I_{2}\subseteq I$ or $sI_{3}%
	\subseteq\sqrt{I.}$
\end{theorem}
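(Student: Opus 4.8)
The plan is to derive this statement directly from Theorem \ref{char}, specialized to the case $M=R$ regarded as a module over itself. First I would record the three identifications that make the translation work: the submodules of the $R$-module $R$ are precisely the ideals of $R$; for an ideal $I$ the residual $(I:_{R}R)$ equals $I$ itself; and the $R$-radical of $I$, namely the intersection of all prime submodules of $R$ containing $I$, coincides with $\sqrt{I}$. The last point holds because a proper ideal $P$ of $R$ is a prime submodule of the module $R$ exactly when $rm\in P$ (with $r\in R$, $m\in R$) forces $m\in P$ or $r\in(P:_{R}R)=P$, i.e. exactly when $P$ is a prime ideal of $R$, and $\sqrt{I}$ is the intersection of the prime ideals of $R$ containing $I$.

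With these identifications in hand, each of the four equivalent conditions of Theorem \ref{char} becomes a statement about ideals of $R$. Condition (1) of Theorem \ref{char}, which says that $N=I$ is an $S$-$1$-absorbing primary submodule of $R$, unravels, using $(I:_{R}R)=I$ and $R$-$rad(I)=\sqrt{I}$, to the assertion that there is an $s\in S$ with $sab\in I$ or $sc\in\sqrt{I}$ whenever $abc\in I$ for non-units $a,b,c\in R$; this is precisely the definition of an $S$-$1$-absorbing primary ideal of $R$. Condition (4) of Theorem \ref{char}, under the same dictionary, reads: there exists $s\in S$ such that whenever $I_{1}I_{2}K\subseteq I$ for proper ideals $I_{1},I_{2}$ of $R$ and a submodule (equivalently, an ideal) $K$ of $R$, then $sI_{1}I_{2}\subseteq I$ or $sK\subseteq\sqrt{I}$. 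Relabelling $K$ as $I_{3}$ yields precisely the right-hand side of the claimed equivalence.

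Therefore the theorem follows by applying the equivalence \textquotedblleft(1)$\Leftrightarrow$(4)\textquotedblright\ of Theorem \ref{char} with $M=R$ and $N=I$. I do not expect any genuine obstacle here: the only step deserving an explicit line of justification is the identity $R$-$rad(I)=\sqrt{I}$, handled above via the correspondence between prime submodules of $R$ and prime ideals of $R$; everything else is a verbatim substitution into the already-established Theorem \ref{char}. If one prefers a self-contained argument, the two implications can also be written out directly: for the \textquotedblleft only if\textquotedblright\ direction, given $I_{1}I_{2}I_{3}\subseteq I$ with $sI_{1}I_{2}\nsubseteq I$, pick non-units $a\in I_{1}$, $b\in I_{2}$ with $sab\notin I$, note $abc\in I$ for every $c\in I_{3}$, and invoke the definition (treating any element of $\sqrt I$, hence of $I$, as non-unit without loss of generality, or using that $c\in\sqrt I$ already when $c$ is a unit is vacuous); for the \textquotedblleft if\textquotedblright\ direction, set $I_{1}=aR$, $I_{2}=bR$, $I_{3}=cR$ for non-units $a,b,c$ with $abc\in I$ and read off $sab\in I$ or $sc\in\sqrt I$.
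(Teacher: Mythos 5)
Your proposal is correct and follows exactly the route the paper intends: the paper gives no separate proof of Theorem \ref{char1}, stating only that it is obtained by taking $M=R$ in Theorem \ref{char}, which is precisely your specialization via $(I:_{R}R)=I$ and $R$-$rad(I)=\sqrt{I}$ together with the equivalence (1)$\Leftrightarrow$(4). Your explicit justification that prime submodules of the $R$-module $R$ are exactly the prime ideals is a welcome extra detail the paper leaves implicit.
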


Recall from \cite[Lemma 2.4]{Hoj} that if $M$ is a finitely generated
multiplication $R$-module, then $\sqrt{(N:_{R}M)}=(M$-$rad(N):_{R}M)$ for any
submodule $N$ of $M$

Next, we give a characterization for $S$-$1$-absorbing primary submodules in
terms of submodules of a finitely generated multiplication modules.

\begin{theorem}
	Let $S$ be a multiplicatively closed subset of a ring $R$ and $M$\ be a
	finitely generated multiplication $R$-module. A proper submodule $N$ of $M$
	with $(N:_{R}M)\cap S=\emptyset$ is an $S$-$1$-absorbing primary submodule of
	$M$ if and only if there exists an $s\in S$ such that whenever $N_{1}%
	N_{2}N_{3}\subseteq N$ for some proper submodules $N_{1}$, $N_{2},$ $N_{3}$ of
	$M$ implies either $sN_{1}N_{2}\subseteq N$ or $sN_{3}\subseteq M$-$rad(N)$.
\end{theorem}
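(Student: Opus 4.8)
The plan is to reduce the statement about submodules to the already-proved characterization in terms of ideals (Theorem~\ref{char1}), using the hypothesis that $M$ is a finitely generated multiplication module together with the dictionary between submodules of $M$ and ideals of $R$ that such modules provide. Recall that when $M$ is a multiplication module every submodule $K$ has the form $K=(K:_R M)M$, and the assignment $K\mapsto (K:_R M)$ is inclusion-preserving and converts products of submodules (in the sense $N_1N_2:=(N_1:_RM)(N_2:_RM)M$) into products of the corresponding ideals; moreover $N_1N_2\subseteq N$ iff $(N_1:_RM)(N_2:_RM)\subseteq (N:_RM)$ when $M$ is faithful, and in the finitely generated multiplication case one also has the key radical identity $\sqrt{(N:_RM)}=(M\text{-}rad(N):_RM)$ quoted from \cite[Lemma 2.4]{Hoj}. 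I would first record these facts in one short paragraph so the translation is clean.

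For the forward direction, I would assume $N$ is $S$-$1$-absorbing primary with $S$-element $s$, and take the ideal $I=(N:_RM)$; by Theorem~\ref{char1} applied to this $I$ (note $I\cap S=\emptyset$ by hypothesis), $I$ is an $S$-$1$-absorbing primary ideal of $R$ with the same $S$-element $s$ — this last point needs a sentence, since Theorem~\ref{char} gives the submodule characterization but one still must check the $S$-element transfers, which follows from the implication (1)$\Rightarrow$(4) of Theorem~\ref{char} specialized appropriately, or directly. Then given proper submodules $N_1,N_2,N_3$ of $M$ with $N_1N_2N_3\subseteq N$, set $I_j=(N_j:_RM)$; these are proper ideals of $R$, and $N_1N_2N_3\subseteq N$ gives $I_1I_2I_3\subseteq(N:_RM)$. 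Theorem~\ref{char1} yields either $sI_1I_2\subseteq(N:_RM)$ or $sI_3\subseteq\sqrt{(N:_RM)}$. In the first case multiply by $M$: $sN_1N_2=sI_1I_2M\subseteq(N:_RM)M\subseteq N$. In the second case, $sI_3\subseteq\sqrt{(N:_RM)}=(M\text{-}rad(N):_RM)$, so $sN_3=sI_3M\subseteq M\text{-}rad(N)$, as required.

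For the converse, I would assume the product-of-submodules condition with $S$-element $s$ and verify the defining condition of an $S$-$1$-absorbing primary submodule directly: given non-units $a,b\in R$ and $m\in M$ with $abm\in N$, put $N_1=aR\cdot m$... — more cleanly, use the ideal side: let $I_1=aR$, $I_2=bR$ be proper ideals and $N_3=Rm$ a submodule, so that $I_1I_2N_3\subseteq N$; applying the hypothesis (in the mixed ideal–submodule form, which is how $N_3$ enters) gives $s(aR)(bR)\subseteq(N:_RM)$, i.e. $sab\in(N:_RM)$, or $sRm\subseteq M\text{-}rad(N)$, i.e. $sm\in M\text{-}rad(N)$. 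Strictly, the stated hypothesis is phrased with three \emph{submodules} $N_1,N_2,N_3$, so I would instead take $N_1=aM$, $N_2=bM$, $N_3=Rm$; then $N_1N_2N_3=(aM)(bM)(Rm)$, and since $M$ is multiplication $(aM:_RM)\supseteq aR$ etc., giving $N_1N_2N_3\subseteq abRm\subseteq N$; the hypothesis then produces $sN_1N_2\subseteq N$ or $sN_3\subseteq M\text{-}rad(N)$, and unwinding $sN_1N_2=sab M\subseteq N$ means $sab\in(N:_RM)$, while $sN_3=sRm\subseteq M\text{-}rad(N)$ means $sm\in M\text{-}rad(N)$.

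The main obstacle I anticipate is purely bookkeeping: making sure the products $N_iN_j$ in the theorem are read with the module-theoretic product convention under which $N_1N_2\subseteq N \iff (N_1:_RM)(N_2:_RM)\subseteq (N:_RM)$, and being careful that "proper submodule of $M$" corresponds to "proper ideal of $R$" in both directions — the forward direction needs $N_j\subsetneq M\Rightarrow (N_j:_RM)\subsetneq R$ (true since $N_j\ne M$), and the converse needs $aM,bM$ to be proper, which holds because $a,b$ are non-units and $M$ is a multiplication module (if $aM=M$ then $a\in U_M(R)$, and for a finitely generated module $aM=M$ forces, by determinant-trick, $a\equiv 1$ modulo... — here one simply notes non-units $a$ need not satisfy $aM\ne M$ in general, so I would instead run the converse through the ideal form $I_1=aR,I_2=bR$, $K=Rm$ of Theorem~\ref{char}(4)/Theorem~\ref{char1}, avoiding the issue entirely). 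Apart from this, every step is a one-line translation, and no genuinely new idea beyond \cite[Lemma 2.4]{Hoj} is needed.
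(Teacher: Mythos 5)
Your forward direction is correct and is essentially the paper's argument: the paper first observes (via its Lemma \ref{d}(1), which is your ``$S$-element transfers'' sentence) that $(N:_RM)$ is an $S$-$1$-absorbing primary ideal, writes $N_j=I_jM$ for proper ideals $I_j$, applies Theorem \ref{char1}, and multiplies back by $M$ using $\sqrt{(N:_RM)}M=M\text{-}rad(N)$. Your version with $I_j=(N_j:_RM)$ is the same translation, and you correctly check that these ideals are proper.

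The converse is where your proposal breaks down. You correctly identify the real difficulty --- to invoke the hypothesis you must exhibit three \emph{proper submodules} of $M$, and the natural candidates $aM$, $bM$ (equivalently $IM$, $JM$ for proper ideals $I,J$) need not be proper, since a non-unit $a$ can satisfy $aM=M$ --- but your proposed escape, ``run the converse through the ideal form $I_1=aR$, $I_2=bR$, $K=Rm$ of Theorem \ref{char}(4),'' is circular. Condition (4) of Theorem \ref{char} is an equivalent reformulation of the \emph{conclusion} ($N$ is $S$-$1$-absorbing primary), not of the \emph{hypothesis} you are given; the hypothesis is stated only for triples of proper submodules, so there is no way to feed ideals into it without first converting them to submodules, which is exactly the step whose properness you have just questioned. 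So the issue is not avoided; it is the crux, and your write-up leaves the converse unproved. (For what it is worth, the paper's own converse sets $N_1=IM$, $N_2=JM$, $N_3=Rm$ and silently assumes these are proper, so your worry is well-founded as a criticism of the paper; but a complete proof would have to dispose of the degenerate cases $IM=M$, $JM=M$, $Rm=M$ separately --- e.g.\ $Rm=M$ makes $M$ cyclic and $ab\in(N:_RM)$ immediately --- rather than declare the issue avoided.)
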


\begin{proof}
	Suppose that $N_{1}N_{2}N_{3}$ $\subseteq N$ for some proper submodules
	$N_{1}=I_{1}M,$ $N_{2}=I_{2}M$ and $N_{3}=I_{3}M$ where $I_{1}$, $I_{2},$
	$I_{3}$ are proper ideals of $R$. Hence $I_{1}I_{2}I_{3}\subseteq(N:_{R}M)$
	and $(N:_{R}M)$ is a $S$-$1$-absorbing primary ideal of $R$ by Lemma \ref{d}.
	This yields that there exists $s\in S$ such that $sI_{1}I_{2}\subseteq\left(
	N:_{R}M\right)  $ or $sI_{3}\subseteq\sqrt{(N:_{R}M)}$ by Theorem \ref{char1}.
	Thus we conclude either $sN_{1}N_{2}=sIJM\subseteq\left(  N:_{R}M\right)  M=N$
	or $sN_{3}\subseteq\sqrt{(N:_{R}M)}M=M$-$rad(N)$, as desired. For the converse,
	suppose that $IJm$ $\subseteq N$ for some proper ideals $I$, $J$ of $R$ and
	$m\in M$. Put $N_{1}=IM$, $N_{2}=JM$ and $N_{3}=Rm$. This follows that
	$sIJ\subseteq(N:_{R}M)$ or $sm\in sN_{3}\subseteq M$-$rad(N)$, thus we are done
	by Theorem \ref{char}.
\end{proof}

Now, we are ready for a characterization of $S$-1-absorbing primary submodules
of finitely generated faithful multiplication modules in terms of presentation
ideals of them.

\begin{theorem}
	\label{char2}Let $M$ be a finitely generated faithful multiplication
	$R$-module and $S$ be a multiplicatively closed subset of $R.$ For a submodule
	$N$ of $M$ provided $(N:_{R}M)\cap S=\emptyset,$ the following statements are equivalent.
\end{theorem}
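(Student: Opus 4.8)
Since the list of equivalent conditions is not reproduced in the excerpt, I read them as: (1) $N$ is an $S$-$1$-absorbing primary submodule of $M$; (2) $(N:_{R}M)$ is an $S$-$1$-absorbing primary ideal of $R$; (3) $N=IM$ for some $S$-$1$-absorbing primary ideal $I$ of $R$ (i.e. $N$ admits an $S$-$1$-absorbing primary presentation ideal). The plan is to exploit the dictionary between ideals of $R$ and submodules of $M$ that a finitely generated faithful multiplication module provides: the map $I\mapsto IM$ is an inclusion-preserving bijection from the ideals of $R$ onto the submodules of $M$ with inverse $N\mapsto(N:_{R}M)$, and $M$ is a cancellation module; consequently $(IM:_{R}M)=I$, $(N:_{R}M)M=N$, and $(Rm:_{R}M)M=Rm$ for every $m\in M$. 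Together with \cite[Lemma 2.4]{Hoj} this also yields $M$-$rad(IM)=\sqrt{I}\,M$ and $(M$-$rad(N):_{R}M)=\sqrt{(N:_{R}M)}$. Granting these, I would run the cycle (1)$\Rightarrow$(2)$\Rightarrow$(3)$\Rightarrow$(1).

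For (1)$\Rightarrow$(2) I would not argue elementwise (the standard hazard with absorbing conditions is that the dichotomy may be witnessed differently for different choices) but instead push the problem to the product form of Theorem~\ref{char}: if $s$ is an $S$-element of $N$ and $abc\in(N:_{R}M)$ with $a,b,c$ non-units of $R$, then $aR\cdot bR\cdot cM\subseteq N$, so part~(4) of Theorem~\ref{char} gives $s\,(aR)(bR)\subseteq(N:_{R}M)$ or $s(cM)\subseteq M$-$rad(N)$; the first says $sab\in(N:_{R}M)$, and the second gives $sc\in(M$-$rad(N):_{R}M)=\sqrt{(N:_{R}M)}$ by \cite[Lemma 2.4]{Hoj}. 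Hence $(N:_{R}M)$ is $S$-$1$-absorbing primary with $S$-element $s$ (alternatively one may just cite Lemma~\ref{d}). For (2)$\Rightarrow$(3) take $I=(N:_{R}M)$; faithful multiplication gives $IM=N$, so $I$ is an $S$-$1$-absorbing primary presentation ideal of $N$.

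For (3)$\Rightarrow$(1), let $N=IM$ with $I$ an $S$-$1$-absorbing primary ideal having $S$-element $s$, and suppose $abm\in N$ with $a,b$ non-units and $m\in M$. If $Rm=M$ then $abM\subseteq N$, so $ab\in(N:_{R}M)$ and any $s$ works; otherwise $(Rm:_{R}M)$ is a proper ideal and, from $abm\in N$, $aR\cdot bR\cdot(Rm:_{R}M)\subseteq(N:_{R}M)=(IM:_{R}M)=I$ with all three factors proper. Theorem~\ref{char1} then yields $s\,(aR)(bR)\subseteq I$, i.e. $sab\in(N:_{R}M)$, or $s(Rm:_{R}M)\subseteq\sqrt{I}$, whence $s(Rm:_{R}M)M\subseteq\sqrt{I}\,M=M$-$rad(N)$; since $m\in(Rm:_{R}M)M$, this forces $sm\in M$-$rad(N)$, as required.

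The step I expect to be the real obstacle is exactly the back-and-forth translation between $N$, its colon ideal, and its presentation ideals, namely the cancellation identities $(IM:_{R}M)=I$ and $(Rm:_{R}M)M=Rm$, together with $M$-$rad(IM)=\sqrt{I}\,M$; these are where both finite generation and faithfulness enter, and they should be recorded carefully at the start. Everything else is book-keeping: isolating the degenerate cases ($Rm=M$, or a factor failing to be proper), and checking that one fixed $s\in S$ serves throughout, which is automatic once the argument is organised around the product characterizations of Theorems~\ref{char} and~\ref{char1}.
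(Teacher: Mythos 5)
Your reconstruction of the three equivalent conditions matches the paper's list (with its items (2) and (3) transposed), and your proof is correct and follows essentially the same route: Lemma~\ref{d}/Theorem~\ref{char} to pass from $N$ to $(N:_{R}M)$, the cancellation property of a finitely generated faithful multiplication module to identify the presentation ideal with the colon ideal, and Theorem~\ref{char1} together with the identity $\sqrt{(N:_{R}M)}=(M$-$rad(N):_{R}M)$ to come back. The only cosmetic difference is that in the converse direction you argue elementwise via the proper ideal $(Rm:_{R}M)$, whereas the paper writes an arbitrary submodule as $K=LM$ and invokes the product characterization of Theorem~\ref{char}; the two are interchangeable.
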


\begin{enumerate}
	\item $N$ is a $S$-1-absorbing primary submodule of $M$.
	
	\item $N=IM$ for some $S$-1-absorbing primary ideal $I$ of $R$.
	
	\item $(N:_{R}M)$ is an $S$-1-absorbing primary ideal of $R$.
\end{enumerate}

\begin{proof}
	(1)$\Rightarrow$(2)$.$ Let $N$ be a $S$-1-absorbing primary submodule of $M$.
	Then $(N:_{R}M)$ is an $S$-1-absorbing primary ideal of $R$ by Proposition
	\ref{d}. Since $M$ is multiplication, put $I=(N:_{R}M)$, so the claim is clear.
	
	(2)$\Rightarrow$(3)$.$ Since $M\ $is cancellation, that is,
	$IM=JM\Leftrightarrow I=J$ for every ideal $I,J\ $of $R$ which means that
	$(N:_{R}M)=(IM:_{R}M)=I$ is an $S$-1-absorbing primary ideal of $R$.
	
	(3)$\Rightarrow$(1)$.$ Suppose that $0\neq IJK\subseteq N\ $for some proper
	ideals $I,J\ $of $R\ $and a submodule $K\ $of $M.\ $Since $M$ is a
	multiplication module, $K=LM\ $for some ideal $L\ $of $R.\ $Then we have
	$IJLM\subseteq(N:_{R}M).\ $We may assume that $L\ $is a proper ideal of
	$R.\ $Since $(N:_{R}M)\ $is an $S$-1-absorbing primary ideal of $R,$ so it is
	an $S$-1-absorbing primary submodule of an $R$-module $R$ and there exists a
	fixed $s\in S$ such that $sIJ\subseteq(N:_{R}M)$ or $sL\subseteq\sqrt
	{(N:_{R}M)}=(M$-$rad(N):_{R}M)$ by Theorem \ref{char1}. This yields that
	$sIJ\subseteq(N:_{R}M)$ or $sK=LM\subseteq M$-$rad(N)$ and $N\ $is
	$S$-1-absorbing primary submodule of $M$ by Theorem \ref{char}.$\ $
\end{proof}

Every $S$-primary submodule of a finitely generated faithful multiplication module is $S$-1-absorbing primary, see the following.

\begin{proposition}
	Let $S$ be a multiplicatively closed subset of a ring $R$ and $M$ be a
	finitely generated faithful multiplication $R$-module. If $N$ is an
	$S$-primary submodule of $M,$ then it is $S$-1-absorbing primary.
\end{proposition}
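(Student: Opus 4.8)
The plan is to reduce to ideals via Theorem \ref{char2}. Since $M$ is finitely generated faithful multiplication, that theorem tells us $N$ is $S$-$1$-absorbing primary exactly when the presentation ideal $(N:_{R}M)$ is an $S$-$1$-absorbing primary ideal of $R$. So I would prove two things: (i) $(N:_{R}M)$ is an $S$-primary ideal of $R$ (the hypothesis, pushed down to the presentation ideal; this can also be extracted from \cite{Ans}); and (ii) every $S$-primary ideal of a ring is $S$-$1$-absorbing primary. Granting both, $(N:_{R}M)$ is $S$-$1$-absorbing primary, and Theorem \ref{char2} (the implication (3)$\Rightarrow$(1)) completes the argument.

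For step (i), let $s\in S$ be an $S$-element of the $S$-primary submodule $N$. If $xy\in(N:_{R}M)$ with $x,y\in R$, then $x(ym)\in N$ for every $m\in M$, so the $S$-primary condition (with coefficient $x$ and element $ym$) gives, for each such $m$, that $sx\in\sqrt{(N:_{R}M)}$ or $sym\in N$. Hence, if $sx\notin\sqrt{(N:_{R}M)}$ then $sym\in N$ for all $m$, that is, $syM\subseteq N$, so $sy\in(N:_{R}M)$. This is precisely the $S$-primary condition for $(N:_{R}M)$, with the same $s$.

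For step (ii), let $I$ be $S$-primary with $S$-element $s$ and let $a,b,c$ be non-units of $R$ with $abc\in I$. Regarding $I$ as a submodule of the $R$-module $R$ and applying the $S$-primary condition to $c\cdot(ab)\in I$ — so that $c$ sits in the coefficient slot and $ab$ in the element slot — we get $sc\in\sqrt{I}$ or $s(ab)\in I$. That is exactly the defining condition of an $S$-$1$-absorbing primary ideal (again with the same $s$), and no non-unit assumption on $ab$ is needed since the $S$-primary condition is unrestricted.

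The subtlety I expect to matter is conceptual, not computational: in an $S$-primary object the radical sits on the coefficient, whereas in an $S$-$1$-absorbing primary object it sits on the lone remaining factor, so the two conditions appear to pull in opposite directions. Step (ii) works because for ideals these two roles can simply be swapped, using commutativity of the ring — one puts $c$ in the coefficient position. That swap is not available for submodules (a module element cannot be commuted past ring elements), which is why the detour through $(N:_{R}M)$, made legitimate by the finitely generated faithful multiplication hypothesis, seems unavoidable; a direct module-level argument stalls precisely at trying to improve ``$s(ab)\in\sqrt{(N:_{R}M)}$'' to ``$s'(ab)\in(N:_{R}M)$''.
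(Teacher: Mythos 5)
Your proof is correct and follows the same route as the paper: pass to the presentation ideal $(N:_{R}M)$, show it is $S$-primary and hence $S$-$1$-absorbing primary as an ideal, and conclude by Theorem \ref{char2}. The only difference is that you supply direct verifications of steps (i) and (ii) — which the paper delegates to a citation of \cite{Ans} and to an unproved ``hence,'' respectively — and both of your verifications (including the coefficient/element swap $c\cdot(ab)\in I$ in step (ii)) are sound.
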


\begin{proof}
	Let $N$ be an $S$-primary submodule of $M.$ Then $(N:_{R}M)$ is a $S$-primary
	ideal of $R$ by \cite[Proposition 2.11 (1)]{Ans} and hence $(N:_{R}M)$ is a
	$S$-1-absorbing primary ideal of $R.$ Thus $N$ is an $S$-1-absorbing primary
	submodule of $M$ by Theorem \ref{char2}.
\end{proof}

\begin{lemma}
	\label{lrad}Let $S$ be a multiplicatively closed subset of $R.$ If $I$ is an
	$S$-1-absorbing primary ideal, then $\sqrt{I}$ is an $S$-prime ideal of $R$.
\end{lemma}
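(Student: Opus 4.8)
The plan is to verify that a single $S$-element of $I$ already witnesses that $\sqrt{I}$ is $S$-prime. Recall that an ideal $P$ with $P\cap S=\emptyset$ is an $S$-prime ideal if there is a fixed $s\in S$ such that $ab\in P$ forces $sa\in P$ or $sb\in P$. First I would dispose of the disjointness requirement: if some $s'\in S$ belonged to $\sqrt{I}$, then a power $(s')^{n}$ would lie in $I\cap S$, contradicting $I\cap S=\emptyset$ since $S$ is multiplicatively closed. So $\sqrt{I}\cap S=\emptyset$, and it remains to produce the fixed $s$.

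Fix an $S$-element $s\in S$ of $I$ (it exists by hypothesis) and take arbitrary $a,b\in R$ with $ab\in\sqrt{I}$; I claim $sa\in\sqrt{I}$ or $sb\in\sqrt{I}$. If $a$ is a unit then $b=a^{-1}(ab)\in\sqrt{I}$, hence $sb\in\sqrt{I}$; symmetrically if $b$ is a unit. So assume both $a$ and $b$ are non-units, and choose $n\geq1$ with $(ab)^{n}=a^{n}b^{n}\in I$. The key device is to manufacture a product of \emph{three} non-units lying in $I$, so that the $1$-absorbing primary hypothesis can be invoked: from $a^{n}b^{n}\in I$ we get $a^{n+1}b^{n}=a\cdot a^{n}\cdot b^{n}\in I$, and each of $a$, $a^{n}$, $b^{n}$ is a non-unit because a power of a non-unit is again a non-unit.

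Now apply the defining property of the $S$-$1$-absorbing primary ideal $I$ to the factorization $a\cdot a^{n}\cdot b^{n}\in I$: this yields $sa^{n+1}\in I$ or $sb^{n}\in\sqrt{I}$. In the first case $(sa)^{n+1}=s^{n}(sa^{n+1})\in I$, so $sa\in\sqrt{I}$. In the second case, choosing $k\geq1$ with $(sb^{n})^{k}=s^{k}b^{nk}\in I$ gives $(sb)^{nk}=s^{k(n-1)}(s^{k}b^{nk})\in I$, so $sb\in\sqrt{I}$. In either case the conclusion holds, so $\sqrt{I}$ is $S$-prime with the same $S$-element $s$.

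The only genuine obstacle is the mismatch in arity: the $1$-absorbing primary condition concerns products $abc$ of three non-units, whereas $S$-primeness of $\sqrt{I}$ is about products $ab$ of two elements. Multiplying through by the extra factor $a^{n}$ to reach $a^{n+1}b^{n}=a\cdot a^{n}\cdot b^{n}$ is what bridges this gap; the auxiliary observations that powers of non-units stay non-units and that $I\subseteq\sqrt{I}$ make the remaining bookkeeping routine.
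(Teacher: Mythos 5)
Your proof is correct and follows essentially the same route as the paper: both arguments pass from $(ab)^n\in I$ to a product of three non-units ($a\cdot a^{n}\cdot b^{n}$ in your case, $a^{t}\cdot a^{t}\cdot b^{k}$ with $k=2t$ in the paper's) so that the $S$-$1$-absorbing primary hypothesis applies, and then extract $sa\in\sqrt{I}$ or $sb\in\sqrt{I}$ by taking radicals. Your version is slightly more careful in that it also verifies the disjointness $\sqrt{I}\cap S=\emptyset$ and handles the unit cases explicitly, both of which the paper leaves implicit.
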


\begin{proof}
	Let $s$ be an $S$-element of $I$, $a,b\in R$ and $ab\in\sqrt{I}$. If $a$ or
	$b$ is unit, then the claim is clear. So, assume that $a,b$ are non-unit
	elements. Then $a^{k}b^{k}=a^{t}a^{t}b^{k}\in I$ for some $k=2t>1$ which
	implies that $sa^{k}\in I$ or $sb^{k}\in\sqrt{I}$. Thus $sa\in\sqrt{I}$ or
	$sb\in\sqrt{I}$ and $\sqrt{I}$ is an $S$-prime ideal of $R$.
\end{proof}

\begin{proposition}
	Let $S$ be a multiplicatively closed subset of $R.$ If $N$ is an
	$S$-1-absorbing primary submodule of $M$, then $M$-$rad(N)$ is an $S$-prime
	submodule of $M$.
\end{proposition}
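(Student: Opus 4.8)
The plan is to mimic the ring-theoretic argument of Lemma \ref{lrad}, transported to the module setting, using the characterization of $S$-prime submodules in terms of the ideal $(M\text{-}rad(N):_{R}M)$ and the fact (cited from \cite[Lemma 2.4]{Hoj} above, or by direct reasoning) that $M\text{-}rad(N)$ behaves like a radical. Let $s\in S$ be an $S$-element of $N$. A submodule $P$ of $M$ with $(P:_{R}M)\cap S=\emptyset$ is $S$-prime when there is a fixed $t\in S$ such that $am\in P$ with $a\in R$, $m\in M$ forces $ta\in(P:_{R}M)$ or $tm\in P$; so I would aim to verify this for $P=M\text{-}rad(N)$ with the same element $s$ (or a bounded power of it). First I would note $(M\text{-}rad(N):_{R}M)\cap S=\emptyset$: indeed $(N:_{R}M)\subseteq(M\text{-}rad(N):_{R}M)\subseteq\sqrt{(N:_{R}M)}$, and $\sqrt{(N:_{R}M)}\cap S=\emptyset$ because $S$ is multiplicatively closed and $(N:_{R}M)\cap S=\emptyset$.

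Next, suppose $am\in M\text{-}rad(N)$ for some $a\in R$, $m\in M$. If $a$ is a unit the conclusion $sa\in(M\text{-}rad(N):_{R}M)$ needs care — actually if $a$ is a unit then $m=a^{-1}(am)\in M\text{-}rad(N)$, so $sm\in M\text{-}rad(N)$ and we are done; so assume $a$ is a non-unit. Since $am\in M\text{-}rad(N)$, some power lands in $N$ in the appropriate sense: more precisely $a^{k}m\in N$ for a suitable $k$ — this is the step that requires justification and is where I expect the main obstacle to lie. In the faithful finitely generated multiplication case one has $M\text{-}rad(N)=\sqrt{(N:_{R}M)}M$, and $am\in\sqrt{(N:_{R}M)}M$ combined with multiplication structure lets one write $a^{n}m\in N$ for some $n$; in general one argues via the prime submodules containing $N$. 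Taking $k=2t$ with $k$ large enough, write $a^{k}m=a^{t}\cdot a^{t}\cdot m\in N$; applying the $S$-$1$-absorbing primary property to the non-units $a^{t},a^{t}$ (enlarging $t$ if needed so that $a^{t}$ is a non-unit, which is automatic since $a$ is a non-unit) and the element $m$ gives $s a^{t}a^{t}=sa^{k}\in(N:_{R}M)$ or $sm\in M\text{-}rad(N)$.

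In the first case $sa^{k}\in(N:_{R}M)\subseteq(M\text{-}rad(N):_{R}M)$, hence $(sa)^{k}\in(M\text{-}rad(N):_{R}M)$; since $(M\text{-}rad(N):_{R}M)=\sqrt{(N:_{R}M)}$ (using \cite[Lemma 2.4]{Hoj} when $M$ is finitely generated multiplication, or arguing that it is a radical ideal directly as an intersection of primes $(P:_{R}M)$) is a radical ideal, we get $sa\in(M\text{-}rad(N):_{R}M)$. In the second case $sm\in M\text{-}rad(N)$ directly. Either way the $S$-prime condition holds for $M\text{-}rad(N)$ with $S$-element $s$, completing the proof. The delicate point to pin down precisely is the passage from $am\in M\text{-}rad(N)$ to $a^{k}m\in N$: I would handle it by first recording that $M\text{-}rad(N)=\{x\in M: a^{n}x\in N \text{ for some } n\ge 1 \text{ whenever } a\in R\}$ fails in general, so instead I would run the argument prime-by-prime — for each prime submodule $P\supseteq N$, $am\in P$ gives $a\in(P:_{R}M)$ or $m\in P$, and then collect — or simply restrict, as the surrounding section does, to the finitely generated multiplication hypothesis where the clean identity $M\text{-}rad(N)=\sqrt{(N:_{R}M)}M$ is available and makes the power argument immediate.
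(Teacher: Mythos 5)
There is a genuine gap at exactly the step you flagged: the passage from $am\in M$-$rad(N)$ to $a^{k}m\in N$ is false, and neither of your proposed repairs closes it. Already for $M=R$ (a finitely generated faithful multiplication module over itself, where $M$-$rad(N)=\sqrt{N}$) the claim reads ``$am\in\sqrt{I}$ implies $a^{k}m\in I$ for some $k$,'' and this fails: take $I=p^{2}q\mathbb{Z}$ in $\mathbb{Z}$ with $p\neq q$ primes, $a=q$, $m=p$; then $am=pq\in\sqrt{I}=pq\mathbb{Z}$ but $q^{k}p\notin p^{2}q\mathbb{Z}$ for every $k$. (One checks that $I$ is $S$-primary, hence $S$-$1$-absorbing primary, for $S=\{q^{n}\}$, so this situation occurs under the full hypotheses of the proposition.) The ring-case Lemma \ref{lrad} survives because there one raises the whole product to a power, $(ab)^{k}=a^{k}b^{k}$, powering \emph{both} factors; in a module one cannot power $m$. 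Your ``prime-by-prime, then collect'' fallback also fails: for each prime $P\supseteq N$ you get $a\in(P:_{R}M)$ or $m\in P$, but which disjunct holds depends on $P$, so nothing can be collected --- if that step were valid it would show that every $M$-radical is prime, which is false.

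The viable route --- and the one the paper takes --- avoids element-level manipulation of $M$-$rad(N)$ entirely: assuming $M$ is a finitely generated (faithful) multiplication module, $(N:_{R}M)$ is an $S$-$1$-absorbing primary ideal (Theorem \ref{char2}, Lemma \ref{d}), hence $\sqrt{(N:_{R}M)}$ is an $S$-prime ideal by Lemma \ref{lrad}, where the powering trick is legitimate; one then transfers back to the submodule level via \cite[Proposition 2.9(2)]{Sev} together with $M$-$rad(N)=\sqrt{(N:_{R}M)}M$. Your opening reduction to the ideal $(M$-$rad(N):_{R}M)$ was heading in the right direction; the fix is to work with $\sqrt{(N:_{R}M)}$ throughout rather than trying to verify the $S$-prime condition on elements of $M$-$rad(N)$ directly. (Note also that the inclusion $(M$-$rad(N):_{R}M)\subseteq\sqrt{(N:_{R}M)}$ you invoke for disjointness from $S$ is itself a multiplication-module fact, not automatic for general $M$.)
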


\begin{proof}
	Follows from Theorem \ref{char2}, Lemma \ref{lrad} and \cite[Proposition 2.9
	(2)]{Sev}.
\end{proof}

Let $S$ be a multiplicatively closed subset of a ring $R$. Then $S$ is said to
satisfy the maximal multiple condition if there exists $s\in S$ such that $t$
divides $s$ for each $t\in S$, \cite{And}. For instance, if $S$ is finite,
then $S$ satisfies the maximal multiple condition. Also, $U(R)$ is a
multiplicatively closed set satisfying the maximal multiple condition.

\begin{proposition}
	Let $S$ be a multiplicatively closed subset of a ring $R$, and $N$ be a
	submodule of an $R$-module $M$.
\end{proposition}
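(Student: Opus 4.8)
The plan is to use the maximal multiple condition to collapse the existential quantifier in the definition of an $S$-$1$-absorbing primary submodule into one explicit witness. I would fix $s\in S$ with $t\mid s$ for every $t\in S$; in particular, if $S$ is finite one may take $s$ to be the product of its elements, and if $S\subseteq U(R)$ one may take $s=1$. The substantive direction is to show that if $N$ is an $S$-$1$-absorbing primary submodule, then this single fixed $s$ is already an $S$-element of $N$. For this I would first invoke the definition to obtain \emph{some} $S$-element $t\in S$ of $N$, and then use the maximal multiple condition to write $s=tt'$ for some $t'\in R$.

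Next I would take non-units $a,b\in R$ and $m\in M$ with $abm\in N$. Applying the $S$-$1$-absorbing primary property of $N$ with respect to $t$ gives $tab\in(N:_{R}M)$ or $tm\in M$-$rad(N)$. Multiplying through by $t'$, and using that $(N:_{R}M)$ is an ideal of $R$ and $M$-$rad(N)$ is a submodule of $M$, I obtain $sab=t'(tab)\in(N:_{R}M)$ or $sm=t'(tm)\in M$-$rad(N)$. Thus $s$ is an $S$-element of $N$, as claimed. The converse is immediate: since $s\in S$, exhibiting $s$ as an $S$-element is precisely the definition of $N$ being $S$-$1$-absorbing primary, and the condition $(N:_{R}M)\cap S=\emptyset$ is part of the hypothesis throughout.

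For any part of the statement that is phrased through a residual such as $(N:_{M}s)$ or through the localization $S^{-1}N$ in $S^{-1}M$, the same multiplication trick applies once one notes that $(N:_{M}s)$ is proper (because $s\notin(N:_{R}M)$) and that $M$-$rad(N)$ behaves well under the operation in question; here I would pass to the ideal-theoretic form via Theorem \ref{char}, and, in the finitely generated faithful multiplication setting, via Theorem \ref{char2}.

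I do not anticipate a genuine obstacle. The only point deserving care is that the fixed element $s$ (and the cofactor $t'$) may be a unit or a non-unit, but this is harmless: $s$ and $t'$ are used only to multiply the already-obtained memberships $tab\in(N:_{R}M)$ and $tm\in M$-$rad(N)$ from the outside, and never enter a triple product $abm$ to which the definition of an $S$-$1$-absorbing primary submodule is applied, so the hypothesis that $a$ and $b$ are non-units is never disturbed.
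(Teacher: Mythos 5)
Your proposal does not prove the proposition in question. The statement whose proof uses the maximal multiple condition is the localization result: (1) if $N$ is an $S$-$1$-absorbing primary submodule of $M$, then $S^{-1}N$ is a $1$-absorbing primary $S^{-1}R$-submodule of $S^{-1}M$; and (2) conversely, if $S$ satisfies the maximal multiple condition and $S^{-1}N$ is $1$-absorbing primary in $S^{-1}M$, then $N$ is $S$-$1$-absorbing primary. What you actually prove is an internal remark about the definition: if $N$ is already $S$-$1$-absorbing primary with some $S$-element $t$, and $s=tt'$ is the maximal multiple, then $s$ is also an $S$-element. That is true (and is the trivial "multiply from the outside" observation), but it is not the content of either implication; in particular, nothing in your argument ever mentions $S^{-1}N$, $(S^{-1}N:_{S^{-1}R}S^{-1}M)$, or $S^{-1}M$-$rad(S^{-1}N)$ except as an afterthought that "the same multiplication trick applies."

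Concretely, here is what is missing. For (1) one must start from $\frac{a}{s_{1}}\frac{b}{s_{2}}\frac{m}{s_{3}}\in S^{-1}N$ with $\frac{a}{s_{1}},\frac{b}{s_{2}}$ non-units of $S^{-1}R$, clear denominators to get $uabm\in N$ for some $u\in S$, observe that $a,b$ are then non-units of $R$, apply the fixed $S$-element of $N$, and push the resulting memberships back into the localization using $S^{-1}(N:_{R}M)\subseteq(S^{-1}N:_{S^{-1}R}S^{-1}M)$ and $S^{-1}(M$-$rad(N))=S^{-1}M$-$rad(S^{-1}N)$; no maximal multiple condition is needed here. For (2) one starts from $abm\in N$, passes to $\frac{a}{1}\frac{b}{1}\frac{m}{1}\in S^{-1}N$, applies the $1$-absorbing primary property of $S^{-1}N$, and pulls back to obtain $uab\in(N:_{R}M)$ or $vm\in M$-$rad(N)$ where $u,v\in S$ \emph{depend on} $a,b,m$. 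The maximal multiple condition is then invoked precisely to replace these element-dependent witnesses $u,v$ by the single $s_{\max}$ divisible by every element of $S$ — that is, to uniformize witnesses coming back from the localization, not (as in your argument) to upgrade a fixed $S$-element that the hypothesis does not provide. Since your proof assumes at the outset that $N$ already has an $S$-element, it begs the question in the only direction where the maximal multiple condition does any work.
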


\begin{enumerate}
	\item If $N$ is an $S$-1-absorbing primary submodule of $M$, then $S^{-1}N$ is
	a 1-absorbing primary $S^{-1}R$-submodule of $S^{-1}M$.
	
	\item If $S$ satisfies the maximal multiple condition, and $S^{-1}N$ is a
	1-absorbing primary $S^{-1}R$-submodule of $S^{-1}M$, then $N$ is an
	$S$-1-absorbing primary submodule of $M.$
\end{enumerate}

\begin{proof}
	(1) Let $\frac{a}{s_{1}},\frac{b}{s_{2}}$ be non-unit elements of $S^{-1}R$
	and $\frac{m}{s_{3}}\in S^{-1}M$ with $\frac{a}{s_{1}}\frac{b}{s_{2}}\frac
	{m}{s_{3}}\in S^{-1}N$. Then $uabm\in N$ for some $u\in S$. Hence, there
	exists an $s\in S$ such that $suab\in(N:_{R}M)$ or $sm\in M$-$rad(N)$. This
	yields either $\frac{a}{s_{1}}\frac{b}{s_{2}}=\frac{suab}{sus_{1}s_{2}}\in
	S^{-1}(N:_{R}M)\subseteq(S^{-1}N:_{S^{-1}R}S^{-1}M)$ or $\frac{m}{s_{3}}%
	=\frac{sm}{ss_{3}}\in S^{-1}(M$-$rad(N))=S^{-1}M$-$rad(S^{-1}N)$ and $S^{-1}N$ is
	a 1-absorbing primary submodule of $S^{-1}M$.
	
	(2) Let $s_{\max}\in S$ is the element which has the maximal multiple
	condition. Suppose that $abm\in N$ for some non-unit elements $a,b\in R$ and
	$m\in M$. Then $\frac{a}{1}\frac{b}{1}\frac{m}{1}\in S^{-1}N$ which implies
	$\frac{a}{1}\frac{b}{1}\in(S^{-1}N:_{S^{-1}R}S^{-1}M)\subseteq$ $S^{-1}%
	(N:_{R}M)$ or $\frac{m}{1}\in S^{-1}M$-$rad(S^{-1}N)=S^{-1}(M$-$rad(N))$. This
	follows that $uab\in(N:_{R}M)$ or $vm\in M$-$rad(N)$. Since $u$ and $v$ divide
	$s_{\max}$, we get $s_{\max}ab\in(N:_{R}M)$ or $s_{\max}m\in M$-$rad(N)$ as needed.
\end{proof}

\begin{proposition}
	Let $S$ be a multiplicatively closed subset of $R,$ and $M$ be a
	multiplication $R$-module. If $\{N_{i}\}_{i=1}^{k}$ is a family of
	$S$-1-absorbing primary submodules of $M$ with the same $M$-radical, then $%
	{\displaystyle\bigcap\limits_{i=1}^{k}}
	N_{i}$ is also $S$-1-absorbing primary submodule of $M.$
\end{proposition}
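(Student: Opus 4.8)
The plan is to reduce to the ideal-theoretic situation by passing to the presentation ideals, using that $M$ is multiplication so that each $N_i = I_i M$ for some ideal $I_i$ of $R$, and that the common $M$-radical condition translates into a statement about the radicals $\sqrt{I_i}$. Write $N = \bigcap_{i=1}^k N_i$ and let $P := M\text{-}rad(N_i)$ be the common $M$-radical; since $M$ is multiplication, $P = \mathfrak{p}M$ where $\mathfrak{p} = (P :_R M)$, and for each $i$ one has $\sqrt{(N_i :_R M)} = (M\text{-}rad(N_i):_R M) = \mathfrak{p}$ (using the finitely-generated/multiplication radical identity quoted before Theorem~\ref{char2}, or more elementarily the fact that in a multiplication module prime submodules correspond to certain prime ideals). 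First I would record that $(N :_R M) \supseteq \bigcap_i (N_i :_R M)$ and that this ideal still misses $S$, so $N$ is a legitimate candidate.

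Next I would handle the $S$-elements. Each $N_i$ has an $S$-element $s_i \in S$; since $S$ is multiplicatively closed, set $s := \prod_{i=1}^k s_i \in S$. I claim $s$ is an $S$-element witnessing that $N$ is $S$-$1$-absorbing primary. So suppose $abm \in N$ with $a,b \in R \setminus U(R)$ and $m \in M$. Then $abm \in N_i$ for every $i$. For each $i$, applying the $S$-$1$-absorbing primary property of $N_i$ with its $S$-element $s_i$ gives either $s_i ab \in (N_i :_R M)$ or $s_i m \in M\text{-}rad(N_i) = P$. Multiplying through by the complementary factors $\prod_{j \neq i} s_j$, in the first case we get $s\,ab \in (N_i :_R M)$, and in the second case $s\,m \in P$.

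Now split on the index set. If for \emph{some} $i$ the second alternative holds, then $s\,m \in P = M\text{-}rad(N_i) = M\text{-}rad(N)$ (the common radical is exactly $M\text{-}rad(N)$ since the $N_i$ have equal $M$-radical and $M\text{-}rad$ of an intersection of submodules with equal radical is that radical — this uses $M\text{-}rad(N) = \bigcap_i M\text{-}rad(N_i)$ in the multiplication setting, which follows from the radical identity and $\sqrt{\bigcap I_i} = \bigcap \sqrt{I_i}$), and we are done. Otherwise the first alternative holds for every $i$, i.e. $s\,ab \in (N_i :_R M)$ for all $i$, hence $s\,ab \in \bigcap_i (N_i :_R M) = (\bigcap_i N_i :_R M) = (N :_R M)$, and again we are done.

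The main obstacle is the radical bookkeeping: one must be careful that $M\text{-}rad\bigl(\bigcap_i N_i\bigr)$ really equals the common value $P$, not just contains $\bigcap_i M\text{-}rad(N_i)$; this is where multiplicativity of $M$ and the identity $\sqrt{(N:_RM)} = (M\text{-}rad(N):_RM)$ (equivalently $M\text{-}rad(I M) = \sqrt{I}\,M$) do the real work, via $\sqrt{\bigcap_i I_i} = \bigcap_i \sqrt{I_i} = \mathfrak{p}$. Everything else is the routine ``multiply the $S$-elements together'' argument. I would state the radical computation as a short preliminary observation and then the case split above is a couple of lines.
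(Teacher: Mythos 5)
Your argument is correct and is essentially the paper's proof: both take $s=s_1\cdots s_k$ as the $S$-element, apply each $N_i$'s property to $abm$, and reduce the only nontrivial point to the identity $M$-$rad\bigl(\bigcap_i N_i\bigr)=M$-$rad(N_j)$ for multiplication modules (the paper cites \cite[Proposition 2.14(3)]{Hoj} where you rederive it via presentation ideals); the paper merely phrases the case split contrapositively (assume $sab\notin(\bigcap_i N_i:_RM)$, pick the offending index $j$, conclude $s_jm\in M$-$rad(N_j)$). Your opening detour through $N_i=I_iM$ is not needed for the core argument, and note the containment you record should be the (trivially true) equality $(N:_RM)=\bigcap_i(N_i:_RM)$, whose inclusion into each $(N_i:_RM)$ is what gives disjointness from $S$.
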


\begin{proof}
	Let $s_{i}\in S$ be an $S$-element of $N_{i}$ for each $i=1,...,k.$ First,
	note that $\left(
	{\displaystyle\bigcap\limits_{i=1}^{k}}
	N_{i}\right)  \cap S=\emptyset$. Put $s=s_{1}s_{2}...s_{k}$. Suppose that
	$abm\in%
	{\displaystyle\bigcap\limits_{i=1}^{k}}
	N_{i}$ but $sab\notin\left(
	{\displaystyle\bigcap\limits_{i=1}^{k}}
	N_{i}:_{R}M\right)  $ for some non-unit elements $a,b\in R$ and $m\in M$. Then
	$sab\notin(N_{j}:_{R}M)$ for some $j\in\{1,\dots,k\}.$ Since $N_{j}$ is
	$S$-1-absorbing primary and $abm\in N_{j}$, we have $s_{j}m\in M$-$rad(N_{j})$
	and so $sm\in M$-$rad(N_{j})=M$-$rad\left(
	{\displaystyle\bigcap\limits_{i=1}^{k}}
	N_{i}\right)  $ by \cite[Proposition 2.14 (3)]{Hoj}, we are done.
\end{proof}

\begin{lemma}
	\label{lemf}Let $f:M_{1}\rightarrow M_{2}$ be an $R$-module epimorphism and
	$S$ a multiplicatively closed subset of $R$.
\end{lemma}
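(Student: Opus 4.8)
The plan is to prove both the ``image'' and ``preimage'' parts of the statement by reducing each to the defining property of $S$-$1$-absorbing primary submodules, after first recording two auxiliary facts about an $R$-module epimorphism $f:M_{1}\rightarrow M_{2}$. First, the colon-ideal identities $(f(N_{1}):_{R}M_{2})=(N_{1}:_{R}M_{1})$ whenever $\ker f\subseteq N_{1}$, and $(f^{-1}(N_{2}):_{R}M_{1})=(N_{2}:_{R}M_{2})$ in general; these follow immediately from surjectivity of $f$. Second, the $M$-radical identities $M_{2}\text{-}rad(f(N_{1}))=f(M_{1}\text{-}rad(N_{1}))$ (again under $\ker f\subseteq N_{1}$) and $M_{1}\text{-}rad(f^{-1}(N_{2}))=f^{-1}(M_{2}\text{-}rad(N_{2}))$; these follow from the order-preserving bijection between prime submodules of $M_{2}$ and prime submodules of $M_{1}$ containing $\ker f$, carried by $f$ and $f^{-1}$. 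As a byproduct these identities preserve the disjointness hypotheses, e.g.\ $(f(N_{1}):_{R}M_{2})\cap S=(N_{1}:_{R}M_{1})\cap S=\emptyset$.

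For the image part, fix an $S$-element $s\in S$ of $N_{1}$ and suppose $abm_{2}\in f(N_{1})$ for non-units $a,b\in R$ and $m_{2}\in M_{2}$. Using surjectivity write $m_{2}=f(m_{1})$; then $f(abm_{1})=abm_{2}\in f(N_{1})$, so $abm_{1}\in N_{1}+\ker f=N_{1}$. The defining property of $N_{1}$ gives $sab\in(N_{1}:_{R}M_{1})$ or $sm_{1}\in M_{1}\text{-}rad(N_{1})$. In the first case $sab\in(N_{1}:_{R}M_{1})=(f(N_{1}):_{R}M_{2})$; in the second, $sm_{2}=f(sm_{1})\in f(M_{1}\text{-}rad(N_{1}))=M_{2}\text{-}rad(f(N_{1}))$. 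Hence $s$ witnesses that $f(N_{1})$ is $S$-$1$-absorbing primary. For the preimage part, let $s\in S$ be an $S$-element of $N_{2}$ and suppose $abm_{1}\in f^{-1}(N_{2})$ with $a,b$ non-units and $m_{1}\in M_{1}$. Then $ab\,f(m_{1})=f(abm_{1})\in N_{2}$, so applying the definition in $M_{2}$ yields $sab\in(N_{2}:_{R}M_{2})$ or $sf(m_{1})\in M_{2}\text{-}rad(N_{2})$. By the colon identity the former gives $sab\in(f^{-1}(N_{2}):_{R}M_{1})$, and the latter gives $sm_{1}\in f^{-1}(M_{2}\text{-}rad(N_{2}))=M_{1}\text{-}rad(f^{-1}(N_{2}))$; so $f^{-1}(N_{2})$ is $S$-$1$-absorbing primary with the same $S$-element.

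The main obstacle I anticipate is not the module arithmetic but verifying the $M$-radical identities cleanly, i.e.\ that passing a submodule through $f$ or $f^{-1}$ commutes with taking the $M$-radical. This rests on the fact that under an epimorphism the prime submodules correspond bijectively (preimages of prime submodules are prime, and images of prime submodules containing $\ker f$ are prime), together with the observation that $\ker f\subseteq N_{1}$ forces every prime submodule containing $N_{1}$ to contain $\ker f$, so the two intersections defining the $M$-radicals match term by term. I would invoke or quote this standard correspondence (as the paper does for $S$-prime submodules via \cite{Sev}) rather than reprove it, which keeps the argument short and routine.
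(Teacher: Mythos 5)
The target here, Lemma \ref{lemf}, is just a list of four elementary facts about an epimorphism --- the colon inclusions $(N:_{R}M_{1})\subseteq(f(N):_{R}M_{2})$ and $(K:_{R}M_{2})\subseteq(f^{-1}(K):_{R}M_{1})$, and the radical identities $f(M_{1}\text{-}rad(N))=M_{2}\text{-}rad(f(N))$ for $\ker f\subseteq N$ and $f^{-1}(M_{2}\text{-}rad(K))=M_{1}\text{-}rad(f^{-1}(K))$ --- which the paper does not prove at all but simply cites from \cite{Ece} and \cite{Lu2}. What you actually proved is the next result, Proposition \ref{f}, on images and preimages of $S$-$1$-absorbing primary submodules; the lemma's content appears in your write-up only as the ``auxiliary facts.'' Those auxiliary facts are nevertheless stated and justified correctly, and they do establish the lemma: the colon statements follow from surjectivity exactly as you say (you even obtain the sharper equalities, rightly noting that the image version needs $\ker f\subseteq N$ so that $f^{-1}(f(N))=N$; without that hypothesis only the inclusion asserted in the lemma survives), and the radical identities reduce to the correspondence $P\mapsto f^{-1}(P)$ between prime submodules of $M_{2}$ and prime submodules of $M_{1}$ containing $\ker f$, together with the observation that every prime submodule over $f^{-1}(K)$, or over an $N$ containing $\ker f$, automatically contains $\ker f$, so the two intersections defining the radicals match term by term. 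Your proof of the proposition itself also coincides with the paper's, and is in fact slightly tidier in the image case, where the paper settles for $sf(m)\in f(N)\subseteq M_{2}\text{-}rad(f(N))$ rather than pushing $sm_{1}\in M_{1}\text{-}rad(N)$ through $f$ as you do. In short: no gap, but you proved a superset of what was asked, and for the lemma itself the honest comparison is that the paper offers no proof to compare against.
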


\begin{enumerate}
	\item If $N$ is a submodule of $M_{1}$, then $(N:_{R}M_{1})\subseteq
	(f(N):_{R}M_{2}),$ \cite{Ece}.
	
	\item If $K$ is a submodule of $M_{2}$, then$(K:_{R}M_{2})\subseteq
	(f^{-1}(K):_{R}M_{1})$, \cite{Ece}.
	
	\item If $N$ is a submodule of $M_{1}$ and $\ker\left(  f\right)  \subseteq
	N$, then $f\left(  M_{1}\text{-}rad(N)\right)  =M_{2}$-$rad(f\left(  N\right)
	)$, \cite{Lu2}.
	
	\item If $K$ is a submodule of $M_{2}$, then $f^{-1}\left(  M_{2}%
	\text{-}rad(K)\right)  =M_{1}$-$rad(f^{-1}\left(  K\right)  )$, \cite{Lu2}.
\end{enumerate}

\begin{proposition}
	\label{f}Let $f:M_{1}\rightarrow M_{2}$ be an $R$-module epimorphism and $S$
	be a multiplicatively closed subset of $R$.
\end{proposition}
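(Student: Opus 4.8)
The statement to be proven is Proposition~\ref{f}, which presumably asserts (in parallel with the standard results on $1$-absorbing primary submodules in \cite{Ece}) that for an $R$-module epimorphism $f\colon M_1\to M_2$ and a multiplicatively closed set $S$, (i) if $N$ is an $S$-$1$-absorbing primary submodule of $M_1$ containing $\ker f$ with $(N:_R M_1)\cap S=\emptyset$, then $f(N)$ is an $S$-$1$-absorbing primary submodule of $M_2$; and (ii) if $K$ is an $S$-$1$-absorbing primary submodule of $M_2$ with $(K:_R M_2)\cap S=\emptyset$, then $f^{-1}(K)$ is an $S$-$1$-absorbing primary submodule of $M_1$.

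\medskip

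The plan is to verify the defining condition directly in each case, using the dictionary provided by Lemma~\ref{lemf}. For part (i), I would first check the non-triviality hypothesis $(f(N):_R M_2)\cap S=\emptyset$: if some $s\in S$ lay in $(f(N):_R M_2)$ then $sM_2\subseteq f(N)$, and pulling back through the surjection $f$ (using $\ker f\subseteq N$) would give $sM_1\subseteq N$, contradicting $(N:_R M_1)\cap S=\emptyset$. Then, letting $s$ be an $S$-element of $N$, I would take non-units $a,b\in R$ and $y\in M_2$ with $aby\in f(N)$; choosing a preimage $x\in M_1$ with $f(x)=y$, surjectivity and $\ker f\subseteq N$ give $abx\in N$, so $sab\in(N:_R M_1)$ or $sx\in M_1\text{-}rad(N)$. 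Applying $f$ and using Lemma~\ref{lemf}(1) in the first case and Lemma~\ref{lemf}(3) in the second (this is where $\ker f\subseteq N$ is essential) yields $sab\in(f(N):_R M_2)$ or $sy=s f(x)\in M_2\text{-}rad(f(N))$, so $s$ is an $S$-element of $f(N)$.

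\medskip

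For part (ii), the non-triviality condition $(f^{-1}(K):_R M_1)\cap S=\emptyset$ follows from Lemma~\ref{lemf}(2), since $(K:_R M_2)\subseteq(f^{-1}(K):_R M_1)$ would be the wrong direction; instead one argues that if $s\in S\cap (f^{-1}(K):_R M_1)$ then $sM_1\subseteq f^{-1}(K)$, hence $sM_2=s f(M_1)=f(sM_1)\subseteq K$ by surjectivity, contradicting $(K:_R M_2)\cap S=\emptyset$. Now let $s$ be an $S$-element of $K$, take non-units $a,b\in R$ and $x\in M_1$ with $abx\in f^{-1}(K)$; then $ab f(x)=f(abx)\in K$, so $sab\in(K:_R M_2)$ or $s f(x)\in M_2\text{-}rad(K)$. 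The first case gives $sab\in(f^{-1}(K):_R M_1)$ by Lemma~\ref{lemf}(2), and the second gives $sx\in f^{-1}(M_2\text{-}rad(K))=M_1\text{-}rad(f^{-1}(K))$ by Lemma~\ref{lemf}(4). In both cases $s$ works, so $f^{-1}(K)$ is $S$-$1$-absorbing primary.

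\medskip

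I expect the routine part to be the verification of the two-case argument, which is essentially bookkeeping with Lemma~\ref{lemf}. The only genuinely delicate point is the role of the hypothesis $\ker f\subseteq N$ in part (i): it is needed both to lift a relation $aby\in f(N)$ back to $abx\in N$ (so that the $S$-$1$-absorbing primary property of $N$ can be invoked) and to apply Lemma~\ref{lemf}(3), which fails without it. One should be careful to state this hypothesis explicitly in (i); part (ii) needs no such restriction. A secondary point worth a sentence is confirming that $a,b$ stay non-units in $R$ under both constructions — but since we never change the ring $R$, this is immediate.
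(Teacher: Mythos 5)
Your proposal is correct and takes essentially the same route as the paper's proof: in both parts one lifts or pushes the relation through $f$, applies the $S$-$1$-absorbing primary property of $N$ (resp.\ $K$), and transfers the conclusion via Lemma~\ref{lemf}, with $\ker f\subseteq N$ used exactly where you say it is. The only (harmless) deviation is that you derive the disjointness condition $(f(N):_{R}M_{2})\cap S=\emptyset$ from $(N:_{R}M_{1})\cap S=\emptyset$ and $\ker f\subseteq N$, whereas the paper simply assumes it as a hypothesis in part (1).
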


\label{p/}

\begin{enumerate}
	\item Suppose that $\left(  f\left(  N\right)  :_{R}M_{2}\right)  \cap
	S=\emptyset$. If $N$ is an $S$-1-absorbing primary submodule of $M_{1}$
	containing $Ker(f)$, then $f(N)$ is an $S$-1-absorbing primary submodule of
	$R_{2}.$
	
	\item If $K$ is an $S$-1-absorbing primary submodule of $M_{2}$, then
	$f^{-1}(K)$ is an $S$-1-absorbing primary submodule of $M_{1}.$
\end{enumerate}

\begin{proof}
	(1) Let $a,b$ be non-unit elements of $R$ and $m\in M$ with $abm\in f(N)$. Say
	$m=f(m_{1})$ for some $m_{1}\in M_{1}$. Since $abf(m_{1})\in f(N)$ and
	$Ker(f)\subseteq I$, we have $abm_{1}\in N$ which implies that there exists an
	$s\in S$ such that $sab\in(N:_{R}M_{1})$ or $sm\in M_{1}$-$rad(N)$. Thus,
	$sab\in(N:_{R}M_{1})\subseteq f((N):_{R}M_{2})$ or $sf(m)\in f(N)\subseteq
	M_{2}$-$rad(f(N))$ by Lemma \ref{lemf}.
	
	(2) First, we show that $\left(  f^{-1}\left(  K\right)  :_{R}M_{1}\right)
	\cap S=\emptyset.$ Assume that $r\in\left(  f^{-1}\left(  K\right)  :_{R}%
	M_{1}\right)  \cap S$. Then $rM_{1}\subseteq f^{-1}\left(  K\right)  $ implies
	$rM_{2}=rf(M_{1})\subseteq f(f^{-1}\left(  K\right)  )\subseteq K$, and hence
	$r\in(K:_{R}M_{2})$, a contradiction. Let $a$ and $b$ be non-unit elements of
	$R$ and $m\in M_{1}$ with $abm\in f^{-1}(K).$ Then $abf(m)\in K$ which implies
	that there exists $s\in S$ such that $sab\in(K:_{R}M_{2})$ or $sf(m)\in
	M_{2}$-$rad(K)$. Thus, we conclude $sab\in(K:_{R}M_{2})\subseteq(f^{-1}%
	(K):_{R}M_{1})$ or $sm\in f^{-1}\left(  M_{2}\text{-}rad(K)\right)
	=M_{1}$-$rad(f^{-1}(K))$ by Lemma \ref{lemf}. Therefore, $f^{-1}(K)$ is an
	$S$-1-absorbing primary submodule of $M_{1}.$
\end{proof}

As a consequence of Theorem \ref{f}, we have the following result.

\begin{corollary}
	\label{c/}Let $S$ be a multiplicatively closed subset of $R$ and $N,$ $K$ be
	submodules of $R$-module $M$.
\end{corollary}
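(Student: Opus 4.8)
The plan is to obtain this as a direct specialization of Proposition~\ref{f} to the canonical epimorphism $\pi\colon M\rightarrow M/N$, which is an $R$-module epimorphism with $\ker\pi=N$. Before invoking it, I would record two routine facts valid whenever $N\subseteq K$: namely $\pi(K)=K/N$ and $\pi^{-1}(K/N)=K$, together with the identity $(K/N:_{R}M/N)=(K:_{R}M)$. The last identity shows that $(K:_{R}M)\cap S=\emptyset$ if and only if $(\pi(K):_{R}M/N)\cap S=\emptyset$, so the admissibility hypotheses on the two sides match automatically.

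For the implication ``$K$ is $S$-$1$-absorbing primary in $M$ $\Rightarrow$ $K/N$ is $S$-$1$-absorbing primary in $M/N$'' (for $N\subseteq K$): from $(K:_{R}M)\cap S=\emptyset$ and the colon identity we get $(\pi(K):_{R}M/N)\cap S=\emptyset$, and since $\ker\pi=N\subseteq K$, Proposition~\ref{f}(1) gives at once that $\pi(K)=K/N$ is $S$-$1$-absorbing primary in $M/N$.

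For the converse, suppose $N\subseteq K$ and $K/N$ is $S$-$1$-absorbing primary in $M/N$. Apply Proposition~\ref{f}(2) with the submodule $K/N$ of $M/N$: it yields that $\pi^{-1}(K/N)$ is $S$-$1$-absorbing primary in $M$, and $\pi^{-1}(K/N)=K$. The compatibility of the $M$-radical with the pullback, $M\text{-}rad(K)=\pi^{-1}\bigl(M/N\text{-}rad(K/N)\bigr)$, which is used implicitly inside Proposition~\ref{f}(2), is exactly Lemma~\ref{lemf}(4), so nothing new is needed.

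The one place to be slightly careful is the colon-ideal bookkeeping --- checking $(K/N:_{R}M/N)=(K:_{R}M)$ and the radical identity of Lemma~\ref{lemf}(4) --- but these are standard and the latter is already isolated for us; apart from that, the corollary is an immediate consequence of Proposition~\ref{f}, so I do not expect a genuine obstacle.
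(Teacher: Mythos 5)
Your proposal covers only the first assertion of the corollary. For that part it is correct and in fact a little cleaner than the paper's argument: the forward direction is, as you say, Proposition~\ref{f}(1) applied to the canonical epimorphism $\pi\colon M\rightarrow M/N$ with $\ker\pi=N\subseteq K$ (the paper does exactly this, modulo the relabelling of $N$ and $K$), and for the converse you invoke Proposition~\ref{f}(2) with the same $\pi$, which is legitimate because $\pi$ is an epimorphism and $\pi^{-1}(K/N)=K$. The paper instead reproves the converse by a direct element computation (from $abm\in K$ pass to $ab(m+N)\in K/N$ and pull back using $(K/N:_{R}M/N)=(K:_{R}M)$ and $M/N$-$rad(K/N)=M$-$rad(K)/N$). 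Both routes work, and your colon-ideal bookkeeping is the right thing to check.

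What is missing is the second assertion of the corollary: if $N$ is an $S$-$1$-absorbing primary submodule of $M$ with $(N:_{R}K)\cap S=\emptyset$, then $N\cap K$ is an $S$-$1$-absorbing primary submodule of $K$. This does not follow from the quotient map at all; the relevant morphism is the inclusion $i\colon K\rightarrow M$, which is not an epimorphism, so Proposition~\ref{f}(2) cannot be quoted verbatim (its proof uses surjectivity both to deduce $(i^{-1}(N):_{R}K)\cap S=\emptyset$ and to apply Lemma~\ref{lemf}(4)). You would need a separate argument: verify $(N\cap K:_{R}K)\cap S=\emptyset$ directly from $(N:_{R}K)\cap S=\emptyset$, and, given $abm\in N\cap K$ with $a,b$ non-unit and $m\in K$, use the $S$-element of $N$ to obtain $sab\in(N:_{R}M)\subseteq(N\cap K:_{R}K)$ or $sm\in M$-$rad(N)\cap K$, and then justify why the latter is contained in $K$-$rad(N\cap K)$. (This last containment is precisely the step the paper itself glosses over by citing Proposition~\ref{f}(2) for a non-surjective map, so it deserves an explicit check rather than a citation.)
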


\begin{enumerate}
	\item Let $K\subseteq N$. Then $N$ is an $S$-1-absorbing primary submodule of
	$M$ if and only if $N/K$ is an $S$-1-absorbing primary submodule of $M/K$.
	
	\item If $N$ is an $S$-1-absorbing primary submodule of $M$ with
	$(N:_{R}K)\cap S=\emptyset$, then $N\cap K$ is an $S$-1-absorbing primary
	submodule of $K$.
\end{enumerate}

\begin{proof}
	(1) It is clear that $\left(  N/K:_{R}M/K\right)  \cap S=\emptyset$. Consider
	the canonical epimorphism $\pi:M\rightarrow M/K$ in Proposition \ref{f} (1).
	Then $N/K$ is an $S$-1-absorbing primary submodule of $M/K$. For the converse
	part, suppose that $abm\in N$ for some non-unit elements $a$ and $b$ of $R$,
	$m\in M.$ Hence $ab(m+K)\in N/K$ which yields that there exists a fixed $s\in
	S$ such that $sab\in(N/K:_{R}M/K)$ or $s(m+K)\in M/K$-$rad(N/K)=M$-$rad(N)/K.$
	Therefore $sab\in(N:_{R}M)$ or $sm\in M$-$rad(N),$ so we are done.
	
	(2) Consider that the injection $i:K\rightarrow M$ defined by $i(k)=k$ for all
	$k\in K$. It is easy to see that $(i^{-1}(N):K)\cap S=\emptyset$. Then
	$i^{-1}(N)=N\cap K$ is is an $S$-1-absorbing primary submodule of $K$ by
	Proposition \ref{f} (2).
\end{proof}

Next, we show that if a ring $R$ admits an $S$-1-absorbing primary ideal that
is not an $S$-primary ideal, then $R$ is a quasilocal ring. First, recall the
following lemma.

\begin{lemma}
	\cite{ece1}\label{Lq} Let $R$ be a ring. Suppose that for every non-unit
	element $w$ of $R$ and for every unit element $u$ of $R$, we have $w+u$ is a
	unit element of $R$. Then $R$ is a quasilocal ring.
\end{lemma}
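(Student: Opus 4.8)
The plan is to prove that the set $\mathfrak{m}$ of all non-unit elements of $R$ is an ideal. Once this is done, quasilocality is immediate: $\mathfrak{m}$ is proper because $1\notin\mathfrak{m}$, and every proper ideal of $R$ consists entirely of non-units, hence is contained in $\mathfrak{m}$; so $\mathfrak{m}$ is the unique maximal ideal of $R$.

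First I would dispose of the easy closure property. The set $\mathfrak{m}$ contains $0$, since $R$ has nonzero identity so $0$ is not a unit. Moreover, for any non-unit $w$ and any $r\in R$, the product $rw$ is again a non-unit, because if $rw$ were a unit then in the commutative ring $R$ the element $w$ would be invertible. Thus $\mathfrak{m}$ is closed under multiplication by arbitrary elements of $R$, and it remains only to check closure under addition.

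The heart of the argument is this addition step, and it is where the hypothesis is used. Suppose, toward a contradiction, that $w_1,w_2\in\mathfrak{m}$ but $w_1+w_2=u$ is a unit. Then $w_1=u+(-w_2)$. Since $w_2$ is a non-unit, so is $-w_2$ (an element is a unit exactly when its negative is). Applying the hypothesis to the non-unit $-w_2$ and the unit $u$, we get that $u+(-w_2)=w_1$ is a unit, contradicting $w_1\in\mathfrak{m}$. Hence $w_1+w_2\in\mathfrak{m}$, and $\mathfrak{m}$ is an ideal.

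I do not expect a genuine obstacle: the only points requiring care are invoking the hypothesis in exactly the stated form (non-unit plus unit is a unit) and observing that passing from $w_2$ to $-w_2$ preserves non-unit status. After $\mathfrak{m}$ is identified as an ideal, one concludes by the standard remark that any maximal ideal is proper, hence contained in $\mathfrak{m}$, hence equal to $\mathfrak{m}$ by maximality, so $R$ is quasilocal.
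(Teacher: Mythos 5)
Your proof is correct: showing that the set of non-units is closed under addition (via the hypothesis applied to the non-unit $-w_2$ and the unit $w_1+w_2$) and under multiplication by arbitrary ring elements identifies it as the unique maximal ideal. The paper itself gives no proof, deferring to the cited reference, and your argument is exactly the standard one used there.
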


\begin{theorem}
	\label{Tq} Let $R$ be a ring. If there is an $S$-1-absorbing primary ideal
	which is not an $S$-primary, then $R$ is a quasilocal ring.
\end{theorem}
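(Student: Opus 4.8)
The plan is to mimic the classical argument that produces quasilocality from a deviant absorbing ideal, and then invoke Lemma \ref{Lq}. So let $I$ be an $S$-$1$-absorbing primary ideal of $R$ with $S$-element $s\in S$ that fails to be $S$-primary. By contrapositive, I want to show that for every non-unit $w\in R$ and every unit $u\in U(R)$, the sum $w+u$ is a unit; then $R$ is quasilocal by Lemma \ref{Lq}.

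First I would extract from the failure of the $S$-primary property a concrete witness: there exist $a\in R$ and $m\in R$ (thinking of $R$ as an $R$-module) with $am\in I$ but $sa\notin\sqrt{I}$ and $sm\notin I$. Note $a$ cannot be a unit (else $m\in I$, contradicting $sm\notin I$ since $s\cdot 1$ is a multiple issue — more carefully, $a$ a unit gives $m=a^{-1}am\in I$ so $sm\in I$), and similarly $m$ cannot be a unit (else $a\in I\subseteq\sqrt I$, so $sa\in\sqrt I$). Also $a$ is a non-unit with $sa\notin\sqrt I$; in particular $a\notin\sqrt I$ and $a^2\notin\sqrt I$ as well.

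Next, the heart of the argument: suppose for contradiction that $w+u$ is a non-unit for some non-unit $w$ and some unit $u$. I would like to feed a product of three non-units into $I$ and derive a contradiction with the choice of $a,m$. The natural candidate is to write $m$-related or $a$-related products using $w$ and $w+u$. Concretely, from $am\in I$ I have $a\cdot w\cdot m\in I$ and $a\cdot(w+u)\cdot m\in I$ (after checking $w$, $w+u$, and $a$ are all non-units, and that $m$ — or rather the element playing the module role — is non-unit; if $m$ happens to be a unit we already excluded that case, but we may instead need to run the argument on the ideal side). Applying the $S$-$1$-absorbing primary property to each: from $a\cdot w\cdot m\in I$ we get $saw\in I$ or $sm\in\sqrt I$; the latter is not immediate from $sm\notin I$, so I must be more careful and instead push everything to land in $I$ directly, using that $I$ is $1$-absorbing primary to force the "$ab\in I$" branch by arranging the third factor to be a non-unit outside $\sqrt I$ — which is exactly what $a$ (or $a^2$, or $m$ after swapping roles) provides.

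The main obstacle I anticipate is the bookkeeping of \emph{which} of the two alternatives "$sab\in I$" or "$sc\in\sqrt I$" fires in each application, and ensuring the branch we do not want is blocked by the standing hypotheses $sa\notin\sqrt I$, $sm\notin I$. The clean way around this is: apply the $S$-$1$-absorbing primary condition to $(w)(w+u)a\cdot$ (something), or to $w(w+u)$ times the witness; since $w(w+u)$ need not be well-behaved, I expect the real trick is to use that $u$ is a unit so $w(w+u)^{-1}$-type manipulations are illegal, and instead observe that if both $w$ and $w+u$ were non-units then $w\cdot m$ and $(w+u)\cdot m$ both generate the failure, and adding/subtracting gives $um\in I+I=I$ up to the $s$-twist, whence $sm\in I$ (as $u$ is a unit), contradicting the choice of $m$. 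Assembling: from $a w m\in I$ conclude $saw\in I$ or $sm\in\sqrt I$; from $a(w+u)m\in I$ conclude $sa(w+u)\in I$ or $sm\in\sqrt I$. If $sm\in\sqrt I$ in either case, combine with $am\in I$ and a further application to reach $sa\in\sqrt I$ or $s^2m\in\sqrt I$, and iterate/clean up using that $\sqrt I$ is $S$-prime by Lemma \ref{lrad}; otherwise $saw\in I$ and $sa(w+u)\in I$, so $sau\in I$, hence (as $u$ is a unit) $sa\in I\subseteq\sqrt I$, again contradicting $sa\notin\sqrt I$. Either way we contradict the deviance of $I$, so no such non-unit $w+u$ exists, and Lemma \ref{Lq} finishes the proof.
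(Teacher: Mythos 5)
Your overall strategy is the same as the paper's: extract a witness $(a,m)$ of the failure of $S$-primariness, multiply it by an arbitrary non-unit $w$ and by $w+u$, apply the $S$-$1$-absorbing primary condition twice, subtract to isolate the unit $u$, and finish with Lemma \ref{Lq}. The setup is fine (including your checks that $a$ and $m$ must be non-units). But the argument you actually assemble has a genuine gap: you factor $awm$ as $(aw)\cdot m$, so the two alternatives are $saw\in I$ or $sm\in\sqrt{I}$, and the second alternative is \emph{not} blocked by your standing hypotheses. You only know $sm\notin I$ and $sa\notin\sqrt{I}$; the conclusion $sm\in\sqrt{I}$ is perfectly consistent with both, so in that branch no contradiction is available. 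The proposed repair --- ``combine with $am\in I$ and a further application \dots\ iterate/clean up using that $\sqrt{I}$ is $S$-prime'' --- does not close this: from $sm\in\sqrt{I}$ and $am\in I$ nothing forces $sa\in\sqrt{I}$ or $sm\in I$, and Lemma \ref{lrad} gives no traction on a single element of $\sqrt{I}$.

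The fix is to put the \emph{other} witness element in the third slot, which is exactly what the paper's proof does. Since $sa\notin\sqrt{I}$, apply the condition to $w\cdot m\cdot a\in I$ read as $(wm)\cdot a$ (all three factors are non-units): the alternative ``$sa\in\sqrt{I}$'' is now blocked, so $swm\in I$; likewise $s(w+u)m\in I$. Subtracting gives $sum\in I$, hence $sm\in I$ because $u$ is a unit, contradicting $sm\notin I$. Your middle paragraph gestures at precisely this endgame (``adding/subtracting gives \dots\ $sm\in I$''), but the version you finally write down uses the wrong factorization and therefore leaves the $sm\in\sqrt{I}$ branch open. With that single reorientation your argument becomes the paper's proof.
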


\begin{proof}
	Let $s^{\prime}\in S$ be an $S$-element of $J$ which is not $S$-primary ideal
	of $R$. Then there exist non-unit elements $a,b\in R$ with $ab\in J$
	satisfying $sa\notin J$ and $sy\notin\sqrt{J}$ for all $s\in S$. Choose $r$
	$\in R\backslash U(R)$. Hence $rab\in J$ which yields $s^{\prime}ra\in J$. We
	show that $r+u\in U(R)$ for any $u\in U(R)$. Let $u\in U(R)$. Assume that
	$r+u$ $\in R\backslash U(R)$. Then $(r+u)ab\in J$ and $s^{\prime}y\notin
	\sqrt{I}$ implies that $s^{\prime}(r+u)a\in I$. Now, since $s^{\prime}ra\in
	I,$ we get $s^{\prime}ua\in I$, and so $s^{\prime}a\in J$, a contradiction.
	Therefore, $r+u$ $\in U(R)$ and $R$ is a quasilocal ring by Lemma \ref{Lq}.
\end{proof}

As a direct result of Theorem \ref{Tq}, we have the following corollary.

\begin{corollary}
	\label{cq} Let $I$ be an ideal of a non-quasi local ring $R$ with $I\cap
	S=\emptyset$. Then $I$ is an $S$-1-absorbing primary ideal of $R$ if and only
	if $I$ is an $S$-primary ideal of $R$.
\end{corollary}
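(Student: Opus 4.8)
The plan is to read Corollary~\ref{cq} as nothing more than a repackaging of Theorem~\ref{Tq}, so I would split the equivalence into its two implications and verify each in a couple of lines, expecting no new computation. The forward implication ``$I$ is $S$-$1$-absorbing primary $\Rightarrow$ $I$ is $S$-primary'' is exactly the contrapositive of Theorem~\ref{Tq}, and the backward implication ``$I$ is $S$-primary $\Rightarrow$ $I$ is $S$-$1$-absorbing primary'' has already been established in the proposition preceding Lemma~\ref{lrad}.

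For the forward direction I would argue by contradiction. Suppose $R$ is not quasilocal, $I\cap S=\emptyset$, and $I$ is an $S$-$1$-absorbing primary ideal of $R$ that fails to be $S$-primary. Then $I$ is precisely an $S$-$1$-absorbing primary ideal of $R$ which is not $S$-primary, so Theorem~\ref{Tq} applies and forces $R$ to be a quasilocal ring, contradicting the hypothesis. Hence $I$ must be $S$-primary. Equivalently, one states the contrapositive of Theorem~\ref{Tq} directly: in a non-quasilocal ring, every $S$-$1$-absorbing primary ideal is automatically $S$-primary.

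For the backward direction, observe that $R$ is a finitely generated faithful multiplication module over itself (it is cyclic, with zero annihilator, and every ideal $I$ satisfies $I=IR$). Thus an $S$-primary ideal $I$ with $I\cap S=\emptyset$ is an $S$-primary submodule of the $R$-module $R$, and by the proposition stated just before Lemma~\ref{lrad} every $S$-primary submodule of a finitely generated faithful multiplication module is $S$-$1$-absorbing primary; applied here, $I$ is an $S$-$1$-absorbing primary submodule of $R$, i.e.\ an $S$-$1$-absorbing primary ideal of $R$. This direction requires no hypothesis on $R$; if one prefers, it can also be checked straight from the definitions, mimicking the classical observation that a primary ideal is $1$-absorbing primary.

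I do not expect a genuine obstacle here: all the content sits in Theorem~\ref{Tq}, which is already proved, and the corollary only combines it with a known implication. The only point requiring a little care is bookkeeping of the standing hypothesis $I\cap S=\emptyset$, which is exactly what is needed for both ``$S$-$1$-absorbing primary ideal'' and ``$S$-primary ideal'' to be defined for $I$; since this is part of the statement, both notions are legitimately available for the same $I$ and the equivalence is meaningful.
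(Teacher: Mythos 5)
Your proof is correct and is exactly the argument the paper intends (the paper states the corollary as ``a direct result of Theorem~\ref{Tq}'' without writing out a proof): the forward implication is the contrapositive of Theorem~\ref{Tq}, and the backward implication is the already-established fact that $S$-primary implies $S$-$1$-absorbing primary, obtained either from the proposition preceding Lemma~\ref{lrad} with $M=R$ or directly from the definitions by writing $abc=c\cdot(ab)\in I$. Your bookkeeping of the hypothesis $I\cap S=\emptyset$ and the remark that the backward direction needs no assumption on $R$ are both accurate.
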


\begin{lemma}
	\label{d}Let $S$ be a multiplicatively closed subset of $R$. If $N$ is an
	$S$-1-absorbing primary submodule of a finitely generated multiplication
	$R$-module $M$. Then the following statements hold.
\end{lemma}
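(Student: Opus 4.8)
The substantive content of this lemma is that $(N:_{R}M)$ is an $S$-$1$-absorbing primary ideal of $R$ (this is the fact invoked in the proof of Theorem~\ref{char2}), and I would prove this first; any further items in the list should reduce to the same argument. To begin, since $N$ is $S$-$1$-absorbing primary we have $(N:_{R}M)\cap S=\emptyset$ by hypothesis, so $(N:_{R}M)$ is in particular a proper ideal — otherwise $N=M$ and $S\subseteq(N:_{R}M)$, contradicting disjointness. Fix an $S$-element $s\in S$ of $N$; the claim will be that this same $s$ is an $S$-element of $(N:_{R}M)$.

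For the defining test, let $a,b,c\in R\backslash U(R)$ with $abc\in(N:_{R}M)$, equivalently $abcM\subseteq N$. Set $I=aR$, $J=bR$ and $K=cM$: then $I$ and $J$ are proper ideals of $R$ because $a$ and $b$ are non-units, $K$ is a submodule of $M$, and $IJK=abcM\subseteq N$. Applying the equivalence (1)$\Leftrightarrow$(4) of Theorem~\ref{char}, we get either $sIJ\subseteq(N:_{R}M)$, whence $sab\in sIJ\subseteq(N:_{R}M)$, or $sK\subseteq M\text{-}rad(N)$, whence $scM=sK\subseteq M\text{-}rad(N)$, i.e. $sc\in(M\text{-}rad(N):_{R}M)$.

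To close the gap, I would use the identity $(M\text{-}rad(N):_{R}M)=\sqrt{(N:_{R}M)}$, which holds because $M$ is a finitely generated multiplication module \cite[Lemma~2.4]{Hoj}; it converts the second alternative into $sc\in\sqrt{(N:_{R}M)}$. Thus $sab\in(N:_{R}M)$ or $sc\in\sqrt{(N:_{R}M)}$, so $(N:_{R}M)$ is $S$-$1$-absorbing primary with $S$-element $s$. I do not expect a genuine obstacle here: the finitely-generated-multiplication hypothesis is needed only to license the radical identity of \cite[Lemma~2.4]{Hoj}, while the passage from the submodule condition on $N$ to the ideal condition on $(N:_{R}M)$ is precisely what Theorem~\ref{char} is built to supply. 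The single point deserving attention is that $I=aR$ and $J=bR$ must be proper ideals in order to feed them into part~(4) of that theorem, which is exactly why the non-unit assumptions on $a$ and $b$ are indispensable; the non-unit assumption on $c$, by contrast, is there only to match the definition of an $S$-$1$-absorbing primary ideal.
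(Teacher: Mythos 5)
Your argument for the first assertion of the lemma --- that $(N:_{R}M)$ is an $S$-$1$-absorbing primary ideal of $R$ --- is correct and is essentially the paper's own proof of that item: both pass from $abc\in(N:_{R}M)$ to $ab(cM)\subseteq N$, invoke the ideal/submodule form of Theorem~\ref{char}, and then convert $scM\subseteq M$-$rad(N)$ into $sc\in\sqrt{(N:_{R}M)}$ via the identity $(M\text{-}rad(N):_{R}M)=\sqrt{(N:_{R}M)}$ of \cite[Lemma 2.4]{Hoj}. Your remarks about properness of $aR$, $bR$ and the role of the finitely generated multiplication hypothesis are all accurate.

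The gap is that the lemma asserts three statements, and your claim that the remaining items ``reduce to the same argument'' is not correct for item (2). That item says that $(N:_{R}m)$ is an $S$-$1$-absorbing primary ideal whenever $(N:_{R}m)\cap S=\emptyset$ for some $m$ lying outside every prime submodule of $M$ containing $N$. The first half of its proof is indeed parallel to yours (from $abcm\in N$ one gets $sab\in(N:_{R}m)$ or $sc\in(M\text{-}rad(N):_{R}m)$), but closing it requires the \emph{element-level} identity $(M\text{-}rad(N):_{R}m)=\sqrt{(N:_{R}m)}$, which is not \cite[Lemma 2.4]{Hoj} and does not follow from it. The paper spends the bulk of its proof of (2) establishing exactly this, writing $N=IM$ and each prime submodule containing $N$ as $QM$ for a prime ideal $Q\supseteq I$, and using the results of El-Bast and Smith on multiplication modules together with the hypothesis $m\notin\mathfrak{P}$ to rule out the alternative $m\in QM$ and conclude $a\in\bigcap Q=\sqrt{I}$. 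Without that hypothesis on $m$ the reverse inclusion can fail, so this is a genuinely separate argument. Item (3), that $(N:_{M}r)$ is an $S$-$1$-absorbing primary \emph{submodule} for $r\notin(N:_{R}M)$, is also a different statement; its proof is short (from $abrm\in N$ one gets $sab\in(N:_{R}M)\subseteq((N:_{M}r):_{R}M)$ or $srm\in M\text{-}rad(N)\subseteq M\text{-}rad((N:_{M}r))$), but it still needs to be written down. As it stands your proposal proves one of the three claims.
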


\begin{enumerate}
	\item $(N:_{R}M)$ is an $S$-1-absorbing primary ideal of $R$.
	
	\item If $(N:_{R}m)\cap S=\emptyset$ for some $m\in M\backslash\mathfrak{P}$
	where $\mathfrak{P}$ is any prime submodule of $M$ containing $N,$ then
	$(N:_{R}m)$ is an $S$-1-absorbing primary ideal of $R$.
	
	\item If $((N:_{M}r):_{R}M)\cap S=\emptyset$ for some $r\in R\backslash
	(N:_{R}M)$, then $(N:_{M}r)$ is an $S$-1-absorbing primary submodule of $M$.
\end{enumerate}

\begin{proof}
	(1) Let $a,b,c$ $\in R\backslash U(R)$ and $abc\in(N:_{R}M)$. Then
	$ab(cM)\subseteq N$ implies that there exists an $s\in S$ such that
	$sab\in(N:_{R}M)$ or $scM\subseteq M$-$rad(N)$ by Theorem \ref{char}. Thus we
	have $sab\in(N:_{R}M)$ or $sc\in (M$-$rad(N):_{R}M)  =\sqrt
	{(N:_{R}M)}$ and $(N:_{R}M)$ is an $S$-1-absorbing primary ideal of $R$.
	
	{(2) Let $a,b,c$ $\in R\setminus U(R)$ and $abc\in(N:_{R}m)$. Then we have
		$abcm\in N.$ As $N$ is an $S$-1-absorbing primary submodule, there exists an
		$s\in S$ such that $sab\in(N:_{R}M)$ or $scm\in M$-$rad(N).$ This indicates that
		$sab\in(N:_{R}m)$ or $sc\in(M$-$rad(N):_{R}m).$ For the rest of the proof, we
		only need to show that $(M$-$rad(N):_{R}m)=\sqrt{(N:_{R}m)}.$ The inclusion
		$\sqrt{(N:_{R}m)}\subseteq(M$-$rad(N):_{R}m)$ is satisfied by (\cite{MRS}, Lemma
		2.1). For the reverse inclusion, let $a\in(M$-$rad(N):_{R}m)$ which implies
		$am\in M$-$rad(N)=\bigcap_{N\subseteq P}P$ where $P$ is any prime submodule of
		$M$ that contains $N.$ Since $M$ is multiplication, there exists an ideal $I$
		of $R$ such that $N=IM$ and a prime ideal $Q$ of $R$ such that $P=QM.$ It
		follows that $am\in\bigcap_{N\subseteq P}P=\bigcap_{IM\subseteq QM}QM.$ Then,
		we obtain $am\in QM$ for all prime ideals $Q$ containing $I$ by using
		(\cite{El}, Theorem 3.2). This means that $a\in Q$ or $m\in QM$ for all primes
		$Q$ containing $I$ by (\cite{El}, Lemma 2.10) and hence $a\in\bigcap
		_{I\subseteq Q}Q=\sqrt{I}$ which implies $a^{n}\in I$ for some $n\in
		\mathbb{N}.$ We conclude that $a^{n}m\in IM=N$, namely $a\in\sqrt{(N:_{R}m)},$
		which completes the proof. }
	
	(3) Suppose that $abm\in(N:_{M}r)$ for some non-unit $a,b\in R$ and $m\in M$.
	Then $abrm\in N$ which implies that there exists an $s\in S$ such that
	$sab\in\left(  N:_{R}M\right)  \subseteq((N:_{M}r):_{R}M)$ or $srm\in
	M$-$rad(N)\subseteq M$-$rad((N:_{M}r))$. Thus $(N:_{M}r)$ is an $S$-1-absorbing
	primary submodule of $M$.
\end{proof}

\begin{theorem}
	Let $M$ be a finitely generated multiplication $R$-module where $R$ is a
	non-quasi local ring and $N$ be a submodule of $M$ with $(N:_{R}M)\cap
	S=\emptyset$. Then $N$ is an $S$-1-absorbing primary submodule of $M$ if and
	only if $N$ is an $S$-primary submodule of $M.$
\end{theorem}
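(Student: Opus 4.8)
The plan is to prove both implications separately, using the quasilocal/non-quasilocal dichotomy already established for ideals. The key observation is that under the hypotheses ($M$ finitely generated multiplication, $R$ non-quasi local, $(N:_R M)\cap S=\emptyset$), Lemma \ref{d}(1) tells us that if $N$ is $S$-$1$-absorbing primary then $(N:_R M)$ is an $S$-$1$-absorbing primary ideal of $R$, and Corollary \ref{cq} then upgrades this to: $(N:_R M)$ is an $S$-primary ideal of $R$, since $R$ is non-quasi local.

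For the forward direction, I would start with $abm\in N$ for some $a,b\in R$ and $m\in M$ (note: here $a,b$ range over all of $R$, not just non-units, since we want $S$-primary in the end). If either $a$ or $b$ is a unit, say $a\in U(R)$, then $b(am)\in N$ with $am$ an arbitrary element of $M$; I can reduce to showing the $S$-primary absorbing condition directly, or better, work with the ideal $(N:_R m)$ and $ab$. The cleanest route: from $abm\in N$ we get $(ab)(Rm)\subseteq N$, so $ab\in (N:_R m)$. Actually the most efficient approach is to use that $(N:_R M)$ is $S$-primary and $M$ is faithful multiplication is NOT assumed here, only multiplication — so I need to be a little careful. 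Let me instead argue: $ab\cdot m\in N$; consider the product of ideals $(abR)\cdot Rm\subseteq N$. Since $M$ is multiplication, $Rm = Jm$-type manipulations are available, but the crispest path is to invoke Theorem \ref{char}(4) with the degenerate factorization, or simply handle the unit case by hand (if $a\in U(R)$, replace the triple by $1\cdot(ab)\cdot m$... but $ab$ might be a unit too). When both $a,b$ are non-units, apply the $S$-$1$-absorbing primary hypothesis directly to get $s\in S$ with $sab\in(N:_R M)$ or $sm\in M\text{-}rad(N)$; the first case gives the $S$-primary conclusion with $a$ (or $b$) in place, since $sab\in(N:_R M)\subseteq\sqrt{(N:_R M)}$. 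When exactly one of $a,b$ is a unit, WLOG $a\in U(R)$: then $bm' \in N$ where $m'=am$, and I need the $S$-primary condition on the pair $(b,m')$ — here I should use that $(N:_R M)$ is $S$-primary together with $b(Rm')\subseteq N$, i.e. $b\in(N:_R m')$, and relate $(N:_R m')$ to $\sqrt{(N:_R M)}$ via the multiplication hypothesis (as in the proof of Lemma \ref{d}(2), the identity $(M\text{-}rad(N):_R m')=\sqrt{(N:_R m')}$ and $\sqrt{(N:_R M)}\subseteq \sqrt{(N:_R m')}$). The case where both are units is trivial since then $bm\in N$ already forces nothing — wait, we'd just take $s=1$... no: we still need $sb\in\sqrt{(N:_RM)}$ or $sm\in N$; but if $a,b$ both units then $m\in N$, done.

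For the converse, every $S$-primary submodule satisfies the $S$-$1$-absorbing primary condition directly from the definitions: if $abm\in N$ with $a,b$ non-units, set $a'=ab$ (a single ring element) and $m'=m$; then $a'm'\in N$, so there is $s\in S$ with $sa'\in\sqrt{(N:_R M)}$ or $sm'\in N\subseteq M\text{-}rad(N)$. In the first case $sab\in\sqrt{(N:_R M)} = (M\text{-}rad(N):_R M)$ by \cite[Lemma 2.4]{Hoj}, which gives $sabM\subseteq M\text{-}rad(N)$, hence $sab\in(N:_R M)$ only if... hmm, actually $sab\in\sqrt{(N:_RM)}$ does not immediately give $sab\in(N:_RM)$. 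So the converse needs the dichotomy too: since $R$ is non-quasi local, an $S$-primary submodule's colon ideal $(N:_RM)$ is $S$-primary hence (trivially) $S$-$1$-absorbing primary, and then Theorem \ref{char2}-type reasoning — but wait, Theorem \ref{char2} needs faithful. I'll instead note that this converse is exactly the content already recorded in the Proposition preceding Lemma \ref{d} combined with nothing extra, OR I give the direct argument: from $abm\in N$ and $N$ being $S$-primary with element $s$, either $s(ab)\in\sqrt{(N:_RM)}$ giving $s\cdot ab\cdot M\subseteq M\text{-}rad(N)$; but I want $sab\in(N:_RM)$. This is the genuine obstacle. I expect the main obstacle to be precisely this: showing that for $S$-primary $N$ the $S$-$1$-absorbing conclusion "$sab\in(N:_RM)$" (not merely "$sab\in\sqrt{(N:_RM)}$") can be reached, which I believe forces using that $(N:_RM)$ is $S$-primary in $R$ with $R$ non-quasi local, then applying Corollary \ref{cq} backwards is not what's needed — rather I should reexamine: perhaps for the converse one does NOT need $sab\in(N:_RM)$ with the same $s$; unwinding the $S$-$1$-absorbing primary definition, in case $sa'\in\sqrt{(N:_RM)}$ with $a'=ab$ non-unit-squared, I can further apply the $S$-primary property iteratively (as in Lemma \ref{lrad}) to push into $(N:_RM)$, or simply accept the standard fact that an $S$-primary ideal is $S$-$1$-absorbing primary (Corollary \ref{cq}, using $R$ non-quasi local) and lift to the module via the multiplication property and Lemma \ref{d}. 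I would therefore structure the converse as: $N$ $S$-primary $\Rightarrow$ $(N:_RM)$ $S$-primary in $R$ $\Rightarrow$ (by Corollary \ref{cq}, $R$ non-quasi local) $(N:_RM)$ is $S$-$1$-absorbing primary $\Rightarrow$ (via Theorem \ref{char} and the multiplication structure, mirroring the proof of the earlier theorem characterizing $S$-$1$-absorbing primary submodules of finitely generated multiplication modules) $N$ is $S$-$1$-absorbing primary.
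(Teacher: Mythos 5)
Your high-level route is the paper's: pass to the colon ideal $(N:_{R}M)$ via Lemma \ref{d}, apply the non-quasilocal dichotomy of Corollary \ref{cq} at the ideal level, and transfer back to $N$ using the finitely generated multiplication hypothesis. However, two steps are left genuinely unfinished. In the forward direction you correctly reach ``$(N:_{R}M)$ is an $S$-primary ideal of $R$,'' but you never get from there to the actual conclusion ``$N$ is an $S$-primary submodule of $M$.'' Your attempted direct verification is set up for the wrong statement: the $S$-primary condition for submodules concerns $am\in N$ for a \emph{single} ring element $a$ (with no unit/non-unit restriction), so starting from $abm\in N$ and case-splitting on which of $a,b$ is a unit does not address it. The missing step is the transfer, valid for finitely generated multiplication modules, between $N$ being $S$-primary and $(N:_{R}M)$ being $S$-primary; the paper simply cites \cite[Theorem 2.17]{Ans} for it. (If you want it from scratch: $am\in N$ gives $aL\subseteq(N:_{R}M)$ where $Rm=LM$, and the ideal-level $S$-primary property then yields $sa\in\sqrt{(N:_{R}M)}$ or $sL\subseteq(N:_{R}M)$, i.e. $sm\in sLM\subseteq N$.)

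The ``genuine obstacle'' you identify in the converse is not one. An $S$-primary ideal $I$ is automatically $S$-$1$-absorbing primary: given $abc\in I$ with $a,b,c$ non-units, apply the defining condition to the factorization $c\cdot(ab)$, with $c$ in the role of the ring element and $ab$ in the role of the ``module'' element; this gives $sc\in\sqrt{I}$ or $s(ab)\in I$ directly, which is exactly the required conclusion --- no iteration and no passage through $\sqrt{I}$ for the product $ab$ is needed. Once this is observed, the chain you finally settle on ($N$ $S$-primary $\Rightarrow$ $(N:_{R}M)$ $S$-primary $\Rightarrow$ $(N:_{R}M)$ $S$-$1$-absorbing primary $\Rightarrow$ $N$ $S$-$1$-absorbing primary) is precisely the paper's proof; for the last arrow the relevant content is the $(3)\Rightarrow(1)$ argument of Theorem \ref{char2}, whose proof uses only that $M$ is finitely generated multiplication (faithfulness enters only in $(2)\Rightarrow(3)$ there), so your hesitation on that point is also unfounded.
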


\begin{proof}
	Suppose that $N$ is a $S$-1-absorbing primary submodule of $M$. Then
	$(N:_{R}M)$ is an $S$-1-absorbing primary ideal of $R$ by Lemma \ref{d} and
	$(N:_{R}M)$ is $S$-primary ideal of $R$ by Corollary \ref{cq}. Hence, $N$ is
	an $S$-primary submodule of $M$ by \cite[Theorem 2.17]{Ans}. Conversely, if
	$N$ is an $S$-primary submodule of $M,$ then $(N:_{R}M)$ is an $S$-primary
	ideal of $R$ by \cite[Theorem 2.17]{Ans}$.$ Since every $S$-primary ideal with
	$(N:_{R}M)\cap S=\emptyset$ is an $S$-1-absorbing primary ideal, $(N:_{R}M)$
	is an $S$-1-absorbing primary ideal of $R.$ Thus, $N$ is an $S$-1-absorbing
	primary submodule of $M$ by Theorem \ref{char}.
\end{proof}

In view of Corollary \ref{cq}, we have the following result.

\begin{theorem}
	Let $S_{i}$ be multiplicatively closed subset of ring $R_{i}$ for each
	$i=1,\dots,n$ and $S=S_{1}\times\cdots\times S_{n}$. For an ideal
	$I=I_{1}\times\cdots\times I_{n}$ of $R=R_{1}\times\cdots\times R_{n},$ the
	following are equivalent.
	
	\begin{enumerate}
		\item $I$ is an $S$-1-absorbing primary ideal of $R$.
		
		\item $I$ is an $S$-primary ideal of $R$.
		
		\item $I_{j}$ is an $S_{j}$-primary ideal of $R_{j}$ for some $j\in
		\{1,\dots,n\}$ and $I_{k}\cap S_{k}\neq\emptyset$ for all $k\in\{1,\dots
		,n\}\backslash\{j\}$.
	\end{enumerate}
	
	\begin{proof}
		(1)$\Leftrightarrow$(2) Since $R$ is not quasi-local, $S$-1-absorbing primary
		ideals and $S$-primary ideals coincide by Corollary \ref{cq}.
		(2)$\Leftrightarrow$(3) Follows from \cite[Theorem 2.21]{Ans} considering
		$M_{i}=R_{i}$ as an $R_{i}$-module for all $i\in\{1,\dots,n\}.$
	\end{proof}
\end{theorem}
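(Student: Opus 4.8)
The plan is to first dispose of a degenerate case and then handle the two stated equivalences separately. Observe that each of (1), (2), (3) forces $I\cap S=\emptyset$: conditions (1) and (2) require it by the very definition of $S$-1-absorbing primary / $S$-primary, and in (3) the assumption that $I_{j}$ is $S_{j}$-primary gives $I_{j}\cap S_{j}=\emptyset$, whence $I\cap S=\emptyset$ since a hypothetical element of $I\cap S$ would have its $j$-th coordinate in $I_{j}\cap S_{j}$. Thus if $I\cap S\neq\emptyset$ all three statements are false and there is nothing to prove; from now on assume $I\cap S=\emptyset$, equivalently that $I_{j}\cap S_{j}=\emptyset$ for at least one index $j$.

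For (1)$\Leftrightarrow$(2), the one substantive observation is that $R=R_{1}\times\cdots\times R_{n}$ (with $n\geq 2$ and every $R_{i}$ nonzero, as standing hypotheses of the paper) is never quasilocal: the orthogonal idempotents $e=(1,0,\dots,0)$ and $1-e=(0,1,\dots,1)$ are both non-units, yet $e+(1-e)=1\in U(R)$, so the non-units of $R$ are not closed under addition and hence do not form an ideal. Therefore $R$ is a non-quasi local ring with $I\cap S=\emptyset$, so Corollary~\ref{cq} applies verbatim and yields that $I$ is $S$-1-absorbing primary if and only if $I$ is $S$-primary.

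For (2)$\Leftrightarrow$(3), I would invoke \cite[Theorem 2.21]{Ans} with the choice $M_{i}=R_{i}$ as an $R_{i}$-module, under which $(N_{k}:_{R_{k}}M_{k})=I_{k}$ and the module-theoretic radical condition there becomes exactly $I_{k}\cap S_{k}\neq\emptyset$. If one prefers a self-contained argument: for (2)$\Rightarrow$(3), let $s=(s_{1},\dots,s_{n})\in S$ be an $S$-element of $I$; since $\sqrt{I}=\sqrt{I_{1}}\times\cdots\times\sqrt{I_{n}}$ and $I_{j}\cap S_{j}=\emptyset$ forces $s_{j}\notin\sqrt{I_{j}}$, testing the $S$-primary condition on the pair $a=1-e_{k}$, $m=e_{k}$ (whose product is $0\in I$) rules out the existence of two distinct indices $j\neq k$ with $I_{j}\cap S_{j}=\emptyset=I_{k}\cap S_{k}$, because then $sa$ fails at coordinate $j$ to lie in $\sqrt{I}$ while $sm$ fails at coordinate $k$ to lie in $I$. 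Hence exactly one index $j$ satisfies $I_{j}\cap S_{j}=\emptyset$; restricting the $S$-primary condition to elements supported on the $j$-th coordinate then shows $I_{j}$ is $S_{j}$-primary with $S_{j}$-element $s_{j}$, and $I_{k}\cap S_{k}\neq\emptyset$ for every $k\neq j$. For (3)$\Rightarrow$(2), pick an $S_{j}$-element $t\in S_{j}$ of $I_{j}$ and elements $s_{k}\in I_{k}\cap S_{k}$ for $k\neq j$, and check that the element $s\in S$ with $j$-th coordinate $t$ and $k$-th coordinate $s_{k}$ is an $S$-element of $I$: for $k\neq j$ one automatically has $s_{k}m_{k}\in I_{k}$ and $s_{k}a_{k}\in I_{k}\subseteq\sqrt{I_{k}}$, while coordinate $j$ is controlled by the $S_{j}$-primary condition for $I_{j}$, so splitting on its two outcomes gives $sa\in\sqrt{I}$ or $sm\in I$.

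I expect no real difficulty here: (1)$\Leftrightarrow$(2) is immediate from Corollary~\ref{cq} once non-quasilocality of the finite product is noted, and (2)$\Leftrightarrow$(3) is a citation to \cite{Ans}. Should one want the latter self-contained, the only delicate point is the choice of the test pair $(a,m)=(1-e_{k},e_{k})$, engineered so that $sa$ and $sm$ are simultaneously blocked from $\sqrt{I}$ and from $I$ at the two ``bad'' coordinates $j$ and $k$.
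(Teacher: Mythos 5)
Your proposal is correct and follows essentially the same route as the paper: (1)$\Leftrightarrow$(2) via Corollary~\ref{cq} after noting the product ring is not quasilocal, and (2)$\Leftrightarrow$(3) by citing \cite[Theorem 2.21]{Ans} with $M_i=R_i$. The extra material you supply (the idempotent argument for non-quasilocality, the degenerate case $I\cap S\neq\emptyset$, and the self-contained verification of (2)$\Leftrightarrow$(3) using the test pair $(1-e_k,e_k)$) is sound and merely fills in details the paper leaves implicit.
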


\begin{proposition}
	Let $S$ be a multiplicatively closed subset of $R,$ $M_{1}$, $M_{2}$ be
	$R$-modules and $M=M_{1}\times M$ and $N_{1}$ be a proper submodule of $M_{1}%
	$. If $N=N_{1}\times M_{2}$ is an $S\times0$-1-absorbing primary submodule of
	$R$-module $M$, then $N_{1}$ is an $S$-1-absorbing primary submodule of
	$M_{1}.$
\end{proposition}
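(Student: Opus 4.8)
The plan is to push everything through the product decomposition. We regard $M=M_{1}\times M_{2}$ as an $R\times R$-module via $(r_{1},r_{2})(m_{1},m_{2})=(r_{1}m_{1},r_{2}m_{2})$, so that $S\times 0:=S\times\{0\}$ is a multiplicatively closed subset acting on $M$ and $N=N_{1}\times M_{2}$ is the submodule in the hypothesis. The first thing to record is the pair of identifications $(N:_{R\times R}M)=(N_{1}:_{R}M_{1})\times R$ and $M\text{-}rad(N)=(M_{1}\text{-}rad(N_{1}))\times M_{2}$. The first is immediate; for the second, a proper prime submodule of $M_{1}\times M_{2}$ has the form $P_{1}\times M_{2}$ or $M_{1}\times P_{2}$ with $P_{i}$ prime in $M_{i}$, and one of the second type cannot contain $N_{1}\times M_{2}$ (this would force $M_{2}\subseteq P_{2}$, contradicting properness), so intersecting the surviving primes $P_{1}\times M_{2}$ over all prime $P_{1}\supseteq N_{1}$ gives the formula. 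In particular $(N:_{R\times R}M)\cap(S\times 0)=\emptyset$ yields $(N_{1}:_{R}M_{1})\cap S=\emptyset$, so the compatibility condition demanded in the conclusion comes for free.

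Next I would fix an $S$-element $(s,0)\in S\times 0$ of $N$ and argue that $s$ is an $S$-element of $N_{1}$. Given non-units $a,b\in R$ and $m_{1}\in M_{1}$ with $abm_{1}\in N_{1}$, note that $(a,1)$ and $(b,1)$ are non-units of $R\times R$ (since $a,b\notin U(R)$) and that $(a,1)(b,1)(m_{1},0)=(abm_{1},0)\in N_{1}\times M_{2}=N$. The defining property of $N$, applied with the fixed $(s,0)$, then gives $(sab,0)=(s,0)(a,1)(b,1)\in(N:_{R\times R}M)$ or $(sm_{1},0)=(s,0)(m_{1},0)\in M\text{-}rad(N)$; by the two identifications above these read $sab\in(N_{1}:_{R}M_{1})$ and $sm_{1}\in M_{1}\text{-}rad(N_{1})$ respectively. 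Since $s$ was chosen independently of $a,b,m_{1}$, this is precisely the assertion that $N_{1}$ is $S$-1-absorbing primary in $M_{1}$.

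The only step that is more than bookkeeping is the radical identity $M\text{-}rad(N_{1}\times M_{2})=(M_{1}\text{-}rad(N_{1}))\times M_{2}$; everything else is a direct translation through the product. If one prefers to avoid a hands-on description of the prime submodules of $M_{1}\times M_{2}$, the same conclusion can be reached by running the argument through the canonical projection $M\to M_{1}$, which is an $R\times R$-module epimorphism with kernel $0\times M_{2}\subseteq N$, using the radical behaviour in Lemma \ref{lemf}(3) and an argument parallel to Proposition \ref{f}(1); but the direct route above is shorter.
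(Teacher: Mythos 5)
Your argument is correct, but it takes a different route from the paper. The paper disposes of this proposition in one line: it quotients by $K=\{0\}\times M_{2}\subseteq N$ and invokes Corollary \ref{c/}(1) together with the isomorphisms $N/K\cong N_{1}$ and $M/K\cong M_{1}$ — essentially the "canonical projection" alternative you sketch in your last paragraph. You instead work directly with the product structure over $R\times R$: you compute $(N:_{R\times R}M)=(N_{1}:_{R}M_{1})\times R$, classify the prime submodules of $M_{1}\times M_{2}$ to obtain $M\text{-}rad(N_{1}\times M_{2})=(M_{1}\text{-}rad(N_{1}))\times M_{2}$ (a fact the paper quotes elsewhere from the literature rather than reproving), and then test the defining condition on the non-units $(a,1),(b,1)$ and the element $(m_{1},0)$. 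Your version is longer but buys two things the paper's one-liner glosses over: it makes explicit that $M$ must be read as an $R\times R$-module for "$S\times 0$-1-absorbing primary" to be meaningful (the statement itself says "$R$-module $M$", which is inconsistent), and it handles the translation between the $R\times R$-module condition on the quotient and the $R$-module condition on $M_{1}$ — a translation that Corollary \ref{c/}, stated for a single ring and a single multiplicative set, does not literally cover. The disjointness check $(N_{1}:_{R}M_{1})\cap S=\emptyset$, which you also verify, is likewise left implicit in the paper.
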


\begin{proof}
	Suppose that $N=N_{1}\times M_{2}$ is an $S\times0$-1-absorbing primary
	submodule of $M$. Then $N_{1}\cong N/(\{0\}\times M_{2})$ is an $S$%
	-1-absorbing primary submodule of $M/(\{0\}\times M_{2})\cong M_{1}$ by
	Corollary \ref{c/}.
\end{proof}

\begin{theorem}
	Let $S$ be a multiplicatively close subset of $R,$ $M_{1},$ $M_{2}$ be
	$R$-modules and $M=M_{1}\times M_{2}$ and $N_{1},N_{2}$ be proper submodules
	respectively of $M_{1}$ and $M_{2}.$ Let $M=M_{1}\times M_{2}$,
	$N=N_{1}\times M_{2}$ and $ N^{\prime}=M_{1}\times N_{2}.$ We suppose that
	$(N_{i}:M_{i})\cap S=\emptyset,$ where $i=1,2.$
	
	\begin{enumerate}
		\item $N_{1}$ is an $S$-1-absorbing primary submodule of $M_{1}$ if and only if
		$N$ is an $S\times\{1\}$-1-absorbing primary submodule of $M.$
		
		\item $N_{2}$ is an $S$-1-absorbing primary submodule of $M_{2}$ if and only if
		$N^{\prime}$ is a $\{1\}\times S$-1-absorbing primary submodule of $M.$
	\end{enumerate}
\end{theorem}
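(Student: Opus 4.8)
The two assertions are symmetric under interchanging the two factors, so the plan is to prove (1) in detail and then transcribe the argument with the roles of $M_{1}$ and $M_{2}$ swapped. For (1) I would put $K=\{0\}\times M_{2}$ and reduce the whole statement to Corollary \ref{c/}(1), in the same spirit as the Proposition that immediately precedes this theorem.

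The first step is to record the bookkeeping attached to the quotient by $K$. Since $0\in N_{1}$ we have $K\subseteq N$, and the first-coordinate projection gives an isomorphism $M/K\cong M_{1}$ which carries $N/K$ onto $N_{1}$. Applying Lemma \ref{lemf} to the canonical epimorphism $\pi\colon M\to M/K$ (whose kernel $K$ is contained in $N$, hence in $M$-$rad(N)$), parts (1)--(2) identify $(N:M)$ with $(N_{1}:_{R}M_{1})$ and part (3) identifies $M$-$rad(N)$ with $M_{1}$-$rad(N_{1})$. In particular the hypothesis $(N_{1}:_{R}M_{1})\cap S=\emptyset$ is exactly what forces $(N:M)\cap(S\times\{1\})=\emptyset$, so both sides of the equivalence are well posed.

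Now Corollary \ref{c/}(1) says that $N$ is an $S\times\{1\}$-1-absorbing primary submodule of $M$ if and only if $N/K$ is one in $M/K$; transporting along $M/K\cong M_{1}$, the latter becomes the statement that $N_{1}$ is an $S\times\{1\}$-1-absorbing primary submodule of $M_{1}$, and it remains only to match this with $N_{1}$ being an $S$-1-absorbing primary submodule of $M_{1}$. I would verify this directly from the definitions. If $s\in S$ is an $S$-element of $N_{1}$, then $(s,1)$ serves for the product: a relation $(a,a')(b,b')(m,m')\in N$ forces $abm\in N_{1}$, and then $sab\in(N_{1}:_{R}M_{1})$ (respectively $sm\in M_{1}$-$rad(N_{1})$) lifts to $(s,1)(a,a')(b,b')\in(N:M)$ (respectively $(s,1)(m,m')\in M$-$rad(N)$). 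Conversely, given an $S\times\{1\}$-element $(s,1)$ of $N$ and non-units $a,b\in R$ with $abm\in N_{1}$ for some $m\in M_{1}$, feeding the test elements $(a,a)$, $(b,b)$ and $(m,0)$ into the definition shows that $s$ is an $S$-element of $N_{1}$.

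The step that genuinely needs care is this last translation between the two absorbing-primary conditions: one has to line up ``non-unit of the ambient product ring'' with ``non-unit of $R$'', handle the degenerate sub-cases in which one coordinate is a unit, and, above all, check that a single element works uniformly over all admissible triples $a,b,m$ --- namely $(s,1)$, independently of the data. Once this is settled, (1) is immediate from Corollary \ref{c/}(1), and (2) follows verbatim with $K=M_{1}\times\{0\}$, $M/K\cong M_{2}$, now pairing $\{1\}\times S$ with $S$.
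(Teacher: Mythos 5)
Your reduction of part (1) to Corollary \ref{c/}(1) via $K=\{0\}\times M_{2}$ is a genuinely different route from the paper's, which instead verifies the definition directly on test elements such as $(a,1)(b,1)(m_{1},0)$ and invokes the identity $M$-$rad(N_{1}\times M_{2})=M_{1}$-$rad(N_{1})\times M_{2}$. As a way of organizing the bookkeeping, your quotient argument is cleaner, and the direction ``$N$ is $S\times\{1\}$-$1$-absorbing primary $\Rightarrow$ $N_{1}$ is $S$-$1$-absorbing primary'' goes through exactly as you sketch, since for a non-unit $a\in R$ the element $(a,a)$ is a non-unit of $R\times R$.

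However, the ``degenerate sub-cases in which one coordinate is a unit,'' which you defer as a matter of care, are not a routine verification: they are precisely where the converse implication fails, and no choice of $S$-element repairs it. The element $(1,0)$ is a non-unit of $R\times R$ that acts as the identity on the first coordinate. Taking $a=(1,0)$, $b=(b_{0},0)$ with $b_{0}\in R$ a non-unit, and $m=(m_{0},0)$, the requirement that $(s,1)$ witness $S\times\{1\}$-$1$-absorbing primariness of $N=N_{1}\times M_{2}$ reads: $b_{0}m_{0}\in N_{1}$ implies $sb_{0}\in(N_{1}:_{R}M_{1})$ or $sm_{0}\in M_{1}$-$rad(N_{1})$ --- that is, the $S$-primary condition on $N_{1}$, which is strictly stronger than $S$-$1$-absorbing primary. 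Concretely, let $R=k[[x,y]]$, $S=\{1\}$, $M_{1}=M_{2}=R$, $N_{1}=(x^{2},xy)$ (a $1$-absorbing primary ideal that is not primary) and $N_{2}$ any proper ideal. Then $(1,0)(x,0)(y,0)=(xy,0)\in N_{1}\times R=N$, while $(x,0)\notin(N:M)=N_{1}\times R$ and $(y,0)\notin M$-$rad(N)=(x)\times R$; so $N$ is not $1$-absorbing primary over $R\times R$ even though $N_{1}$ is over $R$. This is forced by the paper's own Corollary \ref{cq}: $R\times R$ is never quasilocal, so over it $S\times\{1\}$-$1$-absorbing primary collapses to $S\times\{1\}$-primary. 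Be aware that the paper's printed proof of this direction contains the same gap --- from $(a,a'),(b,b')$ non-units of $R\times R$ it silently treats $a$ and $b$ as non-units of $R$ --- so the defect lies in the statement rather than only in your write-up; the converse becomes correct if one assumes $N_{1}$ is $S$-primary.
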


\begin{proof}
	
	(1) Suppose that $N$ is an $S$-1-absorbing primary submodule of $M,$ let
	$(s,1)$ be $S\times\{1\}$-element of $N.$ Let $a,b$ be non-nunit elements of
	$R$ and $m_{1}\in M_{1}$ with $abm_{1}\in N_{1},$ so $(a,1)(b,1)(m_{1}%
	,0)=(abm_{1},0)\in N.$ Hence $(s,1)(a,1)(b,1)\in(N_{1}\times M_{2}:M_{1}\times
	M_{2})$ or $(s,1)(m_{1},0)\in M$-$rad(N_{1}\times M_{2}).$ If
	$(s,1)(a,1)(b,1)\in(N_{1}\times M_{2}:M_{1}\times M_{2}),$ it is clear that
	$sab\in(N_{1}:M_{1}).$ Now suppose $(s,1)(m_{1},0)\in M$-$rad(N_{1}\times
	M_{2}).$ By \cite[Lemma 2.10]{Ebrahimi}, we have $(s,1)(m_{1},0)\in
	M_{1}$-$rad(N_{1})\times M_{2},$ and thus $sm_{1}\in M_{1}$-$rad(N_{1}).$ We conclude that $N_{1}$ is an $S$-1-absorbing primary submodule. Now assume that $N_{1}$ is an $S$-1-absorbing primary submodule of $M_{1}$ and let $s\in S$ be an $S$-element of $N_{1}.$ Let $(a,a^{\prime}), (b,b^{\prime})$ be non-units of $R\times R$
	and $(m,m^{\prime})\in M$ with $(a,a^{\prime})(b,b^{\prime})(m,m^{\prime})\in
	N.$ Then $abm\in N_{1}$ and so either $sab\in(N_{1}:M_{1})$ or $sm_{1}\in
	M_{1}$-$rad(N).$ If $sab\in N_{1},$ we conclude that $(s,1)(a,a^{\prime
	})(b,b^{\prime})=(sab,a^{\prime}b^{\prime})\in(N_{1}\times M_{2}:M_{1}\times
	M_{2}),$ now if $sab\in M_{1}$-$rad(N_{1}),$ we get $(s,1)(a,a^{\prime
	})(m,m^{\prime})=(sam,sa^{\prime}m^{\prime})\in M_{1}$-$rad(N_{1})\times
	M_{2}=M$-$rad(N_{1}\times M_{2})$ by \cite[Lemma 2.10]{Ebrahimi}. Hence, $N$ is an
	$S\times\{1\}$-1-absorbing primary submodule of $M.$
	
	(2) The similar proof of (1).
	
\end{proof}

\begin{lemma}
	\label{mrad}  Let $N $ be a submodule of $M $ and $S $ be a multiplicatively
	closed subset of $R. $ If $(P:M) \cap S= \emptyset$ for all prime submodule $P
	$ containing $N, $ then $(M$-$rad(N):s)=M$-$rad(N)=M$-$rad(N:s) $ for any $s\in S. $
\end{lemma}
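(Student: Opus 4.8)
The plan is to reduce the whole three‑term chain to a single property of the prime submodules of $M$ lying above $N$. Write $M$-$rad(N)=\bigcap_{N\subseteq P}P$, the intersection over all prime submodules $P$ of $M$ with $N\subseteq P$, with the usual convention that this equals $M$ when no such $P$ exists. Fix $s\in S$. The one fact I would establish first is that $(P:s)=P$ for every prime submodule $P$ with $N\subseteq P$: the inclusion $P\subseteq(P:s)$ is automatic since $P$ is a submodule, and if $m\in(P:s)$ then $sm\in P$, so primeness of $P$ forces $s\in(P:M)$ or $m\in P$; the hypothesis $(P:M)\cap S=\emptyset$ together with $s\in S$ rules out the first alternative, whence $m\in P$.

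Granting this, the equality $(M$-$rad(N):s)=M$-$rad(N)$ is immediate. The inclusion $M$-$rad(N)\subseteq(M$-$rad(N):s)$ holds because $M$-$rad(N)$ is a submodule of $M$. Conversely, if $sm\in M$-$rad(N)$, then $sm\in P$ for every prime submodule $P$ with $N\subseteq P$, hence $m\in(P:s)=P$ for each such $P$ by the key fact, so $m\in\bigcap_{N\subseteq P}P=M$-$rad(N)$.

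For $M$-$rad(N)=M$-$rad(N:s)$ I would check that the family of prime submodules containing $N$ coincides with the family of prime submodules containing $(N:s)$; the two radicals are then intersections over the same index set and hence equal. One direction is trivial: $N\subseteq(N:s)$ since $N$ is a submodule, so any prime submodule containing $(N:s)$ contains $N$. For the other direction, if $P$ is a prime submodule with $N\subseteq P$, then $(N:s)\subseteq(P:s)=P$ by the key fact above, so $P$ contains $(N:s)$.

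I do not expect a genuine obstacle here; the only point requiring care is the degenerate case in which no prime submodule contains $N$. Then $M$-$rad(N)=M$ by convention and $(M$-$rad(N):s)=(M:s)=M$; moreover $N\subseteq(N:s)$ forces $(N:s)$ to lie in no prime submodule either, so $M$-$rad(N:s)=M$ as well, and all three terms coincide. Outside that case the arguments above go through unchanged.
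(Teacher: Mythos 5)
Your proposal is correct and follows essentially the same route as the paper: both arguments rest on the observation that for a prime submodule $P\supseteq N$, the hypothesis $(P:M)\cap S=\emptyset$ together with primeness forces $sm\in P\Rightarrow m\in P$ (equivalently, your key fact $(P:s)=P$), from which both nontrivial inclusions follow. Your explicit treatment of the degenerate case where no prime submodule contains $N$ is a small completeness bonus the paper omits, but it does not change the substance of the argument.
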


\begin{proof}
	The inclusions $M$-$rad(N)\subseteq M$-$rad(N:s) $ and $M$-$rad(N)\subseteq
	(M$-$rad(N):s) $ are clear. Let $m\in(N:s) $ and suppose that $P $ is a prime
	submodule of $M $ containing $N. $ Then $sm\in N\subseteq P $ implies directly
	$m\in P $ due to our assumption. Thus, $(N:s)\subseteq P $ and we conclude
	$M$-$rad(N)=M$-$rad(N:s). $ Now, let $m\in(M$-$rad(N):s). $ Then $sm\in M$-$rad(N) $
	which means that $sm\in P $ for all prime submodules $P $ containing $N. $
	Again we have $m\in P $ by our assumption. It follows that $m\in M$-$rad(N) $
	and consequently we get $(M$-$rad(N):s)\subseteq M$-$rad(N). $
\end{proof}

In the following proposition, we denote $Z_{I}(R) $ by the set $\{x\in R \mid
xy\in I \text{ for some } y\in R\setminus I\}. $

\begin{proposition}
	\label{Ns}  Let $N $ be a submodule of $M $ and $S $ be a multiplicatively
	closed subset of $R $ provided $(P:M) \cap S= \emptyset$ for all prime
	submodule $P $ containing $N. $ If $Z_{(N:M)}(R) \cap S= \emptyset, $ then
	$(N:s) $ is a 1-absorbing primary submodule of $M $ for all $s\in S. $
\end{proposition}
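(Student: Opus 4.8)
The plan is to deduce the conclusion from the fact that $N$ is an $S$-$1$-absorbing primary submodule of $M$ together with the two hypotheses, which are precisely what is needed to absorb the $S$-element of $N$ into the conclusions. First I would fix $s\in S$, let $s_{0}\in S$ be an $S$-element of $N$, and record the elementary identities that organize the argument. Since $(N:_{R}M)\cap S=\emptyset$ we have $s\notin(N:_{R}M)$, so $sM\nsubseteq N$ and $(N:s)$ is a proper submodule of $M$; also $N\subseteq(N:s)$ yields $(N:_{R}M)\subseteq((N:s):_{R}M)$. Since $(P:_{R}M)\cap S=\emptyset$ for every prime submodule $P$ containing $N$, Lemma \ref{mrad} applies and gives $M$-$rad((N:s))=M$-$rad(N)$ together with $(M$-$rad(N):t)=M$-$rad(N)$ for every $t\in S$.

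Next I would take non-unit elements $a,b\in R$ and $m\in M$ with $abm\in(N:s)$, which means $ab(sm)\in N$. Applying the $S$-$1$-absorbing primary property of $N$ with its fixed element $s_{0}$ to $ab(sm)\in N$ yields $s_{0}ab\in(N:_{R}M)$ or $s_{0}(sm)\in M$-$rad(N)$. In the first case, $s_{0}\in S$ and $Z_{(N:M)}(R)\cap S=\emptyset$ force $s_{0}\notin Z_{(N:M)}(R)$, hence $((N:_{R}M):_{R}s_{0})=(N:_{R}M)$, so $ab\in(N:_{R}M)\subseteq((N:s):_{R}M)$. In the second case, $s_{0}s\in S$, so by the identity from Lemma \ref{mrad} applied with $t=s_{0}s$ we get $m\in(M$-$rad(N):s_{0}s)=M$-$rad(N)=M$-$rad((N:s))$. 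In either case the defining condition for a $1$-absorbing primary submodule is met, and since $(N:s)$ is proper this shows $(N:s)$ is $1$-absorbing primary; the argument is uniform in $s\in S$.

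I expect the two ``cancellation'' steps to be the only delicate points. The first is the observation that $Z_{(N:M)}(R)\cap S=\emptyset$ says exactly that $((N:_{R}M):_{R}s_{0})=(N:_{R}M)$ for each $s_{0}\in S$, which upgrades $s_{0}ab\in(N:_{R}M)$ to $ab\in(N:_{R}M)$. The second is that Lemma \ref{mrad} must be invoked for $s_{0}s\in S$, not just for $s$, so one first notes that $S$ is multiplicatively closed; after that the radical case closes at once. Everything else is the routine identification of $((N:s):_{R}M)$ and $M$-$rad((N:s))$ made at the start. I would also point out that the argument uses that $N$ is itself an $S$-$1$-absorbing primary submodule of $M$ (to provide the $S$-element $s_{0}$), which I take to be part of the intended hypotheses of the proposition.
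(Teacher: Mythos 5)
Your proof is correct and follows essentially the same route as the paper's: rewrite $abm\in(N:s)$ as $ab(sm)\in N$, apply the $S$-$1$-absorbing primary property of $N$, and then cancel the resulting $S$-elements, using $Z_{(N:M)}(R)\cap S=\emptyset$ in the colon case and Lemma \ref{mrad} in the radical case. You are also right that the argument needs $N$ itself to be $S$-$1$-absorbing primary, a hypothesis the proposition omits but which the paper's own proof tacitly uses in exactly the same way.
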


\begin{proof}
	Suppose that $abm\in(N:s) $ for some non-unit $a,b\in R $ and $m\in M. $
	Hence, $sabm\in M $ and there exists $s^{\prime}\in S $ such that either
	$s^{\prime}sab\in(N:M) $ or $s^{\prime}m\in M$-$rad(N). $ If $s^{\prime}%
	sab\in(N:M), $ then we conclude $ab\in(N:M)\subseteq((N:s):M) $ since
	$Z_{(N:M)}(R) \cap S= \emptyset. $ Now, assume that $s^{\prime}m\in M$-$rad(N).
	$ Then we have $m\in(M$-$rad(N):s^{\prime})=M$-$rad(N)=M$-$rad(N:s) $ by Lemma
	\ref{mrad}, so we are done.
\end{proof}

\section{$S$-1-absorbing Primary Avoidance Theorem}\label{sec3}

In this section, $S $-1-absorbing primary avoidance theorem is proved.
Throughout this section, let $M $ be a finitely generated multiplication $R
$-module, $N, N_{1},N_{2}, \dots, N_{n} $ be submodules of $M $ and $S $ be a
multiplicatively closed subset of $R. $ Recall that a covering $N\subseteq
N_{1} \cup N_{2} \cup\dots\cup N_{n}$ is said to be efficient if no $N_{k} $
is unnecessary. Besides, if none of the $N_{k} $ may be excluded, the union
$N= N_{1} \cup N_{2} \cup\dots\cup N_{n}$ is efficient \cite{Lu}. We first
state some theorems that we will require.

\begin{lemma}
	(\cite{HM}, Corollary 1)\label{Cor1}  Let $R $ be a ring, $S \subseteq R $ a
	multiplicatively closed set and $P $ an ideal of $R $ disjoint with $S $. Then
	$P $ is $S $-prime if and only if there exists $s \in S $, such that for all
	$I_{1}, \dots, I_{n} $ ideals of R, if $I_{1} \dots I_{n} \subseteq P $, then
	$sI_{j} \in P $ for some $j\in\{1,\dots, n\}. $
\end{lemma}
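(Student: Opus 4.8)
The plan is to run both implications through the elementwise definition of an $S$-prime ideal, using as a bridge the auxiliary ideal $Q:=(P:_{R}s^{2})$ for an $S$-element $s$ of $P$ (I read the conclusion ``$sI_{j}\in P$'' as $sI_{j}\subseteq P$).

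For the implication $(\Leftarrow)$, assume such an $s\in S$ exists and take $a,b\in R$ with $ab\in P$. Applying the hypothesis with $n=2$, $I_{1}=aR$ and $I_{2}=bR$, from $I_{1}I_{2}=abR\subseteq P$ we obtain $sI_{1}\subseteq P$ or $sI_{2}\subseteq P$, that is, $sa\in P$ or $sb\in P$. Since $P\cap S=\emptyset$ by hypothesis, this is precisely the definition of $P$ being $S$-prime, so this direction is immediate.

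For $(\Rightarrow)$, let $s$ be an $S$-element witnessing that $P$ is $S$-prime and set $Q:=(P:_{R}s^{2})$. The first step is to show $Q$ is a prime ideal: if $ab\in Q$, then $s^{2}ab=(sa)(sb)\in P$, so the $S$-prime property applied to the pair $sa,\,sb$ gives $s(sa)\in P$ or $s(sb)\in P$, i.e. $a\in Q$ or $b\in Q$; and $Q$ is proper because $1\in Q$ would force $s^{2}\in P$, contradicting $s^{2}\in S$ and $P\cap S=\emptyset$. The second step is the trivial observation $P\subseteq Q$ (since $s^{2}P\subseteq P$). Now, given arbitrary ideals $I_{1},\dots,I_{n}$ of $R$ with $I_{1}\cdots I_{n}\subseteq P\subseteq Q$, primeness of $Q$ yields $I_{j}\subseteq Q$ for some $j$, which means $s^{2}I_{j}\subseteq P$. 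Hence $s':=s^{2}\in S$ is the desired witness, and crucially it works for every $n$ at once.

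The one genuinely delicate point — the step I would be most careful with — is obtaining a \emph{uniform} witness. A naive induction on $n$ that peels off one factor at a time, repeatedly invoking the $S$-prime property, accumulates an extra factor of $s$ at each stage and only produces a witness $s^{n-1}$ depending on $n$, which does not meet the ``there exists $s\in S$'' quantifier order in the statement. Routing through the genuinely prime ideal $Q=(P:_{R}s^{2})$ is exactly what removes this dependence: once $Q$ is prime, splitting any finite product of ideals costs no further $s$-factors, so the single element $s^{2}$ suffices uniformly. (One may also note, though it is not needed here, that $Q\cap S=\emptyset$, so $Q$ is itself an $S$-prime ideal.)
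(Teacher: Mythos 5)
Your proof is correct. The paper itself gives no argument for this lemma (it is quoted from the cited reference of Hamed and Malek), and your route is essentially the standard one there: both directions reduce to the elementwise definition, and the forward direction passes through the genuinely prime ideal $(P:_{R}s^{2})$, which is exactly what yields a single witness $s^{2}$ valid for all $n$ simultaneously. Your explicit flagging of the quantifier issue --- that a naive induction on $n$ would produce a witness depending on $n$ --- is the one point where such arguments usually go wrong, and you handle it correctly.
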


\begin{lemma}
	(\cite{HM}, Proposition 4)\label{Prop4}  Let $R $ be a ring, $S \subseteq R $
	a multiplicatively closed set and $P $ an ideal of $R $ disjoint with $S $.
	Then $P $ is $S $-prime if and only if there exists $s \in S $ such that for
	all $x_{1}, \dots, x_{n} \in R $, if $x_{1} \dots x_{n} \in P $, then $sx_{j}
	\in P $ for some $j\in\{1,\dots, n\}. $
\end{lemma}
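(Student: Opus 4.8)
The plan is to prove the equivalence by passing through an associated prime ideal. Recall that an ideal $P$ with $P\cap S=\emptyset$ is $S$-prime precisely when there is a fixed $s\in S$ such that $ab\in P$ forces $sa\in P$ or $sb\in P$; this is exactly the $n=2$ instance of the displayed condition, so the implication from the $n$-ary statement back to $S$-primeness is immediate (apply it to $x_1=a$, $x_2=b$ with the same $s$).

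For the forward implication, fix an $S$-element $s$ of $P$ and set $Q:=(P:_{R}s)=\{x\in R: sx\in P\}$. First, $Q$ is a proper ideal disjoint from $S$: if $t\in Q\cap S$ then $st\in P$ while $st\in S$, contradicting $P\cap S=\emptyset$; and $P\subseteq Q$ since $P$ is an ideal. The key claim is that $Q$ is prime. Suppose $ab\in Q$, i.e. $sab\in P$. Writing this as $(sa)b\in P$ and using $S$-primeness of $P$ with the element $s$, we get $s(sa)=s^{2}a\in P$ or $sb\in P$; in the second case $b\in Q$ and we are done. In the first case write $s^{2}a=s^{2}\cdot a\in P$ and apply $S$-primeness again: $s\cdot s^{2}=s^{3}\in P$ or $sa\in P$. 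The alternative $s^{3}\in P$ is impossible since $s^{3}\in S$ and $P\cap S=\emptyset$, hence $sa\in P$, i.e. $a\in Q$. Thus $Q$ is a prime ideal.

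Now the $S$-element $s$ itself witnesses the $n$-ary condition: if $x_{1}\cdots x_{n}\in P\subseteq Q$, then since $Q$ is prime the usual induction on $n$ gives $x_{j}\in Q$ for some $j$, that is, $sx_{j}\in P$, as required. I expect the only real obstacle to be the uniformity of $s$ in $n$: a naive induction directly on the binary definition produces a bound of the shape $s^{\,n-1}x_{j}\in P$, whose exponent grows with $n$ and hence does not yield a single fixed $s$ valid for all lengths. Replacing $P$ by the genuine prime ideal $Q=(P:_{R}s)$ circumvents this, since primeness of $Q$ handles all lengths at once. Alternatively, one can deduce the statement directly from Lemma~\ref{Cor1}: given $x_{1}\cdots x_{n}\in P$, apply that lemma to the principal ideals $I_{j}=Rx_{j}$, whose product $(x_{1}\cdots x_{n})$ is contained in $P$, to obtain $sRx_{j}\subseteq P$ and hence $sx_{j}\in P$ for some $j$.
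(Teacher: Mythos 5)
The paper does not prove this lemma at all; it is quoted verbatim from Hamed--Malek \cite{HM} (their Proposition 4), so there is no in-paper argument to compare against. Your proposal is a correct, self-contained proof. The backward direction is indeed just the case $n=2$ of the displayed condition. For the forward direction, your key step --- showing that $Q=(P:_{R}s)$ is a prime ideal disjoint from $S$ whenever $s$ is an $S$-element of $P$ --- is exactly the standard characterization underlying the results in \cite{HM}, and your verification is sound: from $sab\in P$ you get $s^{2}a\in P$ or $sb\in P$, and in the first case a second application of $S$-primeness forces $sa\in P$ because $s^{3}\in S$ cannot lie in $P$. Passing to the genuine prime $Q$ then yields the $n$-ary statement with the single fixed $s$, and your remark about why a naive induction fails (the exponent of $s$ grows with $n$, so no uniform $S$-element emerges) correctly identifies the only real subtlety. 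Your alternative derivation from Lemma~\ref{Cor1} applied to the principal ideals $Rx_{1},\dots,Rx_{n}$ is also valid and is probably the shortest route given that the paper already quotes that corollary; the $(P:_{R}s)$ argument has the advantage of being independent of it.
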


\begin{theorem}
	\label{A1} Let $N\subseteq N_{1}\cup N_{2}\cup\dots\cup N_{n}$ be an efficient
	covering of submodules $N_{1},N_{2},\dots,N_{n}$ of $M$ where $n>2$. Assume
	that $\sqrt{(N_{k}:m)}$ is an ideal of $R$ disjoint with $S$ {\ for all $1\leq
		k\leq n$ and all $m\in M\setminus P$ where $P$ is any prime submodule of $M$
		containing $N_{k}.$} If $\sqrt{(N_{j}:M)}\nsubseteq(\sqrt{(N_{k}:m)}:s)$ for
	all $s\in S$ whenever $j\neq k,$ then no $N_{k}$ is an $S$-1-absorbing primary
	submodule of $M.$
\end{theorem}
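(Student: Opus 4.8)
The plan is to adapt the classical prime/primary avoidance argument to the $S$-$1$-absorbing primary setting. Suppose for contradiction that some $N_k$, say after reindexing $N_n$, is an $S$-$1$-absorbing primary submodule of $M$, and let $s$ be an $S$-element of $N_n$. Since the covering $N\subseteq N_1\cup\cdots\cup N_n$ is efficient, for each index there is an element of $N$ lying in that $N_k$ but in no other; in particular we can pick $x\in N\cap N_n$ with $x\notin N_k$ for all $k<n$, and for each $k<n$ pick $y_k\in N\cap N_k$ with $y_k\notin N_j$ for $j\neq k$. The standard trick is to form, for a suitable choice of elements $a,b\in R\setminus U(R)$ coming from the ideals $\sqrt{(N_j:M)}$, a combination that is forced into $N\subseteq N_1\cup\cdots\cup N_n$ but, by the $S$-$1$-absorbing primary hypothesis on $N_n$, must then violate one of the disjointness conditions $\sqrt{(N_j:M)}\nsubseteq(\sqrt{(N_k:m)}:s)$.

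More concretely, I would first translate the module statement into an ideal statement: since $M$ is finitely generated multiplication, write $N_k=I_kM$ for ideals $I_k$ and use $\sqrt{(N_k:M)}=(M\text{-}rad(N_k):_RM)$ (the fact recalled from \cite{Hoj}) together with Lemma~\ref{d}(1), which says $(N_n:_RM)$ is an $S$-$1$-absorbing primary ideal of $R$, hence by Lemma~\ref{lrad} its radical $\sqrt{(N_n:_RM)}$ is an $S$-prime ideal of $R$. Then I would invoke Lemma~\ref{Cor1} (or Lemma~\ref{Prop4}): there is $s\in S$ such that whenever a product of ideals lands in $\sqrt{(N_n:_RM)}$, one factor is absorbed by $s$. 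The avoidance contradiction is built by choosing elements $w_j\in\sqrt{(N_j:M)}$ for $j\neq n$ witnessing the non-inclusions, multiplying them appropriately against the chosen module elements $y_k$, and observing that the resulting element of $N$ cannot lie in any $N_k$ without forcing $s\,w_j\in\sqrt{(N_k:m)}$ for some admissible $m\in M\setminus P$, i.e. $w_j\in(\sqrt{(N_k:m)}:s)$, contradicting the hypothesis $\sqrt{(N_j:M)}\nsubseteq(\sqrt{(N_k:m)}:s)$.

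The delicate points, and where I expect the real work to be, are: (i) ensuring the ring elements $w_j$ that implement the avoidance can be taken non-unit (so that the $S$-$1$-absorbing primary definition applies — here the hypothesis $n>2$ is used exactly as in the $1$-absorbing primary avoidance theorem, since with three or more summands one has room to multiply two non-unit elements and still miss the remaining $N_k$'s), and (ii) correctly bookkeeping which module element $m$ to plug into $(\sqrt{(N_k:m)}:s)$ so that $m\notin P$ for the relevant prime submodule $P\supseteq N_k$, which is needed for $\sqrt{(N_k:m)}$ to be a proper ideal disjoint from $S$ as in the hypothesis. I would handle (i) by the usual case split on whether the combination $x - y_k$ or a product $w_1\cdots w_{n-1}\cdot x$ lies in $N_n$ versus some earlier $N_k$, pushing everything through the $S$-$1$-absorbing primary condition for $N_n$ with its fixed $S$-element $s$, and deriving in every branch an inclusion of the form $\sqrt{(N_j:M)}\subseteq(\sqrt{(N_k:m)}:s)$ for some $j\neq k$, which is the promised contradiction. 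Once the contradiction is reached for the assumed index $n$, the same argument applies verbatim to any $k$, so no $N_k$ is $S$-$1$-absorbing primary.
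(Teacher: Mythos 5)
Your overall strategy points in the same direction as the paper's proof (efficiency of the covering, witnesses to the non-inclusions, the radical lemmas, and the $S$-prime machinery of Lemmas \ref{Cor1} and \ref{Prop4}), but the proposal stops exactly where the work has to happen, and the two concrete choices you do commit to would derail it. First, the $S$-prime ideal that drives the argument is $\sqrt{(N_k:m)}$, not $\sqrt{(N_k:M)}$: the hypothesis hands you, for each $j\neq k$, a non-unit $a_j\in\sqrt{(N_j:M)}$ with $sa_j\notin\sqrt{(N_k:m)}$ for all $s\in S$, and to conclude that the product $a=\prod_{j\neq k}a_j$ is still outside $\sqrt{(N_k:m)}$ you must know that $\sqrt{(N_k:m)}$ itself is $S$-prime, so that Lemma \ref{Prop4} applies to it. That is obtained from Lemma \ref{d}(2) (which is precisely why the theorem's hypotheses are phrased for $(N_k:m)$ with $m\in M\setminus P$) followed by Lemma \ref{lrad}. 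Invoking Lemma \ref{d}(1) and the $S$-primeness of $\sqrt{(N_n:_RM)}$, as you propose, establishes a property of the wrong ideal and cannot be matched against the hypothesis $\sqrt{(N_j:M)}\nsubseteq(\sqrt{(N_k:m)}:s)$.

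Second, the contradiction element is never built. The paper takes $m_k\in N\setminus N_k$ (available by efficiency), sets $t=\max\{n_j\}$ with $a_j^{n_j}\in(N_j:M)$ so that $a^t\in(N_j:M)$ for every $j\neq k$, and observes that $a^tm_k$ lies in $\bigcap_{j\neq k}(N\cap N_j)$, which for an efficient union with $n>2$ is contained in $N\cap N_k$ by (\cite{Lu}, Lemma 2.2); on the other hand $a\notin\sqrt{(N_k:m_k)}$ forces $a^tm_k\notin N_k$, and this is the contradiction. Your element $x\in N\cap N_n$ cannot play the role of $m_k$ (anything multiplied into $x$ stays in $N_n$, so no contradiction can come from it), the additive combination $x-y_k$ belongs to the classical prime-avoidance argument and is not used here, and the inclusion $\bigcap_{j\neq k}(N\cap N_j)\subseteq N\cap N_k$ --- the actual endgame, and the place where $n>2$ enters --- does not appear in your sketch at all. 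These are the missing ideas rather than routine bookkeeping, so the proposal as it stands does not constitute a proof.
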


\begin{proof}
	It can be easily seen that $N=(N\cap N_{1})\bigcup(N\cap N_{2})\bigcup
	\dots\bigcup(N\cap N_{n})$ is an efficient union since $N\subseteq N_{1} \cup
	N_{2} \cup\dots\cup N_{n}$ is efficient.  Thus, there exists an element $m_{k}
	\in N \setminus N_{k}$ for all $k\leq n. $ From (\cite{Lu}, Lemma 2.2), it is
	known that $\bigcap_{j\neq k}(N\cap N_{j})\subseteq N \cap N_{k}. $ Now assume
	that $N_{k} $ is an $S $-1-absorbing primary submodule of $M $ for some
	$k\in\{1,\dots, n\}. $ Then, $(N_{k}:m) $ is an $S $-1-absorbing primary ideal
	of $R $ by Lemma \ref{d}. Furthermore, by Lemma \ref{lrad}, $\sqrt{(N_{k}:m)}
	$ is an $S $-prime ideal of $R. $ By our hypothesis, $s\sqrt{(N_{j}:M)}
	\nsubseteq(\sqrt{(N_{k}:m)} $ and then there exists a non-unit element
	$a_{j}\in\sqrt{(N_{j}:M)} $ such that $sa_{j}\notin\sqrt{(N_{k}:m)} $ for all
	$s\in S $ and all $m\in M \setminus N_{k}. $ (If $a_{j} $ is unit, then
	$\sqrt{(N_{j}:M)}=R $ and hence $N_{j}=M $ which contradicts with our
	assumption that the covering is efficient.) So, there exists a positive
	integer $n_{j} $ such that $a_{j}^{n_{j}}\in(N_{j}:M) $ where $j\neq k. $ From
	Lemma \ref{Cor1}, we also have $\prod_{j\neq k} \sqrt{(N_{j}:M)}
	\nsubseteq\sqrt{(N_{k}:m)} $ due to $\sqrt{(N_{k}:m)} $ is $S $-prime. Now let
	us say $a= \prod_{j\neq k}a_{j}$ and $t=max\{n_{j}\}_{j\neq k}. $ Then we get
	$a^{t} \in(N_{j}:M) $ and it follows that $a^{t}m_{k} \in N \cap N_{j} $ for
	every $j\neq k. $ However, $a \notin\sqrt{(N_{k}:m)} $ by Lemma \ref{Prop4}
	and this implies that $a^{t}m\notin N_{k} $ for all $m\in M \setminus N_{k}. $
	As a result, $a^{t}m_{k} \notin N_{k}$ because $m_{k} \notin N_{k}$ as well.
	This shows that $a^{t}m_{k} \in\bigcap_{j\neq k}(N\cap N_{j})\setminus N \cap
	N_{k}, $ which is a contradiction.
\end{proof}

\begin{theorem}
	[$S $-1-absorbing primary avoidance theorem] Let $N, N_{1},N_{2}, \dots, N_{n}
	$ be submodules of $M $ such that $N\subseteq N_{1} \cup N_{2} \cup\dots\cup
	N_{n}$ where at most two of $N_{1},N_{2}, \dots, N_{n} $ are not $S
	$-1-absorbing primary submodules. Suppose that $\sqrt{(N_{k}:m)} \cap
	S=\emptyset$ for all $m\in M \setminus N_{k} $ where $1\leq k\leq n $. If
	$\sqrt{(N_{j}:M)} \nsubseteq(\sqrt{(N_{k}:m)}:s) $ for all $s\in S $ whenever
	$j\neq k, $ then $N \subseteq N_{k} $ for some $k. $
\end{theorem}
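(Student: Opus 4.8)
The plan is to argue by contradiction in the classical style of the prime avoidance theorem: suppose $N\nsubseteq N_k$ for every $k$, pass to an efficient subcovering, and then split into cases according to how many submodules survive the reduction. First I would discard superfluous members: after relabeling, we may assume $N\subseteq N_1\cup\cdots\cup N_m$ is an efficient covering for some $m\le n$, i.e. $N\nsubseteq\bigcup_{j\ne k}N_j$ for each $k\le m$. Passing to a subfamily only strengthens all hypotheses — at most two of the surviving $N_i$ fail to be $S$-1-absorbing primary, the disjointness $\sqrt{(N_k:x)}\cap S=\emptyset$ for $x\in M\setminus N_k$ still holds, and so does $\sqrt{(N_j:M)}\nsubseteq(\sqrt{(N_k:x)}:s)$ for $j\ne k$. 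Since $N\nsubseteq N_k$ for all $k$, the reduction cannot stop at $m=1$, so $m\ge2$.

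If $m=2$, I would use the standard additive trick: efficiency gives $x\in N\setminus N_1\subseteq N_2$ and $y\in N\setminus N_2\subseteq N_1$, and then $x+y\in N$; but $x+y\in N_1$ would force $x=(x+y)-y\in N_1$ and $x+y\in N_2$ would force $y=(x+y)-x\in N_2$, so $x+y\notin N_1\cup N_2$, contradicting $N\subseteq N_1\cup N_2$. This case uses only efficiency and the fact that $N$ is closed under addition.

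It remains to treat $m>2$, where I would invoke Theorem \ref{A1}. Its hypotheses are met for the reduced family: for each $k\le m$ and each prime submodule $P\supseteq N_k$ we have $M\setminus P\subseteq M\setminus N_k$, so the assumed disjointness $\sqrt{(N_k:x)}\cap S=\emptyset$ for all $x\in M\setminus N_k$ yields the disjointness required by Theorem \ref{A1}, and the non-containment condition $\sqrt{(N_j:M)}\nsubseteq(\sqrt{(N_k:x)}:s)$ for $j\ne k$ is precisely what that theorem asks for. Since the reduced covering is efficient with $m>2$ members, Theorem \ref{A1} forces every $N_k$ with $k\le m$ to fail to be $S$-1-absorbing primary. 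But by hypothesis at most two members of the family are not $S$-1-absorbing primary while $m\ge3$, so at least $m-2\ge1$ of them must be $S$-1-absorbing primary — a contradiction. Hence the assumption is untenable and $N\subseteq N_k$ for some $k$.

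The main obstacle, as is typical for avoidance results, is the $m>2$ case, and it is entirely outsourced to Theorem \ref{A1}; the only genuine care required is the bookkeeping that the reduction to an efficient covering preserves every hypothesis, and the observation that $M\setminus P\subseteq M\setminus N_k$ for $N_k\subseteq P$ lets the hypothesis stated here supply exactly what Theorem \ref{A1} needs. The cases $m\le2$ are routine.
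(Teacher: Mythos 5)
Your proposal is correct and follows essentially the same route as the paper: reduce to an efficient covering, rule out the two-term case by the standard additive argument, and for more than two terms invoke Theorem \ref{A1} to contradict the hypothesis that at most two of the $N_k$ fail to be $S$-1-absorbing primary. You merely spell out details the paper leaves implicit (the $m=2$ trick, the preservation of hypotheses under reduction, and the observation that $M\setminus P\subseteq M\setminus N_k$ supplies the disjointness condition Theorem \ref{A1} requires).
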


\begin{proof}
	We can assume that the covering $N\subseteq N_{1} \cup N_{2} \cup\dots\cup
	N_{n}$ is efficient because any cover including submodules of $M $ can be
	reduced to an efficient covering by eliminating any superfluous terms and
	therefore $n \neq2 $. Since there is no $S $-1-absorbing primary submodule
	$N_{k} $ from Theorem \ref{A1}, $n<2 $. Hence we obtain $N \subseteq N_{k} $
	for some $k \in\{1,\dots, n\}. $
\end{proof}

\begin{corollary}
	Let $N $ be a submodule of $M. $ If the $S $-1-absorbing primary avoidance
	theorem holds for $M $, then the theorem holds also for $M/N. $
\end{corollary}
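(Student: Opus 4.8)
The plan is to exploit the lattice correspondence between submodules of $M/N$ and submodules of $M$ containing $N$: an arbitrary covering in $M/N$ satisfying the hypotheses of the $S$-$1$-absorbing primary avoidance theorem will be lifted to a covering of $M$, the theorem will be applied there, and the conclusion pushed back down to $M/N$. (Note first that $M/N$ is again a finitely generated multiplication $R$-module, so the standing assumptions of this section are met.)

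First I would take submodules $\bar L, \bar N_{1}, \dots, \bar N_{n}$ of $M/N$ with $\bar L \subseteq \bar N_{1}\cup\cdots\cup \bar N_{n}$, at most two of the $\bar N_{i}$ not $S$-$1$-absorbing primary, $\sqrt{(\bar N_{k}:_{R} m+N)}\cap S=\emptyset$ for all $m+N\in (M/N)\setminus \bar N_{k}$, and $\sqrt{(\bar N_{j}:_{R}M/N)}\nsubseteq(\sqrt{(\bar N_{k}:_{R}m+N)}:s)$ for all $s\in S$ whenever $j\neq k$. Write $\bar L=L/N$ and $\bar N_{i}=N_{i}/N$, where $L,N_{1},\dots,N_{n}$ are submodules of $M$ containing $N$.

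Next I would check that $L,N_{1},\dots,N_{n}$ furnish a valid instance of the avoidance theorem for $M$. The covering transports: if $x\in L$ then $x+N\in N_{i}/N$ for some $i$, hence $x\in N_{i}$ because $N\subseteq N_{i}$, so $L\subseteq N_{1}\cup\cdots\cup N_{n}$. By Corollary \ref{c/}(1), $N_{i}/N$ is $S$-$1$-absorbing primary in $M/N$ if and only if $N_{i}$ is $S$-$1$-absorbing primary in $M$, so at most two of the $N_{i}$ fail to be $S$-$1$-absorbing primary. Finally the residuals transport: since $N\subseteq N_{i}$ one has $(N_{i}/N:_{R}m+N)=(N_{i}:_{R}m)$ and $(N_{i}/N:_{R}M/N)=(N_{i}:_{R}M)$, and $m+N\notin N_{k}/N$ precisely when $m\notin N_{k}$; hence $\sqrt{(N_{k}:_{R}m)}\cap S=\emptyset$ for all $m\in M\setminus N_{k}$ and $\sqrt{(N_{j}:_{R}M)}\nsubseteq(\sqrt{(N_{k}:_{R}m)}:s)$ for all $s\in S$ whenever $j\neq k$.

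Applying the $S$-$1$-absorbing primary avoidance theorem to $M$ then yields $L\subseteq N_{k}$ for some $k$, whence $\bar L=L/N\subseteq N_{k}/N=\bar N_{k}$, which is the assertion for $M/N$. I do not expect a genuine obstacle; the only point demanding care is the bookkeeping of the residual ideals and their radicals along $N_{i}\leftrightarrow N_{i}/N$, for which the identities $(N_{i}/N:_{R}m+N)=(N_{i}:_{R}m)$ and $(N_{i}/N:_{R}M/N)=(N_{i}:_{R}M)$ (valid because $N\subseteq N_{i}$) together with Corollary \ref{c/}(1) are the crux.
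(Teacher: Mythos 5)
Your proposal is correct and follows essentially the same route as the paper's own proof: write each submodule of $M/N$ as $N_i/N$ with $N\subseteq N_i$, transport the covering, the ``at most two not $S$-$1$-absorbing primary'' condition (via Corollary \ref{c/}(1)), and the residual/radical hypotheses up to $M$, apply the avoidance theorem there, and pass back to the quotient. Your write-up is in fact more careful than the paper's, which leaves the residual identities $(N_i/N:_R m+N)=(N_i:_R m)$ and $(N_i/N:_R M/N)=(N_i:_R M)$ implicit.
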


\begin{proof}
	Let $K/N, N_{1}/N, N_{2}/N, ..., N_{n}/N $ be submodules of $M/N $ such that
	at most two of $N_{1}/N, N_{2}/N, ..., N_{n}/N $ are not $S $-1-absorbing
	primary and $K/N \subseteq N_{1}/N \cap N_{2}/N \cap\dots\cap N_{n}/N $ where
	$n\geq2 $ . Thus, $K \subseteq N_{1} \cap N_{2} \cap\dots\cap N_{n} $ and at
	most two of $N_{1}, N_{2}, ..., N_{n} $ are not $S $-1-absorbing primary by
	Corollary 1. Assume that $\sqrt{N_{j}/N:M/N}\nsubseteq(\sqrt{N_{k}/N:m+N}:s)$
	for all $s\in S $ and all $m+N \in M/N\setminus N_{k}/N $ whenever $j\neq k. $
	Then $\sqrt{(N_{j}:M)} \nsubseteq(\sqrt{(N_{k}:m)}:s) $ for all $s\in S $.
	Since the $S $-1-absorbing primary avoidance theorem is satisfied by the
	submodules of $M, $ the rest of the proof is immediate.
\end{proof}

\section{Idealization of $S$-1-absorbing primary submodules}\label{sec4}

Let $M$ be a unitary $R$-module. Recall from \cite{Nagata} the idealization
$R(+)M$ of $M$ is the direct sum $R(+)M=R\oplus M$ which is a commutative ring
with componentwise addition and multiplication $(r_{1},m_{1})(r_{2},m_{2}%
)=(r_{1}r_{2},r_{1}m_{2}+r_{2}m_{1}).$  It is well known that if $N$ is a
submodule of $M$ with $IM\subset N,$ then $I(+)N$ is an ideal of $R(+)M.$

\begin{proposition}
	\label{id}Let $S$ be a multiplicatively closed subset of $R$, $I$ be an ideal
	of $R$ with $ I\cap S=\emptyset.$ Then the following statements are equivalent:
	
	\begin{enumerate}
		\item $I$ is an $S$-1-absorbing primary ideal of $R$.
		
		\item $I(+)M$ is an $S(+)0$-1-absorbing primary ideal of $R(+)M$.
		
		\item $I(+)M$ is an $S(+)M$-1-absorbing primary ideal of $R(+)M.$
	\end{enumerate}
\end{proposition}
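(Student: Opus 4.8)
The plan is to prove the cycle of implications $(1)\Rightarrow(2)\Rightarrow(3)\Rightarrow(1)$. Everything rests on three elementary facts about the idealization, which I would record at the outset. (i) An element $(r,m)$ of $R(+)M$ is a unit if and only if $r\in U(R)$; hence $(r,m)$ is a non-unit exactly when $r$ is a non-unit of $R$. (ii) For any ideal $I$ of $R$ one has $\sqrt{I(+)M}=\sqrt{I}(+)M$, because $(r,m)^{n}=(r^{n},nr^{n-1}m)$ lies in $I(+)M$ iff $r^{n}\in I$. (iii) Since the first coordinate of a product in $R(+)M$ is the product of the first coordinates, whether an element $(r,m)$ belongs to $I(+)M$ or to $\sqrt{I}(+)M$ depends only on $r$. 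In addition, both $S(+)0$ and $S(+)M$ are multiplicatively closed subsets of $R(+)M$, the conductor $(I(+)M:_{R(+)M}R(+)M)$ equals $I(+)M$, and $(I(+)M)\cap(S(+)0)=(I(+)M)\cap(S(+)M)=\emptyset$ is equivalent to $I\cap S=\emptyset$; thus the disjointness hypotheses in all three statements coincide, and $I(+)M$ is proper precisely when $I$ is.

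$(1)\Rightarrow(2)$. Let $s\in S$ be an $S$-element of $I$; I claim $(s,0)$ is an $S(+)0$-element of $I(+)M$. Suppose $(a,m_{1})(b,m_{2})(c,m_{3})\in I(+)M$ with all three factors non-units of $R(+)M$. By (i), $a,b,c$ are non-units of $R$, and by (iii) the containment forces $abc\in I$. Since $I$ is $S$-$1$-absorbing primary, $sab\in I$ or $sc\in\sqrt{I}$. In the first case the first coordinate of $(s,0)(a,m_{1})(b,m_{2})$ is $sab\in I$, so this product lies in $I(+)M$; in the second case the first coordinate of $(s,0)(c,m_{3})$ is $sc\in\sqrt{I}$, so this product lies in $\sqrt{I}(+)M=\sqrt{I(+)M}$ by (ii). This is exactly the $S(+)0$-$1$-absorbing primary condition for $I(+)M$.

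$(2)\Rightarrow(3)$ is immediate from Proposition \ref{p1}(1) applied to the ring $R(+)M$ viewed as a module over itself, since $S(+)0\subseteq S(+)M$ are multiplicatively closed subsets of $R(+)M$ and the disjointness condition holds for $S(+)M$. For $(3)\Rightarrow(1)$, let $(s,m_{0})$ be an $S(+)M$-element of $I(+)M$; as $I(+)M$ is then proper, so is $I$. Suppose $abc\in I$ for non-units $a,b,c\in R$. By (i), $(a,0),(b,0),(c,0)$ are non-units of $R(+)M$, and $(a,0)(b,0)(c,0)=(abc,0)\in I(+)M$. Hence $(s,m_{0})(a,0)(b,0)\in I(+)M$ or $(s,m_{0})(c,0)\in\sqrt{I(+)M}=\sqrt{I}(+)M$; reading off first coordinates via (iii) gives $sab\in I$ or $sc\in\sqrt{I}$, so $I$ is $S$-$1$-absorbing primary with $S$-element $s$.

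I do not anticipate a real obstacle: once facts (i)--(iii) are in place the verifications reduce to inspecting first coordinates. The only points that need genuine attention are the characterization of non-units of $R(+)M$ — used to transfer the non-unit hypothesis from $R(+)M$ down to $R$ in $(1)\Rightarrow(2)$ and from $R$ up to $R(+)M$ in $(3)\Rightarrow(1)$ — and the radical identity $\sqrt{I(+)M}=\sqrt{I}(+)M$, which is what makes the ``$\sqrt{\,\cdot\,}$'' alternative survive the passage between $R$ and $R(+)M$.
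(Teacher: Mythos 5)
Your proposal is correct and follows essentially the same route as the paper: the cycle $(1)\Rightarrow(2)\Rightarrow(3)\Rightarrow(1)$, using $U(R(+)M)=U(R)(+)M$ to transfer the non-unit hypotheses, the identity $\sqrt{I(+)M}=\sqrt{I}(+)M$ for the radical alternative, and the inclusion $S(+)0\subseteq S(+)M$ for the middle implication. The only cosmetic difference is that you isolate the three idealization facts up front, which makes the first-coordinate bookkeeping cleaner than in the paper's write-up.
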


\begin{proof}
	(1)$\Rightarrow$ (2) It is clear that $(I(+)N) \cap(S(+)0)=\emptyset$ if and
	only if $S\cap I=\emptyset.$ Suppose that $I$ is an $S$-1-absorbing primary ideal
	of $R$ and $s$ be an $S$-element of $I.$ Let $(a,m)(b,n)(c,p)\in I(+)M$ where
	$(a,m),(b,n),(c,p)$ are non-unit elements of $R(+)M.$ Since $U(R(+)M)=U(R)(+)M,$ we
	conclude that $a,b,c$ are non-units in $R$ and $abc\in I.$ Then either $sab\in
	I$ or $sc\in\sqrt{I}.$ If $sab\in I,$ then $(s,0)(a,m)(b,n)=(sab,san+sbm)\in
	I(+)M.$ If $sc\in\sqrt{I},$ then $(s,0)(c,p)=(sc,0)\in\sqrt{I(+)M}$ due to $\sqrt{I(+)M}=\sqrt{I}(+)M$. Thus $I(+)M$ is an $S(+)0$-1-absorbing
	primary ideal of $R(+)M.$
	
	(2)$\Rightarrow$ (3) Is clear since $S(+)0\subset S(+)M.$
	
	(3)$\Rightarrow$ (1) Assume that $I(+)M$ is an $S(+)M$-1-absorbing primary
	ideal of $R(+)M$. Let $(s,m)$ be an $S(+)M$-element of $I(+)M$ and let $abc\in
	I$ for non-unit elements $ a,b,c $ of $R.$ Then $(a,0),(b,0),(c,0)$ are also non-unit elements of $R(+)M$ since $U(R(+)M)=U(R)(+)M$. Besides, we have $(a,0)(b,0)(c,0)\in I(+)M.$ So, either $(s,m)(a,0)(b,0)=(sab,abm)\in I(+)M$ or $(s,m)(c,0)\in \sqrt{I(+)M}$ which yields $sab\in I$ or $sc\sqrt{I}.$ Hence $I$ is an $S(+)M$-1-absorbing primary ideal of $R.$
\end{proof}


\begin{proposition}
	\label{id}Let $S$ be a multiplicatively closed subset of $R$ and $I$ be an ideal of $R$ with $I\cap S=\emptyset.$ Suppose $N$ is a proper submodule of $M$ such that $IM\subset N.$ Then $(1)\Longrightarrow (2)\Longrightarrow(3)$ is satisfied for the below statements:
	
	\begin{enumerate}
		\item $I(+)N$ is an $S(+)0$-1-absorbing primary ideal of $R(+)M$.
		
		\item $I(+)N$ is an $S(+)M$-1-absorbing primary ideal of $R(+)M.$
		
		\item $I$ is an $S$-1-absorbing primary ideal of $R$.
	\end{enumerate}
\end{proposition}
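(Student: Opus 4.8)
The plan is to prove the two implications straight from the definitions, leaning on the standard facts about idealizations recalled before the statement, together with a short computation of the radical of $I(+)N$ in $R(+)M$. Throughout I would keep in mind that $I(+)N$ really is an ideal of $R(+)M$ (this is the hypothesis $IM\subset N$), and that $U(R(+)M)=U(R)(+)M$, so $(a,0)$ is a non-unit of $R(+)M$ exactly when $a$ is a non-unit of $R$.

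For $(1)\Rightarrow(2)$ the key step is just a disjointness check followed by the monotonicity principle: an element $(s,m)\in S(+)M$ can lie in $I(+)N$ only if $s\in I$, which is impossible since $S\cap I=\emptyset$, so $(I(+)N)\cap(S(+)M)=\emptyset$. As $S(+)0\subseteq S(+)M$, any $S(+)0$-element of $I(+)N$ is automatically an $S(+)M$-element of $I(+)N$ (the $1$-absorbing primary requirement only weakens when the multiplicatively closed set is enlarged); this is exactly the ``$R=$module over itself'' instance of Proposition \ref{p1}(1), and it yields $(2)$.

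For $(2)\Rightarrow(3)$, let $(s,m_{0})$ be an $S(+)M$-element of $I(+)N$; note $S\cap I=\emptyset$ is assumed, so only the $S$-element property of $I$ has to be produced. Given non-unit $a,b,c\in R$ with $abc\in I$, the elements $(a,0),(b,0),(c,0)$ are non-units of $R(+)M$ and $(a,0)(b,0)(c,0)=(abc,0)\in I(+)N$ because $0\in N$. Applying $(2)$, either $(s,m_{0})(a,0)(b,0)=(sab,abm_{0})\in I(+)N$, which forces $sab\in I$, or $(s,m_{0})(c,0)=(sc,cm_{0})\in\sqrt{I(+)N}$.

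The only point needing a genuine argument is the identification $\sqrt{I(+)N}=\sqrt{I}(+)M$, and this is where $IM\subseteq N$ gets used: if $x\in\sqrt{I}$ pick $k$ with $x^{k}\in I$, then for any $y\in M$ one has $(x,y)^{k+1}=(x^{k+1},(k+1)x^{k}y)$ with $x^{k+1}\in I$ and $x^{k}y\in IM\subseteq N$, so $(x,y)^{k+1}\in I(+)N$; conversely $(x,y)\in\sqrt{I(+)N}$ gives $x^{n}\in I$ for some $n$, i.e.\ $x\in\sqrt{I}$. Hence $(sc,cm_{0})\in\sqrt{I}(+)M$ yields $sc\in\sqrt{I}$, and in both cases $s$ witnesses that $I$ is $S$-$1$-absorbing primary. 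I do not expect a real obstacle here: the work is entirely in the two disjointness checks and in the radical computation for $I(+)N$, both of which are routine once $IM\subseteq N$ is exploited.
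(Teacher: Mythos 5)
Your proposal is correct and follows essentially the same route as the paper: $(1)\Rightarrow(2)$ via enlarging the multiplicatively closed set after the disjointness check, and $(2)\Rightarrow(3)$ by testing the $S(+)M$-element on $(a,0)(b,0)(c,0)$ and reading off the first components. The only difference is that you actually verify the identity $\sqrt{I(+)N}=\sqrt{I}(+)M$ (correctly, via $(x,y)^{k+1}=(x^{k+1},(k+1)x^{k}y)$ and $IM\subseteq N$), which the paper simply invokes without proof.
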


\begin{proof}
	$(1)\Longrightarrow(2)$ It is clear since $S(+)0\subset S(+)M.$
	
	$(2)\Longrightarrow(3)$ Let $abc\in I$ for non-unit elements $a,b,c\in R.$ We
	have $(a,0)(b,0)(c,0)\in I(+)N$ which is $S(+)M$-1-absorbing primary ideal of
	$R(+)M$ where $(a,0),(b,0),(c,0)$ are non-units in $R(+)M.$ Then either
	\[
	(s,m)(a,0)(b,0)=(sab,abm)\in I(+)N
	\]
	or%
	
	\[
	(s,m)(c,0)=(sv,cm)\in\sqrt{I(+)N}=\sqrt{I}(+)M.
	\]
	where $(s,m)$ is an $S(+)M$-element of $I(+)N.$ Then either $sab\in I$ or
	$sc\in\sqrt{I}.$ Hence $I$ is an $S$-1-absorbing primary ideal of $R.$
\end{proof}

\begin{example}
	The implication $(3)\Longrightarrow(1)$ is not true in general case. If $I$ is an
	$S$-1-absorbing primary ideal, then $I(+)N$ (where $N$ a proper submodule of $M$)
	is not necessarily $S(+)0$-1-absorbing primary ideal of $R(+)M.$ Take
	$R=\mathbf{Z}=M$ and $S=\{\mp1\}.$ Then $I=\{0\}$ is an $S$-1-absorbing
	primary ideal of $\mathbf{Z},$ since $(0(+)6\mathbf{Z}:(0,1))=6\mathbf{Z}(+)\mathbf{Z}$
	is not an $S(+)0$-prime ideal. We conclude that
	$0(+)6\mathbf{Z}$ is not an $S(+)0$-1-absorbing primary ideal of $\mathbf{Z}%
	(+)\mathbf{Z}$ by Lemma \ref{lrad}.
\end{example}

\section{Amalgamation of $S$-1-absorbing primary submodules}\label{sec5}

Let $A$ and $B$ be commutative rings with unity, let $J$ be an ideal of $B$
and let $f:A\rightarrow B$ be a ring homomorphism. In this setting, we can
consider the following subring of $A\times B:$
\[
A\bowtie^{f}J:=\left\{ (a,f(a)+j)\mid a\in A, j\in J\right\}
\]
called the \textit{amalgamation of $A$ with $B$ along $J$ with respect to $f$%
}. This construction is a generalization of the amalgamated duplication of a
ring along an ideal (cf., for instance \cite{PP, AMES, BMT, MMSN, DF, MCM}).

It is clear that a subset $S$ of $A$ is a multiplicatively closed subset of
$A$ if and only if $S\bowtie^{f}J:=\{(s,f(s)+j)\mid s\in S;j\in J\}$ is a
multiplicatively closed subset of $A\bowtie^{f}J.$


\begin{lemma}
	\label{amal1}
	
	If $I$ is an ideal of $A,$ then
	\[
	\sqrt{I\bowtie^{f}J}=\sqrt{I}\bowtie^{f}J.
	\]
	
\end{lemma}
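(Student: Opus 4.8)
The plan is to prove the set equality $\sqrt{I\bowtie^{f}J}=\sqrt{I}\bowtie^{f}J$ by establishing both inclusions, working directly from the definition of the radical of an ideal and the explicit description of elements of $A\bowtie^{f}J$. Recall that a typical element of $A\bowtie^{f}J$ has the form $(a,f(a)+j)$ with $a\in A$ and $j\in J$, so $\sqrt{I}\bowtie^{f}J=\{(a,f(a)+j)\mid a\in\sqrt{I},\ j\in J\}$, and an element $(a,f(a)+j)$ of $A\bowtie^{f}J$ lies in $I\bowtie^{f}J$ if and only if $a\in I$ (since the first coordinate determines membership in $I\bowtie^{f}J$, the pair being forced once $a$ is fixed up to the choice of $j\in J$, and $I\bowtie^{f}J=\{(i,f(i)+j)\mid i\in I, j\in J\}$).

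For the inclusion $\sqrt{I}\bowtie^{f}J\subseteq\sqrt{I\bowtie^{f}J}$, I would take $(a,f(a)+j)$ with $a\in\sqrt{I}$, so $a^{n}\in I$ for some $n\geq 1$, and compute the $n$-th power $(a,f(a)+j)^{n}$ in $A\bowtie^{f}J$. Its first coordinate is $a^{n}\in I$; since $A\bowtie^{f}J$ is a ring, this power is automatically of the form $(a^{n},f(a^{n})+j')$ for some $j'\in J$ (indeed the second coordinate of any element of $A\bowtie^{f}J$ is $f$ of the first coordinate plus something in $J$, and this is preserved under multiplication), hence $(a,f(a)+j)^{n}\in I\bowtie^{f}J$ and $(a,f(a)+j)\in\sqrt{I\bowtie^{f}J}$. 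For the reverse inclusion, I would take $(a,f(a)+j)\in\sqrt{I\bowtie^{f}J}$, so $(a,f(a)+j)^{n}\in I\bowtie^{f}J$ for some $n$; looking at the first coordinate gives $a^{n}\in I$, i.e. $a\in\sqrt{I}$, and therefore $(a,f(a)+j)\in\sqrt{I}\bowtie^{f}J$.

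The main (and really the only) point requiring a little care is the bookkeeping on the second coordinate: one must check that powers of an element of $A\bowtie^{f}J$ stay inside $A\bowtie^{f}J$ and that membership in $I\bowtie^{f}J$ is genuinely detected by the first coordinate alone. The first is automatic because $A\bowtie^{f}J$ is a subring; the second follows because if $(a,f(a)+j)=(i,f(i)+j'')$ with $i\in I$ then $a=i\in I$, and conversely if $a\in I$ then writing $(a,f(a)+j)=(a, f(a)+j)$ exhibits it as an element of $I\bowtie^{f}J$ since $a\in I$ and $j\in J$. No deep obstacle arises; the statement is essentially a formal consequence of the projection $A\bowtie^{f}J\to A$, $(a,f(a)+j)\mapsto a$, being a ring homomorphism whose fibers over $I$ are exactly $I\bowtie^{f}J$, so that radicals are compatible with it.
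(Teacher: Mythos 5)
Your proposal is correct and follows essentially the same route as the paper: both directions are proved by computing $(a,f(a)+j)^{n}=(a^{n},f(a^{n})+j')$ with $j'\in J$ and observing that membership in $I\bowtie^{f}J$ is detected by the first coordinate. The only cosmetic difference is that the paper writes out $j'$ explicitly via the binomial expansion, whereas you note it is automatic because $A\bowtie^{f}J$ is a subring; both justifications are fine.
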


\begin{proof}
	
	Let $(a,f(a)+j)\in A\bowtie^{f}J$ such that $(a,f(a)+j)\in\sqrt{I\bowtie^{f}%
		J}.$ Then there exists a nonzero integer $n$ such that $(a,f(a)+j)^{n}%
	=(a^{n},f(a^{n})+j^{\prime})\in I\bowtie^{f}J$ where $j^{\prime}%
	=\displaystyle \sum_{k=0}^{n-1}\left(
	\begin{array}
	[c]{c}%
	n\\
	k\\
	\end{array}
	\right) f(a^{k})j^{n-k}\in J.$ Then $a^{n}\in I$, so $a\in\sqrt{I}$ and
	$(a,f(a)+j)\in\sqrt{I}\bowtie^{f}J$. Hence, we conclude that $\sqrt{I\bowtie^{f}J}%
	\subset\sqrt{I}\bowtie^{f}J.$ Conversely, let $(a,f(a)+j)\in\sqrt{I}%
	\bowtie^{f}J$. Then there exists $n\in\mathbb{N}^{*}$ such that $a^{n}\in I.$
	On the other hand $(a,f(a)+j)^{n}=(a^{n},f(a^{n})+j^{\prime }).$ Then
	$(a,f(a)+j)\in\sqrt{I\bowtie^{f}J}$, which yields $\sqrt{I}\bowtie
	^{f}J\subset\sqrt{I\bowtie^{f}J}.$ Finally, we conclude $\sqrt{I\bowtie^{f}J}=\sqrt
	{I}\bowtie^{f}J.$
\end{proof}

\begin{proposition}
Let $ S $ be a multiplicatively closed subset and $ I, J $ be ideals of $ A $. The following statements are equivalent:
	
	\begin{enumerate}
		\item $I$ is an $S$-1-absorbing primary ideal of $A.$
		
		\item $I\bowtie^{f}J$ is an $S\bowtie^{f}J$-1-absorbing ideal of $A\bowtie
		^{f}J.$
	\end{enumerate}
\end{proposition}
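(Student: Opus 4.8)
The plan is to prove the two implications separately, using the earlier characterization theorems to reduce everything to statements about products of ideals. For the forward direction $(1)\Rightarrow(2)$, I would begin by noting that $(I\bowtie^{f}J)\cap(S\bowtie^{f}J)=\emptyset$: if $(s,f(s)+j)$ lay in the intersection, then in particular $s\in I\cap S$, contradicting $(1)$. Let $s\in S$ be an $S$-element of $I$. Suppose $(a,f(a)+j_1)(b,f(b)+j_2)(c,f(c)+j_3)\in I\bowtie^{f}J$ for non-unit elements of $A\bowtie^{f}J$. Since the units of $A\bowtie^{f}J$ are exactly the pairs whose first coordinate is a unit of $A$ (more precisely, $(a,f(a)+j)$ is a unit iff $a\in U(A)$ and $f(a)+j\in U(B)$), one must check that the first coordinates $a,b,c$ are non-units of $A$; this needs a small argument, since it is conceivable that $a\in U(A)$ while the second coordinate is not a unit of $B$. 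This is the point where I expect to need a hypothesis (or a careful sidestep) — see the obstacle paragraph below.

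Granting that $a,b,c$ are non-units of $A$, the first coordinate gives $abc\in I$, so by $(1)$ either $sab\in I$ or $sc\in\sqrt I$. In the first case, $(s,f(s))(a,f(a)+j_1)(b,f(b)+j_2)$ has first coordinate $sab\in I$ and second coordinate of the form $f(sab)+j'$ with $j'\in J$, hence lies in $I\bowtie^{f}J$; so $(s,f(s))$ works as an $S\bowtie^{f}J$-element. In the second case, $(s,f(s))(c,f(c)+j_3)$ has first coordinate $sc\in\sqrt I$ and therefore lies in $\sqrt I\bowtie^{f}J$, which equals $\sqrt{I\bowtie^{f}J}$ by Lemma \ref{amal1}. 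Thus $I\bowtie^{f}J$ is $S\bowtie^{f}J$-1-absorbing primary with $S\bowtie^{f}J$-element $(s,f(s))$.

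For $(2)\Rightarrow(1)$, first observe that $I\cap S=\emptyset$, since $s\in I\cap S$ would give $(s,f(s))\in(I\bowtie^{f}J)\cap(S\bowtie^{f}J)$. Let $(s,f(s)+j)$ be an $S\bowtie^{f}J$-element of $I\bowtie^{f}J$. Given $abc\in I$ with $a,b,c$ non-units of $A$, form $(a,f(a)),(b,f(b)),(c,f(c))$: each is a non-unit of $A\bowtie^{f}J$ because its first coordinate is a non-unit of $A$, and their product lies in $I\bowtie^{f}J$. By $(2)$, either $(s,f(s)+j)(a,f(a))(b,f(b))\in I\bowtie^{f}J$, whose first coordinate is $sab$, giving $sab\in I$; or $(s,f(s)+j)(c,f(c))\in\sqrt{I\bowtie^{f}J}=\sqrt I\bowtie^{f}J$ (again by Lemma \ref{amal1}), whose first coordinate $sc$ then lies in $\sqrt I$. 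Hence $I$ is $S$-1-absorbing primary with $S$-element $s$.

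The main obstacle is the non-unit bookkeeping in $(1)\Rightarrow(2)$: from $(a,f(a)+j_1)$ being a non-unit of $A\bowtie^{f}J$ one cannot in general conclude $a\notin U(A)$, since the obstruction to being a unit may sit entirely in the second coordinate (for instance if $f(a)+j_1\notin U(B)$). The clean fixes are either to impose a hypothesis forcing $U(A\bowtie^{f}J)=U(A)\bowtie^{f}J$ — e.g. $J\subseteq \mathrm{Jac}(B)$ together with $f$ surjective, or more simply to work in the amalgamated duplication case $B=A$, $f=\mathrm{id}$ — or to argue directly: if some of $a,b,c$ is a unit of $A$, replace the corresponding factor by $1$ and absorb the unit, reducing to a product of fewer non-unit factors and applying the $S$-1-absorbing primary (equivalently $S$-prime on radicals, via Lemma \ref{lrad}) condition in that lower-arity form. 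I would carry out the direct argument, since it avoids extra hypotheses; the routine verifications that the relevant products land in $I\bowtie^{f}J$ or in $\sqrt I\bowtie^{f}J$ are then immediate from the componentwise multiplication formula and Lemma \ref{amal1}.
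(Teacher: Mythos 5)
Your overall strategy coincides with the paper's: reduce both directions to first coordinates, using Lemma \ref{amal1} to identify $\sqrt{I\bowtie^{f}J}$ with $\sqrt{I}\bowtie^{f}J$, together with the observation that membership of an element of $A\bowtie^{f}J$ in $I\bowtie^{f}J$ (resp.\ $\sqrt{I}\bowtie^{f}J$) is detected by its first coordinate. Your treatment of $(2)\Rightarrow(1)$ is exactly the paper's argument and is correct. For $(1)\Rightarrow(2)$ you have put your finger on the real difficulty: a non-unit $(a,f(a)+j)$ of $A\bowtie^{f}J$ can have $a\in U(A)$, so one cannot simply apply the definition of an $S$-$1$-absorbing primary ideal to $abc\in I$. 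The paper's own proof stumbles at exactly this point: it argues only that \emph{at least one} of $a,b,c$ is a non-unit, whereas the definition requires all three to be non-units before it says anything.

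However, the repair you say you would carry out --- absorb the unit factor and apply a ``lower-arity'' form of the condition --- does not work, and in fact the implication $(1)\Rightarrow(2)$ is false without extra hypotheses. If, say, $a\in U(A)$, what you would need is: $bc\in I$ implies $sb\in I$ or $sc\in\sqrt{I}$; that is precisely the $S$-primary condition, which is genuinely stronger than $S$-$1$-absorbing primary (by Theorem \ref{Tq} the two notions can differ only over a quasilocal ring, and there they do differ). Concretely, take $A=K[[x,y]]$, $I=(x^{2},xy)$ and $S=\{1\}$; then $I$ is $1$-absorbing primary, but $xy\in I$ with $x\notin I$ and $y\notin\sqrt{I}=(x)$. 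Let $B=A\times A$, $f$ the diagonal map and $J=A\times 0$. Then $(1,(0,1))$, $(x,(x,x))$, $(y,(y,y))$ are non-units of $A\bowtie^{f}J$ whose product $(xy,(0,xy))$ lies in $I\bowtie^{f}J$, yet no element of $S\bowtie^{f}J$ multiplies $(1,(0,1))(x,(x,x))$ into $I\bowtie^{f}J$ (its first coordinate is $x\notin I$) or $(y,(y,y))$ into $\sqrt{I\bowtie^{f}J}$ (its first coordinate is $y\notin(x)$). So the clean fix is the one you mention first: impose a hypothesis guaranteeing that units of $A$ lift to units of $A\bowtie^{f}J$, e.g.\ $J\subseteq\mathrm{Jac}(B)$ (then $(a,f(a)+j)$ is a unit precisely when $a$ is), or restrict to non-quasilocal $A$, where Corollary \ref{cq} makes $I$ an $S$-primary ideal and the unit-first-coordinate case goes through. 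As written, neither your argument nor the paper's establishes $(1)\Rightarrow(2)$.
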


\begin{proof}
	
	Assume that $I\bowtie^{f}J$ is an $S\bowtie^{f}J$-1-absorbing primary ideal of
	$A\bowtie^{f}J$ and let $(s,f(s)+j)$ be an $S\bowtie^{f}J$-element of
	$I\bowtie^{f}J.$ Take $abc\in I$ for non-unit elements $a,b,c$ of $A.$ Then
	$(a,f(a)),(b,f(b)),(c,f(c))$ are also non-unit elements of $A\bowtie^{f}J$ and
	$(a,f(a))(b,f(b))(c,f(c))\in I\bowtie^{f}J.$ Since $I\bowtie^{f}J$ is an
	$S\bowtie^{f}J$-1-absorbing primary submodule, we get either
	$(s,f(s)+j)(a,f(a))(b,f(b))\in I\bowtie^{f}J$ or $(s,f(s)+j)(c,f(c))\in
	\sqrt{I\bowtie^{f}J}=\sqrt{I}\bowtie^{f}J$ by Lemma \ref{amal1}. Then
	either $sab\in I$ or $sc\in\sqrt{I}.$  Thus $I$ is an $S$-1-absorbing primary
	ideal of $A.$
	
	Conversely, suppose that $I$ is an $S$-1-absorbing primary ideal of $A$ and let
	$s\in S$ be an $S$-element of $I.$ Let $(a,f(a)+j),(b,f(b)+k),(c,f(c)+l)$ be
	non-unit elements of $A\bowtie^{f}J$ such that
	$(a,f(a)+j)(b,f(b)+k)(c,f(c)+l)\in I\bowtie^{f}J.$ Then $abc\in I.$It is necessary that one of the $a,b,c$ is non-unit, otherwise, $abc$ is unit and $abc\in I,$ a contradiction. Since $I$ is an $S$-1-absorbing primary submodule of $A,$ we get either
	$sab\in I$ or $sc\in\sqrt{I}$ which gives either $(s,f(s))(a,f(a)+j)(b,f(b)+k)\in
	I\bowtie^{f}J$ or $(s,f(s))(c,f(c)+l)\in\sqrt{I}\bowtie^{f}J=\sqrt
	{I\bowtie^{f}J}$ by Lemma \ref{amal1}. Hence $I\bowtie^{f}J$ is an
	$S\bowtie^{f}J$-1-absorbing primary ideal of $A\bowtie^{f}J.$
\end{proof}

\begin{corollary}
	Let $R$ be a ring and $I, K$ be ideals of $R$ with $K\cap S=\emptyset.$ Then
	the following statements are equivalent:
	
	\begin{enumerate}
		\item $K$ is an $S$-1-absorbing primary ideal of $R.$
		
		\item $K\bowtie I$ is an $S\bowtie I$-1-absorbing primary ideal of $R\bowtie I.$
	\end{enumerate}
\end{corollary}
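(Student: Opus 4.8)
The plan is to deduce this corollary directly from the preceding proposition by recognizing the amalgamated duplication $R\bowtie I$ as a special case of the general amalgamation $A\bowtie^{f}J$. Specifically, I would take $A=B=R$, let $f=\mathrm{id}_{R}:R\rightarrow R$ be the identity homomorphism, and set $J=I$. Then $A\bowtie^{f}J=\{(a,a+j)\mid a\in R, j\in I\}=\{(a,b)\mid a-b\in I\}$, which is precisely the amalgamated duplication $R\bowtie I$. Likewise, with $K$ playing the role of the ideal called "$I$" in the proposition, we have $K\bowtie^{f}I=K\bowtie I$ and $S\bowtie^{f}I=S\bowtie I$. The hypothesis $K\cap S=\emptyset$ is exactly what is needed so that $S\bowtie I$ is disjoint from $K\bowtie I$ (equivalently, so that $(K\bowtie I)\cap(S\bowtie I)=\emptyset$), matching the standing disjointness assumption in the proposition.

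With these identifications in hand, the two statements of the corollary become verbatim instances of the two statements of the proposition: "$K$ is an $S$-1-absorbing primary ideal of $R$" is the proposition's (1) with $A=R$, and "$K\bowtie I$ is an $S\bowtie I$-1-absorbing primary ideal of $R\bowtie I$" is the proposition's (2) with $f=\mathrm{id}$ and $J=I$. Hence I would simply invoke the proposition to conclude the equivalence. The only genuine content of the proof is the observation that the amalgamation construction specializes to the duplication when $f$ is the identity and $J$ is an ideal of $R$; everything else is automatic.

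I do not anticipate a real obstacle here; the corollary is a direct unwinding of definitions. The one point worth stating explicitly (so the reader is not left checking it) is the disjointness: since $f(s)+j=s+j$ for $j\in I$, an element $(s,s+j)\in S\bowtie I$ lies in $K\bowtie I$ only if $s\in K$ (because then $s+j\in K$ forces $s\in K$ as $j\in I\subseteq$ — wait, this needs $K$ related to $I$). In fact the cleanest route is: $(s,s+j)\in K\bowtie I$ means $s\in K$ and $s+j\in K$; the first already contradicts $K\cap S=\emptyset$. So $(K\bowtie I)\cap(S\bowtie I)=\emptyset$, and the proposition applies with no further hypotheses. I would write the proof in two or three sentences: set up $A=B=R$, $f=\mathrm{id}$, $J=I$; note $R\bowtie^{\mathrm{id}}I=R\bowtie I$, $S\bowtie^{\mathrm{id}}I=S\bowtie I$, and the disjointness; then cite the preceding proposition.
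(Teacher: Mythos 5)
Your proposal is correct and is essentially the paper's own proof: the authors likewise just verify $(K\bowtie I)\cap(S\bowtie I)=\emptyset$ and invoke the preceding amalgamation proposition, the duplication being the special case $A=B=R$, $f=\mathrm{id}$, $J=I$. (Minor nit: $(s,s+j)\in K\bowtie I$ requires only $s\in K$, not also $s+j\in K$, but as you note the first condition already gives the contradiction.)
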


\begin{proof}
	It is easy to see that $(K\bowtie I)\cap(S\bowtie I)=\emptyset.$ The rest of
	the proof follows from the previous proposition.
\end{proof}

Let $S_{2}$ be a multiplicatively closed subset of the ring $f(A)+J.$ Consider the set
\[
\overline{S_{2}}^{f}:=\{(r,f(r)+j)\mid j\in J, r\in A, f(r)+j\in S_{2}\}.
\]
It is clear that $\overline{S_{2}}^{f}$ is a multiplicatively closed subset of
$A\bowtie^{f}J.$

If $K$ is an ideal of $f(A)+J$, then the set
\[
\overline{K}^{f}:=\{(a,f(a)+j)\mid a\in A,j\in J,f(a)+j\in K\}
\]
is an ideal of $A\bowtie^{f}J.$

\begin{lemma}
	\label{amal2}
	
	Under the above notations, we have
	\[
	\sqrt{\overline{K}^{f}}=\overline{\sqrt{K}}^{f}.
	\]
	
\end{lemma}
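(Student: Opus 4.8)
The plan is to show the two set inclusions $\sqrt{\overline{K}^{f}}\subseteq\overline{\sqrt{K}}^{f}$ and $\overline{\sqrt{K}}^{f}\subseteq\sqrt{\overline{K}^{f}}$, mimicking the argument already used in Lemma \ref{amal1}. The essential computational fact is that for $(a,f(a)+j)\in A\bowtie^{f}J$ and any positive integer $n$ we have
\[
(a,f(a)+j)^{n}=(a^{n},(f(a)+j)^{n}),
\]
and, since $f$ is a ring homomorphism, $(f(a)+j)^{n}=f(a^{n})+j'$ for some $j'\in J$; in particular $(f(a)+j)^{n}\in K$ is equivalent to $f(a^{n})+j'\in K$, so membership of the $n$-th power in $\overline{K}^{f}$ is governed coordinatewise by $a^{n}$ (via $A$) and by $(f(a)+j)^{n}$ (via the multiplicatively closed behaviour inside $f(A)+J$).

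First I would take $(a,f(a)+j)\in\sqrt{\overline{K}^{f}}$. Then $(a,f(a)+j)^{n}=(a^{n},(f(a)+j)^{n})\in\overline{K}^{f}$ for some $n\in\mathbb{N}^{*}$. By definition of $\overline{K}^{f}$ this means $a^{n}\in A$ (automatic), $(f(a)+j)^{n}\in J$-part is fine, and crucially $f(a^{n})+j'=(f(a)+j)^{n}\in K$, i.e. $(f(a)+j)^{n}\in K$, so $f(a)+j\in\sqrt{K}$. Hence $(a,f(a)+j)\in\overline{\sqrt{K}}^{f}$, giving the first inclusion. Conversely, let $(a,f(a)+j)\in\overline{\sqrt{K}}^{f}$, so $f(a)+j\in\sqrt{K}$ and there is $n\in\mathbb{N}^{*}$ with $(f(a)+j)^{n}\in K$. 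Then $(a,f(a)+j)^{n}=(a^{n},(f(a)+j)^{n})$, and since the second coordinate lies in $K$ while the element has the correct form $(a^{n},f(a^{n})+j')$ with $j'\in J$, we get $(a,f(a)+j)^{n}\in\overline{K}^{f}$, whence $(a,f(a)+j)\in\sqrt{\overline{K}^{f}}$. Combining the two inclusions yields $\sqrt{\overline{K}^{f}}=\overline{\sqrt{K}}^{f}$.

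The only point requiring mild care — the main (small) obstacle — is bookkeeping of the $J$-component: one must verify that the binomial-type expansion keeps the element inside $A\bowtie^{f}J$ (i.e. that the second coordinate really has the shape $f(a^{n})+j'$ with $j'\in J$, using that $J$ is an ideal of $B$), exactly as in the proof of Lemma \ref{amal1}; once that is observed, the radical condition reduces to the condition $(f(a)+j)^{n}\in K$ in the ring $f(A)+J$, and there is nothing further to check.
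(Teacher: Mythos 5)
Your proof is correct. The paper in fact states Lemma \ref{amal2} without proof (evidently regarding it as analogous to Lemma \ref{amal1}), and your argument is exactly the intended one: since $(a,f(a)+j)^{n}=(a^{n},(f(a)+j)^{n})$ and membership in $\overline{K}^{f}$ is detected purely by the second coordinate lying in $K$, both inclusions reduce to the statement that $(f(a)+j)^{n}\in K$ for some $n$ iff $f(a)+j\in\sqrt{K}$ (the radical being taken in $f(A)+J$), with the binomial bookkeeping $(f(a)+j)^{n}=f(a^{n})+j'$, $j'\in J$, handled as in Lemma \ref{amal1}.
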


\begin{proposition}
	Let $f:A\longrightarrow B$ be a ring homomorphism, $J, K$ be ideals of $B$ and
	$S_{2}$ be a multiplicatively closed subset of $f(A)+J$ with $K\cap
	S_{2}=\emptyset.$ Let
	\[
	\overline{K}^{f}=\{(a,f(a)+j)\mid a\in A, j\in J, f(a)+j\in K\}.
	\]
	Then the following statements are equivalent:
	
	\begin{enumerate}
		\item $K$ is an $S_{2}$-1-absorbing primary ideal of $f(A)+J.$
		
		\item $\overline{K}^{f}$ is an $\overline{S}^{f}_{2}$-1-absorbing primary ideal
		of $A\bowtie^{f}J.$
	\end{enumerate}
\end{proposition}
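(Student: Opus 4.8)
The plan is to mimic the argument used for the idealization and for the amalgamation $A\bowtie^f J$ in the two previous propositions, the only new technical ingredient being the identity $\sqrt{\overline{K}^{f}}=\overline{\sqrt{K}}^{f}$ supplied by Lemma \ref{amal2}. First I would record the elementary observations that are needed throughout: $\overline{K}^{f}\cap\overline{S_{2}}^{f}=\emptyset$ holds precisely because $K\cap S_{2}=\emptyset$, and that an element $(a,f(a)+j)$ of $A\bowtie^f J$ is a non-unit whenever $f(a)+j$ is a non-unit of $f(A)+J$ (this is the analogue of $U(R(+)M)=U(R)(+)M$ and of the non-unit bookkeeping done in the $A\bowtie^f J$ proposition). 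I would also note that multiplication in $A\bowtie^f J$ is compatible with the projection onto the second coordinate $B$, so that products of elements of $\overline{K}^{f}$ land in $\overline{K}^{f}$ in a way controlled entirely by the second coordinates.

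For the implication $(2)\Rightarrow(1)$, let $(s,f(s)+j_0)$ be an $\overline{S_{2}}^{f}$-element of $\overline{K}^{f}$, and suppose $xyz\in K$ for non-unit elements $x,y,z$ of $f(A)+J$; write $x=f(a)+j$, $y=f(b)+k$, $z=f(c)+l$ with $a,b,c\in A$ and $j,k,l\in J$. Then $(a,x)(b,y)(c,z)\in\overline{K}^{f}$, and since each of these three elements is a non-unit of $A\bowtie^f J$, the $\overline{S_{2}}^{f}$-1-absorbing primary property gives $(s,f(s)+j_0)(a,x)(b,y)\in\overline{K}^{f}$ or $(s,f(s)+j_0)(c,z)\in\sqrt{\overline{K}^{f}}=\overline{\sqrt{K}}^{f}$ by Lemma \ref{amal2}. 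Reading off the second coordinates yields $(f(s)+j_0)xy\in K$ or $(f(s)+j_0)z\in\sqrt{K}$, which is exactly what is required for $K$ to be $S_{2}$-1-absorbing primary with $S_{2}$-element $f(s)+j_0$.

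For the converse $(1)\Rightarrow(2)$, let $t\in S_{2}$ be an $S_{2}$-element of $K$; choose $r_0\in A$ and $j_0\in J$ with $f(r_0)+j_0=t$, so $(r_0,t)\in\overline{S_{2}}^{f}$. Suppose $(a,f(a)+j)(b,f(b)+k)(c,f(c)+l)\in\overline{K}^{f}$ for non-unit elements of $A\bowtie^f J$. Setting $x=f(a)+j$ etc., we get $xyz\in K$, and (as in the $A\bowtie^f J$ proof) at least one of $x,y,z$ is a non-unit of $f(A)+J$, since otherwise $xyz$ would be a unit lying in $K$, contradicting $K\cap S_{2}=\emptyset$. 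If a different one of the three is the non-unit, one permutes the roles and uses the symmetric form of the definition; I would treat the case where $z$ is non-unit and remark that the others are identical after relabeling. Applying that $K$ is $S_{2}$-1-absorbing primary gives $txy\in K$ or $tz\in\sqrt{K}$, and multiplying the corresponding triples in $A\bowtie^f J$ by $(r_0,t)$ and using Lemma \ref{amal2} again, we obtain $(r_0,t)(a,f(a)+j)(b,f(b)+k)\in\overline{K}^{f}$ or $(r_0,t)(c,f(c)+l)\in\overline{\sqrt{K}}^{f}=\sqrt{\overline{K}^{f}}$. Hence $\overline{K}^{f}$ is $\overline{S_{2}}^{f}$-1-absorbing primary with $\overline{S_{2}}^{f}$-element $(r_0,t)$.

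The main obstacle, and the only point that is not purely formal bookkeeping, is making the passage between non-units of $f(A)+J$ and non-units of $A\bowtie^f J$ fully correct: one must be careful that ``non-unit in $A\bowtie^f J$'' does not force more than ``non-unit in $f(A)+J$'' in the direction needed for $(2)\Rightarrow(1)$, and conversely that in $(1)\Rightarrow(2)$ one really can extract a non-unit second coordinate to feed into the definition of an $S_{2}$-1-absorbing primary ideal. Everything else reduces to comparing second coordinates and to one application of Lemma \ref{amal2} on each side; the disjointness hypotheses $K\cap S_{2}=\emptyset$ and $\overline{K}^{f}\cap\overline{S_{2}}^{f}=\emptyset$ are equivalent and are what makes the definitions applicable.
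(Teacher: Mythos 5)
Your direction $(2)\Rightarrow(1)$ matches the paper's argument and is correct: a non-unit $f(a)+j$ of $f(A)+J$ lifts to a non-unit $(a,f(a)+j)$ of $A\bowtie^{f}J$, so the three factors can be fed into the hypothesis on $\overline{K}^{f}$, and the conclusion is read off from the second coordinates together with Lemma \ref{amal2}; the disjointness bookkeeping is also as in the paper.

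The problem is in $(1)\Rightarrow(2)$, at exactly the point you flag as ``the main obstacle'' without actually resolving it. From three non-units $(a,x),(b,y),(c,z)$ of $A\bowtie^{f}J$ you only deduce that \emph{at least one} of the second coordinates $x,y,z$ is a non-unit of $f(A)+J$, but the definition of an $S_{2}$-$1$-absorbing primary ideal can be applied to $xyz\in K$ only when \emph{all three} are non-units. Your proposed remedy --- ``permute the roles and use the symmetric form of the definition'' --- is not available, because the $1$-absorbing primary condition is not symmetric in its three arguments: the conclusion distinguishes the pair $xy$ from the third factor $z$. The only genuinely easy exceptional case is when $z$ is a unit of $f(A)+J$, since then $xy=z^{-1}(xyz)\in K$ and hence $txy\in K$. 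If instead $x$ (say) is a unit of $f(A)+J$ while $(a,x)$ is a non-unit of $A\bowtie^{f}J$ --- which occurs whenever $a$ is a non-unit of $A$ but $f(a)+j$ is invertible in $f(A)+J$, e.g.\ always when $J=B$ --- the required conclusion becomes ``$ty\in K$ or $tz\in\sqrt{K}$'' given only $yz\in K$ with $y,z$ non-units, and this primary-type condition is not implied by the $1$-absorbing primary property (take $K=(u^{2},uv)$ in $k[[u,v]]$, $y=u$, $z=v$, $S_{2}=\{1\}$). So this case is neither vacuous nor purely formal; it needs either an extra hypothesis ruling it out or a separate argument. For what it is worth, the paper's own proof dismisses the same case with ``the claim is clear,'' so you have reproduced the published argument, gap included --- but the gap is real and the relabeling suggestion does not close it.
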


\begin{proof}
	
	Assume that $\overline{K}^{f}$ is an $\overline{S}^{f}$-1-absorbing primary
	ideal of $A\bowtie^{f}J$ and let $(s,f(s)+p)$ be an $\overline{S}^{f}$-element of
	$\overline{K}^{f}.$ Let $(f(a)+j)(f(b)+k)(f(c)+l)\in K$ for non-unit elements
	$f(a)+j,f(b)+k,f(c)+l\in f(A)+J.$ Then $(a,f(a)+j),(b,f(b)+k),(c,f(c)+l)$ are
	non-unit elements of $A\bowtie^{f}J$ and $(a,f(a)+j)(b,f(b)+k)(c,f(c)+l)
	\in\overline{K}^{f}.$ Thus, we have either $(s,f(s)+p)(a,f(a)+j)(b,f(b)+k)\in
	\overline{K}^{f}$ or $(s,f(s)+p)(c,f(c)+l)\in\sqrt{\overline{K}^{f}}.$ In the
	first case, we have $(f(s)+p)(f(a)+j)(f(b)+k)\in K.$ In the second case, we have
	$(f(s)+p)(f(c)+k)\in\sqrt{\overline{K}^{f}}=\overline{\sqrt{K}}^{f}$ by Lemma \ref{amal2} and hence
	$(f(s)+p)(f(c)+k)\in\sqrt{K}.$ We conclude that $K$ is an $S_{2}$-1-absorbing
	primary ideal of $B.$ Conversely, suppose that $K$ is an $S_{2}$-1-absorbing
	primary ideal of $B$ and let $f(s)+p$ an $S_{2}$-element of $K.$ Let
	$(a,f(a)+j)(b,f(b)+k)(c,f(c)+l)\in\overline{K}^{f}$ for non-unit elements
	$(a,f(a)+j),(b,f(b)+k),(c,f(c)+l)\in A\bowtie^{f}J.$ Then we get $(abc,
	(f(a)+j)(f(b)+k)(f(c)+l))\in\overline{K}^{f}.$ Now suppose that $f(a)+j, f(b)+k,
	f(c)+l$ are non-unit elements of $f(A)+J.$ Since $K$ is an $S_{2}$-1-absorbing primary
	ideal of $f(A)+J,$ we conclude that either $(f(s)+p)(f(a)+j)(f(b)+k)\in K$ or
	$(f(s)+p)(f(c)+l)\in\sqrt{K}.$ In the first case, we have
	$(s,f(s)+p)(a,f(a)+j)(b,f(b)+k)\in\overline{K}^{f}.$ In the second case, we
	have $(s,f(s)+p)(c,f(c)+l)\in\sqrt{\overline{K}^{f}}.$ Now, if one of $f(a)+j, f(b)+k, f(c)+l$ is unit, then the claim is clear. Hence
	$\overline{K}^{f}$ is an $\overline{S_{2}}^{f}$-1-absorbing primary ideal of
	$A\bowtie^{f}J.$
\end{proof}

%

An $R$-module can be regarded as an $R\bowtie I$-module via the natural projection $R\bowtie I\longrightarrow R; (r,r+i)\longmapsto r,$ and an $R\bowtie I$-module can also be regarded as an $R$-module via the natural injection $\iota:R\longrightarrow
R\bowtie I; r\longmapsto(r,r).$


Let $R$ be a ring, $I$ an ideal of $R$, $M$ an $R$-module and set
\[
M\bowtie I:=\{(m,m^{\prime})\in M\times M\mid m-m^{\prime}\in IM\},
\]
which is an $R\bowtie I$-module with the multiplication given by
$$ (r,r+i)(m,m^{\prime})=(rm,(r+i)m^{\prime}) $$
where $ r\in R $, $ i\in I $ and $ (m,m^{\prime})\in M\bowtie I. $
Then the set $M\bowtie I$ is called the \textit{duplication of the $R$-module $M$ along the
	ideal $I.$}

Let $N$ be a submodule of $M.$ It is clear that
\[
N\bowtie I:\{(n,m)\in N\times M\mid n-m\in IM\}
\]
and
\[
\overline{N}:=\{(m,n)\in M\times N\mid n-m\in IM\}
\]
are submodules of $M\bowtie I.$ If $S$ is a multiplicatively closed subset of
$R,$ then it is obvious that the sets $S\bowtie I:=\{(s,s+i)\mid s\in S,i\in I\}$ and
$\overline{S}:=\{(s,s+i)\mid i\in I,s+i\in S\}$ are multiplicatively closed
subsets of $R\bowtie I.$ 
\begin{lemma}
	\label{lduplication1}
	
	Let $N$ be a submodule of $M.$ Then
	\[
	(N\bowtie I:_{R\bowtie I}M\bowtie I)=(N:_{R}M)\bowtie I.
	\]
	
\end{lemma}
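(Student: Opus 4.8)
The plan is to prove the identity $(N\bowtie I:_{R\bowtie I}M\bowtie I)=(N:_{R}M)\bowtie I$ by double inclusion, working directly from the definitions of the duplication constructions. Recall that $M\bowtie I=\{(m,m')\in M\times M\mid m-m'\in IM\}$ as an $R\bowtie I$-module, that $N\bowtie I=\{(n,m)\in N\times M\mid n-m\in IM\}$, and that $(N:_{R}M)\bowtie I=\{(r,r+i)\mid r\in(N:_{R}M),\ i\in I\}$ since $(N:_{R}M)$ is an ideal of $R$. The key observation is that the action of $(r,r+i)\in R\bowtie I$ on $(m,m')\in M\bowtie I$ is $(rm,(r+i)m')$, so membership of a colon element is governed componentwise.

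First I would prove the inclusion $(N:_{R}M)\bowtie I\subseteq(N\bowtie I:_{R\bowtie I}M\bowtie I)$. Take $(r,r+i)$ with $r\in(N:_{R}M)$ and $i\in I$, and let $(m,m')\in M\bowtie I$, so $m-m'\in IM$. Then $(r,r+i)(m,m')=(rm,(r+i)m')$. Since $r\in(N:_{R}M)$ we have $rm\in N$; moreover $(r+i)m'=rm'+im'$, and I need to check $rm-(r+i)m'\in IM$ to confirm the product lies in $N\bowtie I$. Compute $rm-(r+i)m'=r(m-m')-im'\in IM$ because $m-m'\in IM$ and $i\in I$. It remains to see $(r+i)m'\in M$ (automatic) and that the pair $(rm,(r+i)m')$ has first coordinate in $N$ and satisfies the difference condition — both just verified — so the product is in $N\bowtie I$, giving this inclusion.

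Next I would handle the reverse inclusion $(N\bowtie I:_{R\bowtie I}M\bowtie I)\subseteq(N:_{R}M)\bowtie I$. Every element of $R\bowtie I$ has the form $(r,r+i)$ with $r\in R$, $i\in I$; suppose $(r,r+i)(M\bowtie I)\subseteq N\bowtie I$. The point is to extract from this that $rM\subseteq N$, i.e.\ $r\in(N:_{R}M)$; then automatically $(r,r+i)\in(N:_{R}M)\bowtie I$. To get $rM\subseteq N$, I would evaluate the colon condition on well-chosen elements of $M\bowtie I$: for any $m\in M$, the pair $(m,m)$ lies in $M\bowtie I$ (since $m-m=0\in IM$), so $(r,r+i)(m,m)=(rm,(r+i)m)\in N\bowtie I$, which forces the first coordinate $rm\in N$. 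As $m\in M$ was arbitrary, $rM\subseteq N$, so $r\in(N:_{R}M)$, completing the inclusion and hence the equality.

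The main obstacle is essentially bookkeeping: one must be careful that the "difference in $IM$" condition is genuinely preserved by the module action and not merely the first-coordinate membership, and one must use the right test elements (the diagonal pairs $(m,m)$) to strip off the second coordinate in the reverse direction. No deep result is needed — only the explicit module structure on $M\bowtie I$ and the fact that $(N:_{R}M)$ is an ideal so that $(N:_{R}M)\bowtie I$ makes sense as written. I would present it cleanly as the two short inclusions above.
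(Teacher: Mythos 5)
Your proof is correct and follows essentially the same route as the paper's: both directions are checked directly from the definitions, with the same computation $r(m-m')-im'\in IM$ for one inclusion and the observation $rM\subseteq N$ for the other. Your version is slightly more careful than the paper's, which asserts ``clearly $aM\subset N$'' where you make the use of the diagonal test elements $(m,m)$ explicit.
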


\begin{proof}
	Let $(a,a+i)\in(N\bowtie I:_{R\bowtie I}M\bowtie I)$. Then $(a,a+i)(M\bowtie I)\subset N\bowtie I.$ Clearly,
	$aM\subset N$ and thus $(a,a+i)\in(N:_{R}M)\bowtie I.$ Conversely, suppose
	that $(a,a+i)\in(N:_{R}M)\bowtie I$ and take $(m,m^{\prime})\in M\bowtie I.$
	We have $(a,a+i)(m,m^{\prime})=(am,(a+i)m^{\prime})\in M\times M$ and
	$a(m-m^{\prime})-im^{\prime}\in IM$ since $m-m^{\prime}\in IM.$ Hence
	$(a,a+i)\in (N\bowtie I:_{R\bowtie I}M\bowtie I).$ Thus, we conclude that $(N\bowtie
	I:_{R\bowtie I}M\bowtie I)=(N:_{R}M)\bowtie I.$
\end{proof}

\begin{lemma}
	\label{lduplication2}
	
	Let $L$ be a submodule of $M.$ Then the following statements are equivalent:
	
	\begin{enumerate}
		\item $L$ is a prime submodule of $M.$
		
		\item $L\bowtie I$ is a prime submodule of $M\bowtie I.$
	\end{enumerate}
\end{lemma}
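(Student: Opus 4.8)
The plan is to prove both implications directly from the definition of a prime submodule, using Lemma~\ref{lduplication1} to pass between residuals in $M\bowtie I$ and residuals in $M$. Recall that a submodule $P$ of a module $X$ is prime precisely when $P\ne X$ and, whenever $rx\in P$ with $r$ a scalar and $x\in X$, one has $x\in P$ or $rX\subseteq P$.

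For $(1)\Rightarrow(2)$, I would first verify properness: picking $m\in M\setminus L$ gives $(m,m)\in(M\bowtie I)\setminus(L\bowtie I)$, so $L\bowtie I\ne M\bowtie I$. Then, given $(r,r+i)\in R\bowtie I$ and $(m,m')\in M\bowtie I$ with $(r,r+i)(m,m')=(rm,(r+i)m')\in L\bowtie I$, the definition of $L\bowtie I$ forces $rm\in L$, so primeness of $L$ gives $m\in L$ or $r\in(L:_{R}M)$. In the first case one notes that $m-m'\in IM$ holds automatically since $(m,m')\in M\bowtie I$, hence $(m,m')\in L\bowtie I$; in the second case Lemma~\ref{lduplication1} yields $(r,r+i)\in(L:_{R}M)\bowtie I=(L\bowtie I:_{R\bowtie I}M\bowtie I)$. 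Either way the primeness condition for $L\bowtie I$ holds.

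For $(2)\Rightarrow(1)$, I would observe that $L=M$ would make $L\bowtie I=M\bowtie I$, contradicting properness, so $L$ is a proper submodule of $M$. Then for $r\in R$ and $m\in M$ with $rm\in L$, the element $(r,r)(m,m)=(rm,rm)$ lies in $L\bowtie I$ (its two coordinates differ by $0\in IM$), so primeness of $L\bowtie I$ gives $(m,m)\in L\bowtie I$, whence $m\in L$, or else $(r,r)\in(L\bowtie I:_{R\bowtie I}M\bowtie I)=(L:_{R}M)\bowtie I$ by Lemma~\ref{lduplication1}, whence $r\in(L:_{R}M)$. Thus $L$ is prime.

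I do not anticipate a real obstacle; the argument is a straightforward unwinding of definitions. The two points needing care are the properness checks and the observation that the second coordinate of an element of $M\bowtie I$ already satisfies the $IM$-constraint appearing in the definition of $L\bowtie I$, so that membership of the first coordinate in $L$ is all that is required. The actual leverage comes from Lemma~\ref{lduplication1}, which lets each implication move freely between the module-level condition and the corresponding residual-ideal condition.
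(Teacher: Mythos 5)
Your proof is correct and follows essentially the same route as the paper's: both directions reduce membership in $L\bowtie I$ to membership of the first coordinate in $L$, use the diagonal elements $(a,a)$, $(m,m)$ for one implication and general elements $(a,a+i)$, $(m,m')$ for the other, and invoke Lemma~\ref{lduplication1} to identify $(L\bowtie I:_{R\bowtie I}M\bowtie I)$ with $(L:_{R}M)\bowtie I$. Your version is slightly more careful in that it records the properness checks, which the paper omits.
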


\begin{proof}
	Assume that $L\bowtie I$ is a prime submodule of $M\bowtie I.$ Let $am\in L$
	for $a\in R$ and $m\in M.$ Then $(a,a)(m,m)\in L\bowtie I.$ Since $L\bowtie I$
	is a prime submodule of $M\bowtie I,$ we get either $(a,a)\in(L\bowtie
	I:_{R\bowtie I}M\bowtie I)$ which implies $a\in (N:M)$ by Lemma
	\ref{lduplication1}, or $(m,m)\in L\bowtie I$ which follows $m\in L.$ Hence $L$
	is a prime submodule of $M.$ Conversely, assume that $L$ is a prime submodule of $ M $
	and let $(a,a+i)(m,m^{\prime})\in L\bowtie I$ for $(a,a+i)\in R\bowtie I$ and
	$(m,m^{\prime})\in M\bowtie I.$ Thus $(am,(a+i)m^{\prime})\in L\bowtie I$ and we
	conclude that $am\in L.$ Due to the fact that L is a prime submodule, we obtain
	either $a\in(N:M)$ or $m\in L.$ In the first case, we have $(a,a+i)\in
	(N:M)\bowtie I=(N\bowtie I:_{R\bowtie I}M\bowtie I).$ In the second case, we
	have $(m,m^{\prime})\in L\bowtie I,$ since $m-m^{\prime}\in IM.$ Thus,
	$L\bowtie I$ is a prime submodule of $M\bowtie I.$
\end{proof}

\begin{lemma}
	\label{lduplication3} Let $N$ be a submodule of $M.$ Then the prime submodule
	which contains $N\bowtie I$ are exactly of the form $L\bowtie I$ where $L$ is
	a prime submodule containing $N.$ Moreover, $ M\bowtie I$-$rad(N\bowtie I)=M$-$rad(N)\bowtie I. $

\end{lemma}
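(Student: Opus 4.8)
The plan is to prove that the prime submodules of $M\bowtie I$ containing $N\bowtie I$ are precisely the submodules of the form $L\bowtie I$ with $L$ a prime submodule of $M$ containing $N$, and then to obtain the formula for $M\bowtie I$-$rad(N\bowtie I)$ by intersecting this family. One inclusion is immediate: if $L$ is a prime submodule of $M$ with $N\subseteq L$, then $L\bowtie I$ is a prime submodule of $M\bowtie I$ by Lemma \ref{lduplication2}, and it contains $N\bowtie I$ since every $(n,m)\in N\bowtie I$ satisfies $n\in N\subseteq L$ and $n-m\in IM$.

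For the reverse inclusion, fix an arbitrary prime submodule $P$ of $M\bowtie I$ with $N\bowtie I\subseteq P$. Since the $R$-module structure on $M\bowtie I$ coming from the injection $\iota:R\to R\bowtie I$ is given by $r\cdot(m,m')=(rm,rm')$, the first-coordinate projection $\pi_{1}:M\bowtie I\to M$ is $R$-linear, so $L:=\pi_{1}(P)$ is a submodule of $M$. I claim $P=L\bowtie I$. The inclusion $P\subseteq L\bowtie I$ is clear, because $(m,m')\in P$ forces $m=\pi_{1}(m,m')\in L$ and $m-m'\in IM$. For $L\bowtie I\subseteq P$, take $(m,m')\in L\bowtie I$ and choose $m''\in M$ with $(m,m'')\in P$; then $m''-m'=(m-m')-(m-m'')\in IM$, so $(0,m''-m')$ lies in $\{0\}\times IM\subseteq N\bowtie I\subseteq P$, whence $(m,m')=(m,m'')-(0,m''-m')\in P$. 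Finally, $N\subseteq L$ because $(n,n)\in N\bowtie I\subseteq P$ for each $n\in N$, and $L$ is prime by Lemma \ref{lduplication2} applied to $L\bowtie I=P$ (and $L\neq M$, since $P\neq M\bowtie I$).

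For the last part of the lemma, $M\bowtie I$-$rad(N\bowtie I)$ is by definition the intersection of all prime submodules of $M\bowtie I$ containing $N\bowtie I$, which by the correspondence just proved equals $\bigcap_{L}(L\bowtie I)$ with $L$ ranging over the prime submodules of $M$ containing $N$; a direct check shows $(m,m')\in\bigcap_{L}(L\bowtie I)$ iff $m\in\bigcap_{L}L=M$-$rad(N)$ and $m-m'\in IM$, that is, $\bigcap_{L}(L\bowtie I)=M$-$rad(N)\bowtie I$. I expect the only real obstacle to be the inclusion $L\bowtie I\subseteq P$: it is precisely here that the hypothesis $N\bowtie I\subseteq P$ is used in an essential way, through the observation $\{0\}\times IM\subseteq N\bowtie I$, which lets $\pi_{1}$ faithfully reconstruct $P$; everything else reduces to Lemma \ref{lduplication2} and bookkeeping with the relation $m-m'\in IM$.
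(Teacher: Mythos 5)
Your proof is correct and follows essentially the same route as the paper: both arguments hinge on the observation that $0\times IM\subseteq N\bowtie I$, so that every prime submodule containing $N\bowtie I$ is of the form $L\bowtie I$, after which Lemma \ref{lduplication2} and intersecting over all such $L$ give the radical formula. The only difference is cosmetic: the paper invokes the correspondence theorem for the quotient $M\bowtie I/(0\times IM)\simeq M$, whereas you verify the correspondence $P=\pi_{1}(P)\bowtie I$ by hand, which is a bit more self-contained.
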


\begin{proof}
	Let $\varphi:M\bowtie I\longrightarrow M;(m,m^{\prime})\longmapsto m$ be an
	$R\bowtie I$-epimorphism with $\ker\varphi=0\times IM.$ Then $M\bowtie
	I/0\times IM\simeq M.$ So, if $L$ is a submodule of $M\bowtie I$ such that
	$0\times IM\subset I,$ then there exists a submodule $K$ of $M$ such that $L/0\times
	IM\simeq K,$ which implies $L=K\bowtie I.$ From Lemma \ref{lduplication2}, we
	conclude that $L$ is a prime submodule of $M\bowtie I$ if and only if $K$ is a
	prime submodule of $M.$ Now, if $L$ is a prime submodule of $M\bowtie I$ such
	that $N\bowtie I\subset L,$ then $0\times IM\subset L,$ and so there exits a
	prime submodule $K$ of $M$ with $L=K\bowtie I.$
	
	Now, it is clear that $\bigcap_i(K_i\bowtie
	I)=\left( \bigcap_iK_i\right) \bowtie I$ where $ K_i $'s are the submodules of $ M $ and
	consequently we have $M\bowtie I$-$rad(N\bowtie I)=M$-$rad(N)\bowtie I.$ 
\end{proof}

\begin{theorem}
	Let $N$ be a submodule of $M$ with $(N:_{R}M)\cap S=\emptyset.$ Then the
	following statements are equivalent:
	
	\begin{enumerate}
		\item $N$ is an $S$-1-absorbing primary submodule of $M.$
		
		\item $N\bowtie I$ is an $S\bowtie I$-1-absorbing primary submodule of $M\bowtie
		I.$
	\end{enumerate}
\end{theorem}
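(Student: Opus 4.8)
The plan is to prove both implications straight from the definition, translating colon modules and radicals via Lemma~\ref{lduplication1} (which gives $(N\bowtie I:_{R\bowtie I}M\bowtie I)=(N:_{R}M)\bowtie I$) and Lemma~\ref{lduplication3} (which gives $M\bowtie I$-$rad(N\bowtie I)=M$-$rad(N)\bowtie I$). The bookkeeping device used throughout is: a triple product $(a,a+i)(b,b+k)(m,m')$ in $M\bowtie I$ has first coordinate $abm$, so it lies in $N\bowtie I$ if and only if $abm\in N$; and once $s\in S$ is fixed, if $sab\in(N:_{R}M)$ then $(s,s)(a,a+i)(b,b+k)=(sab,\,s(a+i)(b+k))$ lies in $(N:_{R}M)\bowtie I$ because $s(a+i)(b+k)-sab=s(ak+bi+ik)\in I$, while if $sm\in M$-$rad(N)$ then $(s,s)(m,m')=(sm,sm')$ lies in $M$-$rad(N)\bowtie I$ because $s(m-m')\in IM$.

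First I would dispatch $(2)\Rightarrow(1)$, which is routine. Let $(s,s+p)$ be an $S\bowtie I$-element of $N\bowtie I$ and take non-units $a,b$ of $R$ and $m\in M$ with $abm\in N$. Since $(r,r)\in U(R\bowtie I)$ exactly when $r\in U(R)$, the elements $(a,a),(b,b)$ are non-units of $R\bowtie I$; moreover $(m,m)\in M\bowtie I$ and $(a,a)(b,b)(m,m)=(abm,abm)\in N\bowtie I$. Hence either $(s,s+p)(a,a)(b,b)=(sab,(s+p)ab)\in(N:_{R}M)\bowtie I$, forcing $sab\in(N:_{R}M)$, or $(s,s+p)(m,m)=(sm,(s+p)m)\in M$-$rad(N)\bowtie I$, forcing $sm\in M$-$rad(N)$. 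So $s$ is an $S$-element of $N$.

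For $(1)\Rightarrow(2)$, fix an $S$-element $s$ of $N$; the claim is that $(s,s)$ is an $S\bowtie I$-element of $N\bowtie I$, the disjointness $(N\bowtie I:_{R\bowtie I}M\bowtie I)\cap(S\bowtie I)=\emptyset$ being immediate from Lemma~\ref{lduplication1} together with $(N:_{R}M)\cap S=\emptyset$. Take non-units $(a,a+i),(b,b+k)$ of $R\bowtie I$ and $(m,m')\in M\bowtie I$ whose product lies in $N\bowtie I$, so $abm\in N$. If $a$ and $b$ are both non-units of $R$, then $S$-1-absorbing primality of $N$ gives $sab\in(N:_{R}M)$ or $sm\in M$-$rad(N)$, and the device closes the case. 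What I expect to be the only genuine obstacle is that $(a,a+i)$ can be a non-unit of $R\bowtie I$ while $a\in U(R)$ (the units of $R\bowtie I$ being precisely the $(a,a+i)$ with both $a,a+i\in U(R)$), so one must also handle the case where $a$ or $b$, say $a$, is a unit of $R$.

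In that case $abm\in N$ gives $bm\in N$. If $b\in U(R)$ too, then $m\in N\subseteq M$-$rad(N)$ and the second alternative holds. If $b$ is a non-unit, assume $sm\notin M$-$rad(N)$ (otherwise we are done). For every non-unit $c\in R$ one has $bcm=c(bm)\in N$ with $b,c$ non-units, so $sbc\in(N:_{R}M)$. If $R$ is not quasi-local, pick two distinct maximal ideals and write $1=x+y$ with $x,y$ non-units; then $sb=sbx+sby\in(N:_{R}M)$, and since $a\in U(R)$ gives $abM=bM$ this yields $sab\in(N:_{R}M)$, so the device applies. If $R$ is quasi-local and $I$ is proper the case $a\in U(R)$ never occurs at all, since then $(a,a+i)$ is a non-unit of $R\bowtie I$ precisely when $a$ is a non-unit of $R$; and the degenerate case $I=R$, where $R\bowtie I\cong R\times R$, can be treated separately. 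Putting the cases together, $(s,s)$ is an $S\bowtie I$-element of $N\bowtie I$.
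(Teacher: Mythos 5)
Your proof follows the same route as the paper's: both directions reduce everything to the first coordinate via Lemma \ref{lduplication1} and Lemma \ref{lduplication3}, and your argument for $(2)\Rightarrow(1)$ coincides with the paper's. The difference lies in $(1)\Rightarrow(2)$, and it is to your credit: the paper disposes of the mixed case, where $(a,a+i)$ is a non-unit of $R\bowtie I$ while $a\in U(R)$, with the sentence ``If $a$ or $b$ is unit, the claim is clear.'' It is not clear. From $abm\in N$ with $a$ a unit one only gets $bm\in N$, and the $S$-$1$-absorbing primary hypothesis says nothing directly about a product of a single non-unit with a module element; what is needed is precisely your case analysis (both units: $m\in N$; one unit with $R$ not quasi-local: write $1=x+y$ with $x,y$ non-units and add $sxb,syb\in(N:_{R}M)$; $R$ quasi-local with $I$ proper: the case cannot arise, since $i\in I\subseteq\mathfrak{m}$ forces $a+i\in U(R)$ and hence $(a,a+i)\in U(R\bowtie I)$). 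That argument is correct and repairs a genuine gap in the paper's own proof.

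The one loose end is the degenerate case $I=R$, which you promise to ``treat separately'' but do not; in fact it cannot be treated, because the equivalence fails there. Take $R=k[[x,y]]$, $M=R$, $N=(x^{2},xy)$, $S=\{1\}$ and $I=R$, so that $R\bowtie I=R\times R$, $N\bowtie I=N\times R$ and $S\bowtie I=\{1\}\times R$. Then $N$ is $1$-absorbing primary, yet $(1,0)(x,1)(y,1)=(xy,0)\in N\times R$ while $(t,t')(1,0)(x,1)=(tx,0)\notin N\times R$ and $(t,t')(y,1)\notin M\bowtie I$-$rad(N\times R)=(x)\times R$ for every $(t,t')\in S\bowtie I$, so $N\bowtie I$ is not $S\bowtie I$-$1$-absorbing primary. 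So the theorem implicitly requires $I$ to be a proper ideal (as does the paper's proof); under that reading your proof is complete and, unlike the paper's, actually closes the unit-coordinate case.
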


\begin{proof}
	From Lemma \ref{lduplication1}, we clearly note that $(N\bowtie
	I:_{R\bowtie I}M\bowtie I)\cap S\bowtie I=\emptyset$ if and only if
	$(N:_{R}M)\cap S=\emptyset.$ Now suppose that $N\bowtie I$ is an $S\bowtie I$-1-absorbing
	primary submodule of $M\bowtie I$. Let $(s,s+i)$ be an $S\bowtie I$-element of
	$N\bowtie I$ and let $abm\in N$ for non-unit elements $a,b\in R$ and $m\in M.$
	Then $(a,a)(b,b)(m,m)\in N\bowtie I.$ Since $(a,a),(b,b)$ are non-units and
	$N\bowtie I$ is an $S\bowtie I$-1-absorbing primary submodule of $M\bowtie I,$ we
	have either
	\[
	(s,s+i)(a,a)(b,b)\in(N\bowtie I:M\bowtie I)
	\]
	or%
	
	\[
	(s,s+i)(m,m)\in M\bowtie I-\text{rad}(N\bowtie I).
	\]
	In the first case, we have $sab\in (N:M)$ by Lemma \ref{lduplication1}. In the second case, $M\bowtie I$-$rad(N\bowtie I)=M$-$rad(N)\bowtie I$ by Lemma \ref{lduplication3} and we conclude that  $(sm,(s+i)m)\in M$-$rad(N)\bowtie I,$  and so $sm\in M$-$rad(N).$ If such an $ N $ does not exists, it is well known that $M$-$rad(N)=M.$ Then $N$ is an $S$-1-absorbing primary submodule of $M.$
	Conversely, assume that $N$ is an $S$-absorbing primary submodule of $M$. Let $s\in S$ be an $S$-element of $N$ and let $(a,a+i)(b,b+i)(m,m')\in N\bowtie I$ for non-unit elements $(a,a+i),(b,b+i)\in R\bowtie I$ and $(m,m')\in M\bowtie I.$ Then $abm\in N.$ If $a$ or $b$ is unit, the claim is clear. Now we can assume $a$ and $b$ are non-unit elements of $R.$ Due to the fact that $N$ is an $S$-1-absorbing primary submodule of $R,$ we get either $sab\in (N:M)$ or $sm\in M$-$rad(N).$ In the first case, we conclude that $(s,s)(a,a+i)(b,b+i)\in(N:M)\bowtie I=(N\bowtie I:M\bowtie I).$ In the second case, we have $(sm,sm'+im')\in M$-$rad(N)\bowtie I,$ since $r(m-m')-im'\in IM.$ Thus, the claim follows by $M$-$rad(N)\bowtie I=M\bowtie I$-$rad(N\bowtie I).$

\end{proof}


\end{document}